\newtheorem{thm}{Theorem}[section]
\newtheorem{cor}[thm]{Corollary}
\newtheorem{lemma}[thm]{Lemma}
\newtheorem{prop}[thm]{Proposition}
\newtheorem{dfn&prop}[thm]{Definition and Proposition}
\newtheorem{dfn&thm}[thm]{Definition and Theorem}
\theoremstyle{definition}
\newtheorem{defn}[thm]{Definition}
\newtheorem{observation}[thm]{Observation}
\newtheorem*{structure}{Structure of the article and outline of the proof}
\newtheorem{discussion}[thm]{}
\theoremstyle{remark}
\newtheorem*{Claim}{Claim}
\newtheorem*{remark}{Remark}
\numberwithin{equation}{section}
\newenvironment{subproof}{\begin{proof}[Proof of claim.]}{%
	\end{proof}}
\newcommand{\Deriv}{{\rm D}}
\def\phi{\varphi}
\def\M{{\mathcal{M}}}
\def\B{{\mathcal{B}}}
\def\CB{{\mathcal{CB}}}
\def\C{{\mathbb{C}}}
\def\D{{\mathbb{D}}}
\def\N{{\mathbb{N}}}
\def\R{{\mathbb{R}}}
\def\Q{\mathbb {Q}}
\def\Or{{\mathcal{O}}}
\def\Ort{{\widetilde{\mathcal{O}}}}
\newcommand{\unbdd}[1]{\accentset{\infty}{#1}}
\newcommand{\Crit}{\operatorname{Crit}}
\newcommand{\Irr}{\operatorname{Irr}}
\newcommand{\addr}{\operatorname{addr}}
\newcommand{\Addr}{\operatorname{Addr}}
\newcommand{\ul}{\underline}
\newcommand{\ultau}{\underline{\tau}}
\newcommand{\T}{\mathcal{T}}
\newcommand{\V}{\mathcal{V}}
\newcommand{\AV}{\operatorname{AV}}
\newcommand{\CV}{\operatorname{CV}}
\newcommand{\Orb}{\operatorname{Orb}}
\def\s{{\underline s}}
\newcommand*{\defeq}{\mathrel{\vcenter{\baselineskip0.5ex \lineskiplimit0pt
			\hbox{\scriptsize.}\hbox{\scriptsize.}}}%
	=}
\newcommand{\eqdef}{=\mathrel{\vcenter{\baselineskip0.5ex \lineskiplimit0pt
			\hbox{\scriptsize.}\hbox{\scriptsize.}}}}
\title{Splitting hairs with transcendental entire functions}
\author[L. Pardo-Sim\'{o}n]{Leticia Pardo-Sim\'{o}n}
\address{Department of Mathematics \\ The University of Manchester \\ Manchester \\ M13 9PL \\ United Kingdom \\ 
	 \textsc{\newline \indent 
	   \href{https://orcid.org/0000-0003-4039-5556%
	     }{\includegraphics[width=1em,height=1em]{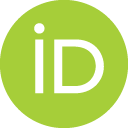} {\normalfont https://orcid.org/0000-0003-4039-5556}}
	       }}
\email{leticia.pardosimon@manchester.ac.uk}
\subjclass[2010]{Primary 37F10; secondary 30D05.}
\begin{document}

\newcommand{\new}[1]{\textcolor{magenta}{#1}}
\begin{abstract}
In recent years, there has been significant progress in the understanding of the dynamics of transcendental entire functions with bounded \textit{postsingular set}. In particular, for certain classes of such functions, a complete description of their topological dynamics in terms of a simpler model has been given inspired by methods from polynomial dynamics. In this paper, and for the first time, we give analogous results in cases when the postsingular set is unbounded. More specifically, we show that if $f$ is of finite order, has \textit{bounded criticality} on its Julia set $J(f)$, and its singular set consists of finitely many critical values that escape to infinity and satisfy a certain separation condition, then $J(f)$ is a collection of \textit{dynamic rays} or \emph{hairs}, that \emph{split} at critical points, together with their corresponding landing points. In fact, our result holds for a much larger class of functions with bounded singular set. Moreover, this result is a consequence of a significantly more general one: we provide a \textit{topological model} for the action of $f$ on its Julia set. 
\end{abstract}

\maketitle

\section{Introduction}
For a polynomial $p$ of degree $d \geq 2$, Böttcher's Theorem provides a conjugacy between $p$ and the simpler map $z \mapsto z^{d}$ in a neighbourhood of infinity. Whenever all the orbits of the critical points of $p$ are bounded (or equivalently when its Julia set $J(p)$ is connected), this conjugacy can be extended to a biholomorphic map between $\C \setminus \overline{\D}$ and the basin of infinity of $p$. In particular, it allows us to define \textit{dynamic rays} for $p$ as the curves that arise as preimages of radial rays from $\partial \D$ to $\infty$ under this conjugacy, and provide a natural foliation of the set of points of $p$ that escape to infinity under iteration. Whenever $J(p)$ is locally connected, each ray has a unique accumulation point in $J(p)$, and we say that the ray \textit{lands}. This limiting behaviour of dynamic rays has been used with great success to provide a combinatorial description of the dynamics of $p$ in $J(p)$. For example, in this situation, Douady \cite{douady_pinchedmodel} constructed a \textit{topological model} for $J(p)$ as a ``pinched disc'', that is, as the quotient of $\partial\D$ by a natural equivalence relation.

Since for a transcendental entire map, $f$, infinity is an essential singularity, Böttcher's Theorem no longer applies. Nevertheless, analogues of dynamic rays often exist, and then, it is natural to ask about their landing behaviour, and, more generally, the existence of topological models for the dynamics of $f$ on $J(f)$. Answers to these questions depend largely on its \emph{singular set} $S(f)$, that is, the closure of the set of its critical and asymptotic values, as well as on its \emph{postsingular set} $P(f) \defeq\overline{\bigcup_{n\geq 0}f^n (S(f))}$. In fact, in this paper, we restrict ourselves to the  widely studied \emph{Eremenko-Lyubich class}~$\mathcal{B}$, consisting of all transcendental entire functions with bounded singular set, \cite{eremenkoclassB}. Then, it is known that if $f$ is a finite composition of functions in $\B$ of \textit{finite order}, i.e., so that $\log \log \vert f_i(z)\vert=O(\log \vert z \vert)$ as $\vert z \vert \rightarrow \infty$, then every point in its \textit{escaping set}
\[I(f)\defeq \{z\in\C : f^n(z)\to \infty \text{ as } n\to \infty\}\]
can be connected to infinity by an escaping curve, subsequently called \textit{dynamic ray} by analogy with the polynomial case, \cite{Baranski_Trees, RRRS}. More precisely, we adopt \cite[Definition 2.2]{RRRS} and \cite[Definition~1.2]{lasse_dreadlocks}:
\begin{defn}[Dynamic rays, criniferous maps]\label{def_ray}
Let $f$ be a transcendental entire function. A \emph{ray tail} of $f$ is an injective curve $\gamma :[t_0,\infty)\rightarrow I(f)$, with $t_0>0$, such that
\begin{itemize}
\item for each $n\geq 1$, $t \mapsto f^{n}(\gamma(t))$ is injective with $\lim_{t \rightarrow \infty} f^{n}(\gamma(t))=\infty$;
\item $f^{n}(\gamma(t))\rightarrow \infty$ uniformly in $t$ as $n\rightarrow \infty$.
\end{itemize}
A \emph{dynamic ray} of $f$ is a maximal injective curve $\gamma :(0,\infty)\rightarrow I(f)$ such that the restriction $\gamma_{|[t,\infty)}$ is a ray tail for all $t > 0$. We say that $\gamma$ \emph{lands} at $z$ if $\lim_{t \rightarrow 0^+} \gamma(t)=z$, and we call $z$ the \emph{endpoint} of $\gamma$. Moreover, we say that $f$ is \emph{criniferous} if for every $z\in I(f)$, there is $N\defeq N(z)\in \N$ so that $f^n(z)$ is in a ray tail for all $n\geq N$.
\end{defn}
We remark that the accumulation set of a dynamic ray might be topologically rather complicated, and, in particular, need not be a point. Indeed, this behaviour occurs for any map in the exponential family $E_{\kappa}\colon z\mapsto e^z+\kappa$ whose singular value escapes; \cite{lasse_nonlanding}. On the contrary, for $f$ a postsingularly bounded entire function, $P(f)$ is nicely separated from infinity, where rays start. This has played a crucial role when proving that all dynamic rays of certain $f\in \B$ with bounded postsingular set land; see the seminal work of Devaney and Krych on postsingularly bounded exponentials \cite{devaney_Krych}, as well as \cite{devaney_tangerman,dierkParadox,lasseRigidity,helenaSemi,mashael}.



The dynamics of entire functions with unbounded postsingular set are far less understood. For polynomials with escaping singular values, it is still possible to extend dynamic rays when they hit critical points using Green's function in a natural way, \cite{Goldberg_Milnor,kiwi_rationalrays}. However, with the essential singularity at infinity, we encounter very different dynamics for a transcendental map $f$, and, a priori, it is not so clear what to expect in the presence of unbounded singular orbits, with the simplest case to consider being escaping orbits. For example, the presence of indecomposable continua in the closure of dynamic rays of exponential maps whose asymptotic value escapes, prevents the existence of a complete description of their Julia sets in terms of dynamic rays that land; see \cite{Devaney_Knasterlike, Devaney_Jarque_ind1} and \cite[Theorem 1.2]{lasse_nonlanding}.

In the following theorem, and for the first time, we give some answers to this question. Recall that an entire function $f$ has \textit{bounded criticality} on its Julia set if $J(f)$ contains no finite asymptotic value of $f$, and the local degree of $f$ at the points in $J(f)$ is uniformly bounded.
\begin{thm}[Landing of rays for functions with escaping singular orbits] \label{thm_1intro} Let $f$ be a finite composition of class $\B$ functions of finite order. Suppose that $S(f)$ is a finite collection of critical values that escape to infinity, $f$ has bounded criticality on $J(f)$, and there exists $\epsilon>0$ so that $\vert w-z\vert \geq \epsilon\max\{\vert z \vert, \vert w \vert\}$ for all distinct $z,w \in P(f)$. Then, every dynamic ray of $f$ lands, and every point in $J(f)$ is either on a dynamic ray or it is the landing point of at least one such ray.
\end{thm}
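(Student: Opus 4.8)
The plan is to deduce Theorem~\ref{thm_1intro} by combining the known ray structure for finite-order maps in~$\B$ with a uniform hyperbolic-expansion estimate furnished by the separation hypothesis, and then ruling out points of $J(f)$ that are neither on a dynamic ray nor the landing point of one. First I would set up the combinatorial frame: since $f$ is a finite composition of finite-order functions in~$\B$, the logarithmic-coordinate machinery of \cite{Baranski_Trees, RRRS} applies, so near infinity $f$ lifts to an expanding map on a disjoint union of tracts obeying a linear head-start condition, and in particular $f$ is criniferous. Hence, up to finitely many initial iterates, every escaping point lies on a ray tail, and these are indexed by an \emph{external address} recording the tracts visited. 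I would then refine this to account for critical points: since $f$ has no asymptotic values and only finitely many critical values, $\Crit(f)$ maps into a finite set of escaping critical values, and $f\colon\C\setminus f^{-1}(P(f))\to\C\setminus P(f)$ is a holomorphic covering map; a single arc pulls back through a critical point of local degree $k$ to $k$ arcs meeting at that point, with $k$ uniformly bounded by the bounded-criticality hypothesis. Iterating yields the ``split hairs'' of the statement, whose address must be enriched to record the branch chosen at each preimage of a critical point.

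The technical heart --- and, I expect, the main obstacle --- is to convert the separation condition into a uniform expansion estimate, thereby handling the unbounded postsingular set. Equip $\C\setminus P(f)$ with its hyperbolic metric $\rho$. As $f^{-1}(P(f))$ is strictly larger than $P(f)$, the covering map above expands $\rho$, but not uniformly, because $\rho$ degenerates along the unbounded postsingular orbit. The inequality $|w-z|\ge\epsilon\max\{|z|,|w|\}$ for distinct $z,w\in P(f)$ is precisely what controls this: it forces the round discs $D(p,\tfrac{\epsilon}{2}|p|)$, $p\in P(f)$, to be pairwise disjoint, so outside them $\rho$ is comparable to a standard cylindrical metric and $f$ is an unbranched covering with bounded distortion, while inside the discs $\rho$ only grows. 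Together with the finite-order growth estimates this should yield a uniform lower bound $\|Df\|_\rho\ge\lambda>1$ wherever it is defined (the blow-up of $\rho$ near $P(f)$ only helping), and one must additionally verify that the orbits of the relevant arcs do not linger near $P(f)$ for too long; making these estimates precise, uniformly over all of $J(f)$, is where the real work lies.

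Granting the uniform expansion, landing becomes routine. For a dynamic ray $\gamma$, the accumulation set $A\defeq\bigcap_{t>0}\overline{\gamma((0,t])}$ is a compact connected subset of $J(f)$ contained in the closure of a single split hair, hence determined by one enriched external address. For small $t$, $\overline{\gamma((0,t])}$ lies in a branch of $f^{-n}$ of a $\rho$-bounded piece of the $n$-th shifted ray, so it has $\rho$-diameter at most $\lambda^{-n}$ times a constant; letting $n\to\infty$ gives $\diam_\rho(A)=0$, so $A$ is a single point and $\gamma$ lands.

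For the last assertion, recall that $J(f)=\overline{I(f)}$ for $f\in\B$, so every $z\in J(f)$ is a limit of escaping points, each on a ray tail with some enriched address; the uniform shrinking shows that the set of points sharing a given address prefix has arbitrarily small diameter, and a compactness/diagonalisation argument then exhibits, through $z$, either a ray tail or a ray landing at $z$. I would ultimately package this analysis into a single statement --- an explicit topological model for $f|_{J(f)}$ --- from which Theorem~\ref{thm_1intro} is immediate; the split-hair model also makes transparent why (preimages of) critical points are exactly the loci where hairs split.
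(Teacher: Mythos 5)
Your overall architecture (ray structure from the finite-order/logarithmic machinery, enriched addresses recording branch choices at critical preimages, a uniform expansion estimate driving a shrinking argument, and finally packaging everything into a topological model) parallels the paper's strategy, which deduces Theorem \ref{thm_1intro} from the semiconjugacy onto the model space. But the analytic heart of your plan has a genuine gap: you propose to work with the hyperbolic metric $\rho$ of $\C\setminus P(f)$. Here $P(f)$ is an unbounded discrete set of \emph{escaping} points that lies in $J(f)$ and, crucially, on the dynamic rays themselves (for $\cosh$, every postsingular point is an interior point of the ray $[0,\infty)$, and the extended tail $[0,\pm i\pi/2]$ maps onto $[0,1]$, which terminates at the critical value $1\in P(f)$). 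Since the hyperbolic distance to a puncture is infinite, the very curves your shrinking argument must pull back --- pieces of images of rays that contain or end at postsingular points --- have infinite $\rho$-length, so the estimate ``$\rho$-diameter $\lesssim \lambda^{-n}$'' is vacuous exactly where it is needed; your remark that the blow-up of $\rho$ near $P(f)$ ``only helps'' is true for the expansion factor but fatal for the length bounds on the other side of the inequality. Moreover, the claimed uniform bound $\Vert \Deriv f\Vert_\rho\geq\lambda>1$ on all of $J(f)$ (which equals $\C$ here) is asserted rather than proved, and it is not a routine consequence of the separation condition for the punctured-plane metric: the expansion factor of a covering degenerates to $1$ at points far, in the metric of $\C\setminus P(f)$, from $f^{-1}(P(f))\setminus P(f)$, and controlling this is precisely the content of the companion work the paper relies on.

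The paper's fix is to replace punctures by an \emph{orbifold} metric: strongly postcritically separated maps admit hyperbolic orbifolds $\Ort,\Or$ with cone points (finite distance) at postsingular Julia points, for which $f\colon\Ort\to\Or$ is an orbifold covering with uniform expansion $\Lambda>1$ (Theorem \ref{thm_orbifolds}), so postsingular points on rays cause no divergence. Even then, a second ingredient you do not address is needed: the ray pieces being pulled back need not be rectifiable at all, so the paper does not bound their length directly but replaces them by representatives of uniformly bounded orbifold length in their post-$0$-homotopy class (Theorem \ref{cor_homot2}) and pulls those back via the post-homotopy lifting property (Proposition \ref{cor_homot}) before applying the contraction estimate (Corollary \ref{cor_uniform}). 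Finally, your closing ``compactness/diagonalisation'' step for points of $J(f)\setminus I(f)$ is only a sketch; the paper obtains it from surjectivity of the limit map $\phi$, proved via the one-point compactification of the model space together with $J(f)=\overline{I(f)}$. As written, your proposal would not get past the metric estimates, and in particular could not handle the motivating example $f=\cosh$.
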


The hypotheses in Theorem~\ref{thm_1intro} will be discussed later, but first we note that for any $f$ satisfying them, $J(f)=\C$. Moreover, since $I(f)$ contains critical values, dynamic rays \textit{split} at critical points. This can be illustrated with $f(z)=\cosh(z)$. In this case, $S(f)=\CV(f)=\{-1, 1\}$, and $P(f)$ equals $S(f)$ together with the orbit of $f(-1)=f(1)$, which consists of a sequence of positive real points converging to infinity at an exponential~rate. 
\begin{figure}[htb]
	\centering
	\resizebox{0.6\textwidth}{!}{\begingroup%
  \makeatletter%
  \providecommand\color[2][]{%
    \errmessage{(Inkscape) Color is used for the text in Inkscape, but the package 'color.sty' is not loaded}%
    \renewcommand\color[2][]{}%
  }%
  \providecommand\transparent[1]{%
    \errmessage{(Inkscape) Transparency is used (non-zero) for the text in Inkscape, but the package 'transparent.sty' is not loaded}%
    \renewcommand\transparent[1]{}%
  }%
  \providecommand\rotatebox[2]{#2}%
  \ifx\svgwidth\undefined%
    \setlength{\unitlength}{368.50393066bp}%
    \ifx\svgscale\undefined%
      \relax%
    \else%
      \setlength{\unitlength}{\unitlength * \real{\svgscale}}%
    \fi%
  \else%
    \setlength{\unitlength}{\svgwidth}%
  \fi%
  \global\let\svgwidth\undefined%
  \global\let\svgscale\undefined%
  \makeatother%
  \begin{picture}(1,0.69230772)%
    \put(0,0){\includegraphics[width=\unitlength]{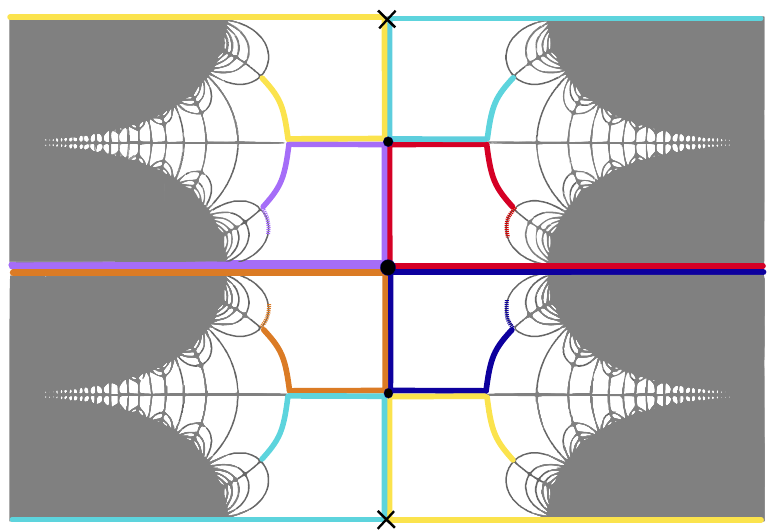}}%
    \put(0.51311181,0.21003289){\color[rgb]{0,0,0}\makebox(0,0)[lb]{\smash{$\fontsize{9pt}{1em} -i\frac{\pi}{2}$}}}%
    \put(0.51591789,0.35817888){\color[rgb]{0,0,0}\makebox(0,0)[lb]{\smash{$0$}}}%
    \put(0.51520385,0.52821125){\color[rgb]{0,0,0}\makebox(0,0)[lb]{\smash{$\fontsize{9pt}{1em} i\frac{\pi}{2}$}}}%
  \end{picture}%
\endgroup}
	\caption[Caption for LOF]{In colour, some ray tails of $f(z)=\cosh(z)$. Both the red and dark-blue ones are mapped into themselves, and the rest of coloured tails are first and second preimages of these tails. Further iterated preimages, some depicted in grey, lead to further extensions of these ray tails.\protect\footnotemark}
	\label{fig:rays_cosh}
\end{figure}
\footnotetext{Original picture by L. Rempe, modified for this paper. It first appeared in \cite[p.~163]{mio_thesis}.}

\noindent By \cite[Theorem 6.4]{dierkCosine}, $f$ is criniferous. Note that $0$ is a critical point, and it is easy to check that $(-\infty, 0]$ and $[0, \infty)$ are both ray tails. The vertical segments $[0, -i\pi/2]$ and $[0, i\pi/2]$ are mapped univalently to $[0,1]\subset \R^+$, and thus, the union of each segment with either one of the ray tails $(-\infty, 0]$ and $[0, \infty)$, forms a different ray tail. We can think of this structure as four ray tails that partially overlap pairwise. 
Their endpoints $-i\pi/2$ and $i\pi/2$ are preimages of $0$, and so the structure described has a preimage attached to each of them, see Figure \ref{fig:rays_cosh}. This leads again to two possible extensions of each ray tail. We show in \cite{mio_signed_addr} that for criniferous maps with escaping critical values, such extensions can be made in a systematic and dynamically meaningful way that leads to a foliation of their escaping sets into piecewise-overlapping rays. The additional assumptions in Theorem~\ref{thm_1intro} guarantee that such rays always land; see \cite{mio_cosine} for more details on the dynamics of~$\cosh$.

Theorem~\ref{thm_1intro} is a consequence of a more general result: in analogy to Douady's ``Pinched Disc Model'' for polynomials, we construct a topological model for the dynamics on the Julia set of any $f$ satisfying its hypotheses. So far, all existing models for transcendental entire maps regarded functions in $\B$ with bounded postsingular set. The seminal work in this direction is \cite{AartsOversteegen}, where it is shown that the Julia set of certain exponential and sine maps is homeomorphic to a topological object known as \textit{straight brush} (see Definition \ref{def_brush}). When this occurs, the Julia set is said to be a \textit{Cantor bouquet}; compare with \cite{Bhattacharjee_exp, lasseTopExp} for other parameters in the exponential family. Subsequently, it was shown in \cite{lasseBrushing} that if $f$ is a finite composition of finite order maps in $\B$ and of \textit{disjoint type}, i.e., with connected Fatou set $F(f)$ and such that $P(f) \Subset F(f)$, then $J(f)$ is a Cantor bouquet.

Understanding the dynamics of disjoint type functions is particularly useful, since if $f\in \B$, then for $\lambda \in \C\setminus \{0\}$ with $\vert \lambda \vert$ small enough, the function $\lambda f$ is of disjoint type. In particular, $\lambda f$ is in the \textit{parameter space} of $f$, that is, $f$ and $\lambda f$ are quasiconformally equivalent. This implies that their dynamics near infinity are related by a certain analogue of Böttcher's Theorem for transcendental maps; \cite{lasseRigidity}. One might regard disjoint type functions as having the simplest dynamics among those in the parameter space of $f$, and, in particular, they play an analogous role for $f$ as $z\mapsto z^d$ does for a polynomial of degree~$d$. If, in addition, $f$ is of finite order, so is any disjoint type map $g\defeq \lambda f$, and by the result in \cite{lasseBrushing} mentioned earlier, $J(g)$ is a Cantor bouquet. This is used in \cite[Theorem~5.2]{lasseRigidity} to show that when $f$ is hyperbolic, $g\vert_{I(g)}$ and $f\vert_{I(f)}$ are conjugate, and that $f$ and $g$ are semiconjugate on their Julia sets, a result extended in \cite{helenaSemi} to \textit{strongly subhyperbolic} maps. See also \cite{mashael} for a further generalization.

The function $f(z) = \cosh(z)$ illustrates the fact that functions satisfying the hypotheses of Theorem~\ref{thm_1intro} may have critical values in their dynamic rays. It follows that any model that describes their dynamics must reflect the splitting of ray tails at critical points. In particular, since, roughly speaking, a Cantor bouquet is a collection of disjoint curves, called \textit{hairs}, $J(f)$ can no longer be a Cantor bouquet, and so for $g\defeq \lambda f$ of disjoint type, $g\vert_{I(g)}$ and $f\vert_{I(f)}$ can no longer be conjugate. However, the analysis performed on the map $\cosh$, where each ray tail ``splits into two'' at critical points, suggests considering two copies of each hair of $J(g)$, and mapping each copy to one of the two possible extensions. With that aim, we build a topological model for $f\vert_{J(f)}$ the following way: we consider the set $J(g)_\pm \defeq J(g)\times \{-,+\}$, that is endowed with a topology that preserves the order of rays at infinity, see \S\ref{sec_model}, and call it a \textit{model space} for $f$. Then, we define an \textit{associated model function} $\tilde{g} \colon J(g)_\pm \rightarrow J(g)_\pm$ to act as $g$ on the first coordinate, and as the identity on the second. We let $I(g)_{\pm}\defeq I(g)\times \{ -,+\}$ and show the following:
\begin{thm}[Semiconjugacy to model space]\label{thm_main_intro1}Let $f$ be as in Theorem~\ref{thm_1intro}, let $J(g)_\pm$ be a model space for $f$, and let $\tilde{g}$ be its associated model function. Then, there exists a continuous surjective function $\phi:J(g)_{\pm} \rightarrow J(f)$ so that $f\circ\phi = \phi\circ \tilde{g}$. Moreover, $\phi(I(g)_{\pm})=I(f)$.
\end{thm}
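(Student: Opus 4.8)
The plan is to build $\phi$ on the escaping set first, using the quasiconformal-equivalence machinery between $f$ and its disjoint-type relative $g=\lambda f$, and then extend to all of $J(f)$ by continuity. Since $g$ is disjoint type, of finite order, and a finite composition of finite-order maps, its Julia set $J(g)$ is a Cantor bouquet by \cite{lasseBrushing}, and its escaping set $I(g)$ is a union of hairs parametrized in the standard way. The first step is to recall from \cite{lasseRidigity} the conjugacy near infinity: because $f$ and $g$ are quasiconformally equivalent, there is a homeomorphism between suitable "tracts at infinity" intertwining $f$ and $g$, and one pushes this forward along dynamic rays. Concretely, for $z \in I(g)$ with address/itinerary data, the ray tail through $f^n(z)$-images is eventually well defined; the difficulty that $f$ has escaping \emph{critical} values — so rays of $f$ split — is exactly what the doubling $J(g)_\pm = J(g)\times\{-,+\}$ absorbs: the two copies of a hair of $g$ are sent to the two extensions of the corresponding ray tail of $f$ across a (preimage of a) critical point. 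This is the construction from \cite{mio_signed_addr}, whose "signed addresses" should supply a combinatorially consistent choice of extension; I would cite that to define $\phi$ on $I(g)_\pm$ so that $f\circ\phi=\phi\circ\tilde g$ holds there and $\phi(I(g)_\pm)=I(f)$, using that $f$ is criniferous so that $I(f)$ is genuinely covered by ray tails.

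The second step is continuity of $\phi$ on $I(g)_\pm$ and the extension to $J(g)_\pm$. The topology on $J(g)_\pm$ is designed to "preserve the order of rays at infinity," so a neighbourhood basis at a point of a $+$-copy (resp. $-$-copy) of a hair consists of one-sided sectors of nearby hairs; I would check that $\phi$ respects this by transporting the order structure through the near-infinity conjugacy, which is orientation-preserving on the relevant tracts. For points of $J(g)_\pm$ not in $I(g)_\pm$ — i.e. the non-escaping part — I would use that every such point is a limit of escaping points along its own hair (hairs in a Cantor bouquet have their "base" accessible from within) together with the finite-order/uniform-escape estimates from the criniferous setup to show Cauchy-ness of $\phi$ along approaching sequences, hence a unique continuous extension. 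Surjectivity onto $J(f)$ then follows from Theorem~\ref{thm_1intro}: every point of $J(f)$ is on a dynamic ray or is a landing point of one, and both ray points and landing points are hit — the former directly, the latter as $\lim_{t\to 0^+}$ of the ray, which matches $\lim$ of $\phi$ along the corresponding hair copy.

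The third step is to verify the functional equation $f\circ\phi=\phi\circ\tilde g$ on all of $J(g)_\pm$, which is immediate by continuity once it holds on the dense subset $I(g)_\pm$ and once one knows $\tilde g$ and $f$ are continuous (the former by construction, acting as $g$ on the first coordinate and identity on the second). One subtlety: $\tilde g$ moving only the first coordinate means $\phi$ must send both signs of a hair to ray tails whose \emph{images} are the two extensions at the image critical point consistently — this compatibility is precisely the content of the signed-address formalism and must be invoked, not re-proved.

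The main obstacle I expect is the continuity of $\phi$ at the doubled hairs and at non-escaping points simultaneously: one must show that approaching a $+$-copy of a hair $H$ of $g$ "from the $+$ side" in the model topology maps to approaching the corresponding extension of the ray tail in $J(f)$, and that the two copies of $H$ do \emph{not} get glued by $\phi$ unless the dynamics of $f$ forces it (e.g. at a fixed point of the sign, or where the ray does not actually split). Handling this requires the fine geometric control on hairs — uniform bounds on their "width" and on the separation of postsingular points given by the hypothesis $|w-z|\ge \epsilon\max\{|z|,|w|\}$ — to guarantee that nearby hairs (with the correct one-sided order) map to nearby ray tails, so that the order-preserving model topology is exactly the quotient topology making $\phi$ continuous. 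I would isolate this as the key lemma, proving it via the expansion estimates available for finite-order functions in $\mathcal B$ and the logarithmic-change-of-variable normal form underlying \cite{RRRS, Baranski_Trees}.
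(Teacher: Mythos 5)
Your overall architecture (define $\phi$ on the escaping part via the near-infinity correspondence with $g$ and signed addresses, then extend and check the functional equation) points in the right direction, but two steps as proposed do not work. First, surjectivity: you invoke Theorem \ref{thm_1intro} (``every point of $J(f)$ is on a dynamic ray or is a landing point of one'') to see that landing points are hit. In this paper that statement is itself a \emph{consequence} of the semiconjugacy: Theorem \ref{thm_1intro} is deduced from Corollary \ref{cor_intro}, which is deduced from Theorem \ref{thm_main}, where landing of the canonical rays is obtained precisely because $\phi\colon J_{(\s,\ast)}\to\overline{\Gamma(\s,\ast)}$ is shown to be a bijection. Since no independent proof of ray landing is available when $P(f)$ is unbounded, your argument is circular. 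The paper avoids this by a purely topological route: $\phi$ extends continuously to the one-point compactification of $J(g)_\pm$ (Lemma \ref{compactification} together with the estimate $d_\Or(\phi(x),\pi(x))\le 2\mu\Lambda/(\Lambda-1)$), so $\phi(J(g)_\pm)$ is closed in $J(f)$; combined with $\phi(I(g)_\pm)=I(f)$ and $J(f)=\overline{I(f)}$ this gives surjectivity without knowing beforehand that rays land.

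Second, the continuity/extension mechanism is missing its key ingredient. You propose to control the passage to non-escaping points and the ``non-gluing'' of the two copies of a hair via the logarithmic change of variable and the expansion estimates underlying \cite{RRRS,Baranski_Trees}. That expansion is available only where $f$ is a covering over the complement of a disc containing $S(f)$; here the pullbacks of ray pieces necessarily pass through critical points lying in $J(f)$ (this is exactly where rays split) and through an unbounded postsingular set, which is the situation in which the standard class-$\B$ hyperbolic-metric argument breaks down -- indeed this is the central difficulty the paper is addressing. The actual mechanism is the orbifold expansion for strongly postcritically separated maps (Theorem \ref{thm_orbifolds} and Corollary \ref{cor_uniform}), combined with the post-homotopy lifting property (Proposition \ref{cor_homot}) and the uniformly bounded-length representatives of possibly non-rectifiable ray pieces (Theorem \ref{cor_homot2}); these yield that the approximations $\phi_n=f^{-n}_{(\s,\ast)}\circ\phi_0\circ\tilde g^n$ form a uniformly Cauchy sequence (Lemma \ref{lem_UHSC}), so the limit is continuous on all of $J(g)_\pm$ at once -- including at endpoints and transversally across hairs -- rather than being extended hair by hair. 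Without a substitute for this uniform expansion through critical and postsingular points, your ``key lemma'' cannot be proved by the tools you cite. Note finally that the paper's proof of the stated theorem is in fact a reduction: $f$ is in $\CB$ (finite compositions of finite-order maps lie in $\CB$) and is strongly postcritically separated, so the general Theorem \ref{thm_main_intro} applies; reproving it from scratch is legitimate, but then the two gaps above must be filled.
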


In fact, we are able to prove significantly stronger results than those of Theorems \ref{thm_1intro} and \ref{thm_main_intro1}. However, they require some rather technical definitions, which is why we started with the more straightforward results of Theorems \ref{thm_1intro} and \ref{thm_main_intro1}. Firstly, we note that the conditions we imposed on $P(f)$ ensure that $f$ expands an \textit{orbifold metric} that sits on a neighbourhood of $J(f)$. We prove this result in \cite{mio_orbifolds} for a much larger class of maps. More precisely, we say that $f\in \B$ is \textit{strongly postcritically separated} if $P(f)\cap F(f)$ is compact, $f$ has bounded criticality on $J(f)$, there is a uniform bound on the number of critical points in the orbit of any $z\in J(f)$, and there is $\epsilon>0$ so that for any distinct $z,w\in P_J$, $\vert z-w\vert\geq \epsilon \max\{\vert z \vert, \vert w \vert\}$. In particular, in addition to escaping critical values, strongly postcritically separated maps might contain preperiodic postsingular points in their Julia set; see \S \ref{sec_postsep}. 

Secondly, as mentioned earlier, the assumption on $f$ being a finite composition of functions of finite order in $\B$ guarantees that $f$ is criniferous, and implies that $J(\lambda f)$ is a Cantor bouquet whenever $\lambda f$ is of disjoint type. In \cite{mio_thesis}, we introduce the more general \textit{class~$\CB$}, consisting of all $f \in \B$ so that $J(\lambda f)$ is a Cantor bouquet for $\vert \lambda \vert$ sufficiently small. As an extension of \cite[Theorem 1.2]{RRRS}, it is shown in \cite{mio_newCB} that all maps in $\CB$ are criniferous, and so this is a natural class to consider when studying functions with dynamic rays. Since for $f\in \CB$, any disjoint type $\lambda f$ has a Cantor bouquet Julia set, we can define a topological model for $f\vert_{J(f)}$ as we did in Theorem \ref{thm_main_intro1}.

We are now able to state the main result of this paper, from which our other results follow quickly. This concerns all functions in $\CB$ that are also strongly postcritically separated.
\begin{thm}[Semiconjugacy to model space]\label{thm_main_intro}Let $f \in \mathcal{CB}$ be strongly postcritically separated, let $J(g)_\pm$ be a model space for $f$ and let $\tilde{g}$ be its associated model function. Then, there exists a continuous surjective function
	\begin{equation}\label{eq_phi_intro}
	\phi:J(g)_{\pm} \rightarrow J(f)
	\end{equation}
	so that $f\circ\phi = \phi\circ \tilde{g}$. Moreover, $\phi(I(g)_{\pm})=I(f)$.
\end{thm}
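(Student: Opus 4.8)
The plan is to realise $\phi$ as the canonical ``change of indexing'' between the dynamic rays of $g\defeq\lambda f$ and the piecewise‑overlapping rays of $f$: build it first near infinity from the parameter‑space conjugacy, then propagate it inward by pulling back along $f$, letting the two sheets $\{-,+\}$ absorb exactly the ambiguity introduced by preimages of critical points. Fix $\lambda\in\C$ with $\vert\lambda\vert$ so small that $g$ is of disjoint type; then $J(g)$ is a Cantor bouquet (since $f\in\CB$), $f$ is criniferous (by \cite{mio_newCB}), and, because $f$ is strongly postcritically separated, $f$ expands a conformal orbifold metric $\rho$ on a neighbourhood of $J(f)$ (by \cite{mio_orbifolds}). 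These are the only analytic inputs; the rest is combinatorial bookkeeping together with contraction estimates for~$\rho$.

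\emph{Step 1 (rays at infinity and the base of $\phi$).} Parametrise $J(g)$ by its hairs, indexed by the admissible external addresses $\s$ with a potential coordinate $t$, so that $g$ acts as the shift on $\s$ and increases $t$. For each admissible $\s$ the map $f$ has a ray tail ``at address $\s$'' above some potential $T(\s)$, and, since $f$ and $g$ are quasiconformally equivalent, rigidity at infinity \cite{lasseRidigity} (with the uniform structure of logarithmic tracts) furnishes a conjugacy $\Theta$ between the high‑potential parts of $g$ and of $f$ respecting addresses and the circular order of tracts. Define $\phi$ there by $\phi(\s,t,\epsilon)\defeq\Theta(\s,t)$, independently of $\epsilon$, which is forced since no splitting occurs at high potential. \emph{Step 2 (pulling back).} Given $(x,\epsilon)\in I(g)_\pm$, pick $n$ with $g^{n}(x)$ in the high‑potential region; then $\phi(\tilde g^{n}(x,\epsilon))=\phi(g^{n}(x),\epsilon)$ is already defined, and we set $\phi(x,\epsilon)$ to be the $f^{n}$‑preimage of this point selected as follows: the address of $x$ prescribes the logarithmic‑tract itinerary of $f$, pinning the branch uniquely away from $P(f)$, and whenever the ray meets a critical value the sign $\epsilon$ together with the signed‑address formalism of \cite{mio_signed_addr} selects one of the finitely many local branches. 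One checks this is independent of $n$ and satisfies $f\circ\phi=\phi\circ\tilde g$ on $I(g)_\pm$ by construction; moreover $\phi(I(g)_\pm)\subseteq I(f)$ because ray tails map into ray tails, and the reverse inclusion follows from criniferousness of $f$ by lifting a ray tail backwards through $\tilde g$.

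\emph{Step 3 (continuity and extension to $J(g)_\pm$).} This is the crux. Using that $f$ expands $\rho$, the $n$‑th preimages chosen in Step~2 contract: model points sharing a long initial segment of address and sign data have $\phi$‑images that stay $\rho$‑close, and as $t\to 0^{+}$ the selected ray segments are $\rho$‑Cauchy, so $\phi$ extends continuously to the hair endpoints — this is precisely where strong postcritical separation is used, and where the landing asserted in Theorem~\ref{thm_1intro} originates. The topology on $J(g)_\pm$ records exactly the address, the circular order of rays at infinity, and the sign; hence convergence of model points forces convergence of addresses and sign data, and, combined with the contraction estimate and continuity of $\Theta$, yields continuity of $\phi$ on all of $J(g)_\pm$, including at points lying over preimages of critical points, where the two sheets $\{-\},\{+\}$ must glue together exactly as the overlapping ray tails of $f$ do. The functional equation and $\phi(I(g)_\pm)=I(f)$ persist under the extension by density of $I(g)_\pm$ in $J(g)_\pm$. \emph{Step 4 (surjectivity).} The image $\phi(J(g)_\pm)$ is closed (by properness of $\phi$ on the relevant pieces, after adjoining $\infty$) and, by Step~1 and the functional equation, contains the full backward orbit of a ray tail of $f$, which is dense in $J(f)$; hence $\phi$ is onto.

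The genuinely hard part is Step~3: showing that the branch chosen by the sign $\epsilon$ varies continuously as model points cross a ``splitting'' locus, and quantifying, via the orbifold metric, that the inward pullback of ray segments converges uniformly — equivalently, that the combinatorially defined $\phi$ does not tear the two sheets apart where the rays of $f$ overlap, nor fail to converge at the endpoints. Everything else is either soft (the functional equation, surjectivity by density) or imported (orbifold expansion from \cite{mio_orbifolds}, the Cantor‑bouquet structure, rigidity at infinity from \cite{lasseRidigity}, and the signed‑address machinery of \cite{mio_signed_addr}).
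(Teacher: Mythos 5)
Your outline follows the same overall strategy as the paper --- use the conjugacy near infinity coming from the parameter space, pull back along inverse branches selected by signed addresses, invoke orbifold expansion for convergence, and get surjectivity from closedness of the image plus density of $I(f)$ in $J(f)$ --- but the step you yourself flag as the crux (your Step~3) is exactly where the real work lies, and your sketch does not contain the idea that makes it go through. The expansion estimate $\Vert \Deriv f\Vert_{\Or}\geq\Lambda$ only yields a Cauchy statement for the pulled-back ray segments if one first has a \emph{uniform} bound on the orbifold length of the segments being pulled back; but the relevant pieces of dynamic rays need not be rectifiable at all, so ``the selected ray segments are $\rho$-Cauchy'' does not follow from expansion alone. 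The paper resolves this by never measuring the ray pieces themselves: it replaces them by curves in the same post-$0$-homotopy class of uniformly bounded orbifold length (Theorem \ref{cor_homot2}), pulls those back using the post-homotopy lifting property (Proposition \ref{cor_homot}) together with Corollary \ref{cor_uniform}, and so obtains the geometric bound $d_\Or(\phi_{n+1}(x),\phi_n(x))\leq\mu/\Lambda^{n}$ of Lemma \ref{lem_UHSC}. The uniform initial bound $\mu$ in turn uses that the failure of the relation $\theta\circ g=f\circ\theta$ is confined to a bounded set with controlled displacement (Theorem \ref{thm_CB}\ref{item:commute},\ref{item:boundedC}), together with finiteness of $P_J$ in the relevant compact set. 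None of this appears in your argument, and without it the quantitative heart of Step~3 is missing.

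A second, related gap: you define $\phi$ only on $I(g)_\pm$ (your Step~2 needs $g^{n}(x)$ to reach the high-potential region) and then extend to the endpoints ``by density''. Pointwise limits along hairs do not give continuity of the extension, nor the functional equation at non-escaping endpoints, without uniform control across hairs; ``continuity persists under the extension by density'' is precisely what has to be proved. The paper avoids this by making every approximation $\phi_n$ globally defined on $J(g)_\pm$ (via $\theta$, which is defined on all of $J(g)$, not just at high potential) and continuous (Proposition \ref{new_prop_contphi_k}, which uses that the inverse branches agree on whole open intervals of signed addresses, Theorem \ref{thm_inverse_hand}\ref{item1}), so that the Cauchy estimate gives \emph{uniform} convergence and hence continuity of the limit everywhere at once, endpoints included. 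Finally, for surjectivity your appeal to ``properness after adjoining $\infty$'' needs substance: the model topology is neither metrizable nor second countable, and one must first show that $J(g)_\pm$ is locally compact Hausdorff and that its one-point compactification is sequential, with $x_k\to\tilde\infty$ iff $\pi(x_k)\to\infty$ (Lemma \ref{compactification}), before the closed-image argument can be run. So the route is the right one, but the decisive ingredients --- bounded-length homotopy representatives, the lifting property, globally defined continuous approximations, and the compactification lemma --- still need to be supplied.
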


We remark that the the semiconjugacy in Theorem \ref{thm_main_intro} respects the natural conjugacy at infinity from \cite{lasseRigidity}. Since all functions considered in Theorem~\ref{thm_main_intro1} are in $\mathcal{CB}$ and strongly postcritically separated, Theorem~\ref{thm_main_intro1} follows from Theorem~\ref{thm_main_intro}. Likewise, Theorem~\ref{thm_1intro} is a consequence of the following corollary:
\begin{cor}[Landing of rays for strongly postcritically separated functions in $\CB$] \label{cor_intro} Under the assumptions of Theorem~\ref{thm_main_intro}, every dynamic ray of $f$ lands, and every point in $J(f)$ is either on a dynamic ray, or is the landing point of at least one such ray.
\end{cor}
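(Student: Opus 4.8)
The plan is to deduce both halves of the statement from the semiconjugacy $\phi\colon J(g)_\pm\to J(f)$ of Theorem~\ref{thm_main_intro}, using the structure of $J(g)$. Since $f\in\CB$ and $g=\lambda f$ is of disjoint type, $J(g)$ is a Cantor bouquet: every point of $J(g)$ lies on a unique \emph{hair}, an arc $\eta\colon[0,\infty)\to J(g)$ with endpoint $\eta(0)$ and $\eta((0,\infty))\subseteq I(g)$, along which $g^n\to\infty$ uniformly and on which $g$ is injective. For $\iota\in\{-,+\}$ write $\eta^{\iota}\defeq(\eta,\iota)\colon[0,\infty)\to J(g)_\pm$. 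Since $\tilde g^n\circ\eta^{\iota}$ again parametrises part of an open signed hair (its first coordinate is a connected subset of $I(g)$, hence lies in one hair), the relation $f^n\circ(\phi\circ\eta^{\iota})=\phi\circ(\tilde g^n\circ\eta^{\iota})$, the uniform escape along hairs, and the fact that the topology of $J(g)_\pm$ is arranged so that $\phi$ respects the approach to infinity together show that each $\phi\circ\eta^{\iota}|_{[t,\infty)}$ ($t>0$) is a ray tail; indeed $\phi\circ\eta^{\iota}|_{(0,\infty)}$ is, up to reparametrisation, a dynamic ray of $f$ --- this is part of the analysis behind Theorem~\ref{thm_main_intro}, matching hairs with signed external addresses in the spirit of \cite{mio_signed_addr}. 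The key observation is then immediate: $\eta^{\iota}$ and $\phi$ are continuous at $0$, so $\lim_{t\to0^+}\phi(\eta^{\iota}(t))=\phi(\eta^{\iota}(0))$ exists. Hence \emph{every dynamic ray arising from a signed hair lands}, at the $\phi$-image of that hair's endpoint.

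Granting this, the second assertion is immediate. If $w\in J(f)$, surjectivity of $\phi$ gives $w=\phi(\eta^{\iota}(t_0))$ for some hair $\eta$, sign $\iota$, and $t_0\geq0$. If $t_0>0$ then $w$ lies on the dynamic ray $\phi\circ\eta^{\iota}|_{(0,\infty)}$; if $t_0=0$ then $w=\lim_{t\to0^+}\phi(\eta^{\iota}(t))$ is its landing point. So every point of $J(f)$ is on a dynamic ray or is the landing point of one.

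For the first assertion it remains to show that \emph{every} dynamic ray $\delta\colon(0,\infty)\to I(f)$ equals, up to reparametrisation, $\phi\circ\eta^{\iota}$ for some signed hair (with domain possibly restricted to $[a,\infty)$, $a\geq0$); landing then follows from the first paragraph. I would lift $\delta$ through $\phi$. For $s>0$, write $\delta(s)=\phi(\eta_s^{\iota_s}(\tau_s))$ on an open signed hair --- possible since $\delta(s)\in I(f)=\phi(I(g)_\pm)$ --- and set $c_s\defeq\phi\circ\eta_s^{\iota_s}|_{(0,\infty)}$, a dynamic ray through $\delta(s)$. By the essential uniqueness of ray tails through a common point (\cite{RRRS}), $c_s$ and $\delta$ share a ray tail; since the outward tail of a dynamic ray from any of its points is uniquely determined, $c_s$ and $\delta$ in fact agree on all of $[s,\infty)$. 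Letting $s\to0^+$ shows that $\delta|_{[t,\infty)}$ lifts to a tail of a signed hair for every $t>0$; one must then prove these lifts are mutually compatible, so that $\delta$ lifts to a portion $\eta^{\iota}|_{[a,\infty)}$ of a single open signed hair, giving $\delta=\phi\circ\eta^{\iota}|_{[a,\infty)}$ and hence landing.

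The main obstacle is exactly this compatibility step --- equivalently, the surjectivity of the correspondence ``signed hair $\mapsto$ dynamic ray''. A ray tail of $f$ admits many lifts to $J(g)_\pm$, one for each way a dynamic ray extending it can branch at the iterated preimages of critical points it meets on its way to its endpoint, as $f=\cosh$ already illustrates at $0$ and its preimages; what must be verified is that $\delta$'s own sequence of branch choices is realised coherently by a single signed hair, not merely by the rays $c_s$, each of which tracks $\delta$ only partway in. Controlling the inner ends of these lifts is where the design of the model space in \S\ref{sec_model}, and through it the expansion of the orbifold metric near $J(f)$ from \cite{mio_orbifolds}, is used. Everything else --- continuity and surjectivity of $\phi$, the semiconjugacy, and standard properties of Cantor bouquets and ray tails --- is soft.
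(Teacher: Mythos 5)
Your second assertion and your treatment of the rays that do come from signed hairs are fine in spirit: they correspond to the paper's use of the ``Moreover'' clause of Theorem \ref{thm_main}, namely that $\phi$ maps each $J_{(\s,\ast)}$ bijectively onto $\overline{\Gamma(\s,\ast)}$, so that each canonical ray lands and, by surjectivity of $\phi$, $J(f)=\bigcup_{(\s,\ast)}\overline{\Gamma(\s,\ast)}$. (One correction even here: the image of a signed hair is a \emph{canonical} ray with its endpoint, which by Theorem \ref{thm_signed} may be only a ray tail rather than a maximal dynamic ray, and canonical rays overlap pairwise; so the clean dictionary ``signed hair $\leftrightarrow$ dynamic ray'' you assert in your first paragraph is not what Theorem \ref{thm_main} gives.)

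The genuine gap is exactly the step you flag yourself: passing from ``every canonical ray lands'' to ``every dynamic ray lands.'' You attempt to build this bridge by lifting an arbitrary dynamic ray $\delta$ through $\phi$, but the argument rests on the claim that ``the outward tail of a dynamic ray from any of its points is uniquely determined,'' which is false precisely in the situation this paper is designed for: at a critical point (or precritical point) lying on a ray, two ray tails through the same point can diverge outward, as the $\cosh$ example at $0$ shows. The \cite{RRRS}-type uniqueness only yields a common unbounded sub-tail, not agreement on all of $[s,\infty)$, so your rays $c_s$ need not track $\delta$ from $s$ onward, and the ``compatibility'' of the lifts as $s\to 0^+$ is left open. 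The paper does not prove this bridge here at all: it is part of Theorem \ref{thm_signed}, quoted from \cite{mio_signed_addr}, which states explicitly that landing of all canonical rays implies landing of all dynamic rays (applicable because $f\in\CB$ is criniferous and, being strongly postcritically separated, has no asymptotic values on $J(f)$). Citing that statement, together with the ``Moreover'' part of Theorem \ref{thm_main}, closes your gap and reduces the corollary to a few lines.
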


\begin{structure}
The proof of Theorem \ref{thm_main_intro} combines several different tools, some of them developed in separate papers. For the reader's convenience, all results required in this paper are included in sections \ref{sec_symbolic}-\ref{sec_postsep}, and as a roadmap, we now highlight the main ideas.

\noindent Let $f\in \CB$ as in Theorem \ref{thm_main_intro}, and let $g\defeq \lambda f$ of disjoint type for some $\lambda \in \C \setminus \{0\}.$ 

\begin{itemize}[leftmargin=\dimexpr\labelwidth + 4\labelsep\relax]
\item[\S2.] Since $f\in \CB$, $f$ is criniferous. Moreover, by assumption, $J(f)$ has no asymptotic values on its Julia set. In \cite{mio_signed_addr}, combinatorics for these maps are developed, where the extensions of rays at critical points, as described for the map $\cosh$, are formalized. We define the set of \textit{signed addresses} $\Addr(f)_\pm\defeq \Addr(f)\times \{-,+\}$, that we endow with a topology such that each $z \in I(f)$ has at least two signed
addresses that depend continuously on $z$. Then, we have that $I(f)$ is a collection of so-called \textit{canonical rays}
\begin{equation}\label{eq_outline2}
\{\Gamma(\s,\ast)\}_{(\s,\ast)\in \Addr(f)_\pm},
\end{equation}
that overlap piecewise between (preimages of) critical points. Moreover, we can write each canonical ray as a collection of nested curves, $\Gamma(\s,\ast)=\bigcup_{n\geq 0}\gamma^n_{(\s, \ast)}$, such that we have well-defined $n$-th inverse branches $f^{-n}_{(\s, \ast)}$ in neighbourhoods of these curves.

\item[\S3.] Since $f\in \B$, \cite[\S 3]{lasseRigidity} provides us with a homeomorphism $\theta$ that conjugates $f$ and $g$ in subsets of the Julia sets. In \cite{mio_newCB}, the fact that for $f\in \CB$, $J(g)$ is a Cantor bouquet is exploited to extend continuously 
\begin{equation} \label{eq_outline1}
\theta\colon J(g)\to J(f),
\end{equation}
ensuring that points are \textit{moved} in a controlled way.

\item[\S4.] Our arguments will rely on expansion of $f$. However, with escaping singular values, $f$ is not a covering map in a neighbourhood of $J(f)$. Instead, it is shown in \cite{mio_orbifolds} that $f$ is an \textit{orbifold covering map} that expands an \textit{orbifold metric} that sits in a neighbourhood of $J(f)$. In addition, we will use a modified notion of homotopy that will allow us to pull back curves of uniformly bounded orbifold length.
\item[\S5.] We define the model space $J(g)_\pm$ and associated model function $\tilde{g}\colon J(g)_\pm \to J(g)_\pm$, as sketched before. We endow $J(g)_\pm$ with a topology that is related to that of $\Addr(f)_\pm$, and prove properties of this space.
\vspace{-17pt}
\begin{figure}[htb]
\begingroup%
  \makeatletter%
  \providecommand\color[2][]{%
    \errmessage{(Inkscape) Color is used for the text in Inkscape, but the package 'color.sty' is not loaded}%
    \renewcommand\color[2][]{}%
  }%
  \providecommand\transparent[1]{%
    \errmessage{(Inkscape) Transparency is used (non-zero) for the text in Inkscape, but the package 'transparent.sty' is not loaded}%
    \renewcommand\transparent[1]{}%
  }%
  \providecommand\rotatebox[2]{#2}%
  \newcommand*\fsize{\dimexpr\f@size pt\relax}%
  \newcommand*\lineheight[1]{\fontsize{\fsize}{#1\fsize}\selectfont}%
  \ifx\svgwidth\undefined%
    \setlength{\unitlength}{388.34645669bp}%
    \ifx\svgscale\undefined%
      \relax%
    \else%
      \setlength{\unitlength}{\unitlength * \real{\svgscale}}%
    \fi%
  \else%
    \setlength{\unitlength}{\svgwidth}%
  \fi%
  \global\let\svgwidth\undefined%
  \global\let\svgscale\undefined%
  \makeatother%
  \begin{picture}(1,0.49635036)%
    \lineheight{1}%
    \setlength\tabcolsep{0pt}%
    \put(0.10960808,0.39996006){\color[rgb]{0,0,0}\makebox(0,0)[lt]{\lineheight{1.25}\smash{\begin{tabular}[t]{l}\fontsize{11pt}{1em}$(z,+)$\end{tabular}}}}%
    \put(0.48883019,0.39882606){\color[rgb]{0,0,0}\makebox(0,0)[lt]{\lineheight{1.25}\smash{\begin{tabular}[t]{l}$\tilde{g}^n$\end{tabular}}}}%
    \put(0.0052581,0.44870332){\color[rgb]{0.50196078,0.50196078,0.50196078}\makebox(0,0)[lt]{\lineheight{1.25}\smash{\begin{tabular}[t]{l}\fontsize{9pt}{1em} $J(g)_{\pm}$\end{tabular}}}}%
    \put(0.78500333,0.23574965){\color[rgb]{0,0,0}\makebox(0,0)[lt]{\lineheight{1.25}\smash{\begin{tabular}[t]{l}$\theta$\end{tabular}}}}%
    \put(0.11379946,0.3365583){\color[rgb]{0,0,0}\makebox(0,0)[lt]{\lineheight{1.25}\smash{\begin{tabular}[t]{l}\fontsize{11pt}{1em}$(z,-)$\end{tabular}}}}%
    \put(0,0){\includegraphics[width=\unitlength,page=1]{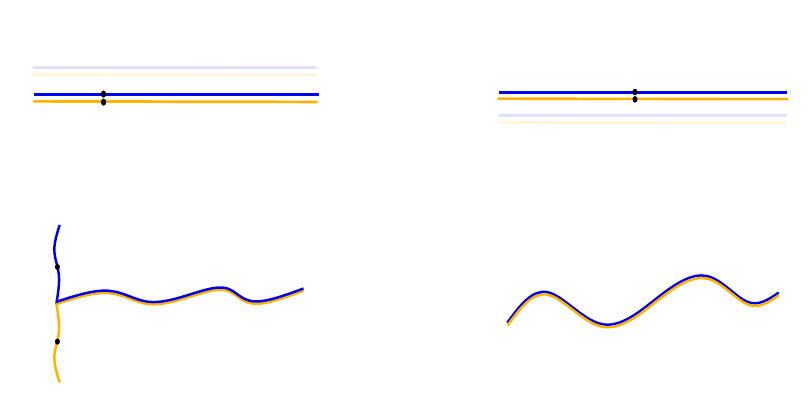}}%
    \put(0.76404327,0.40630703){\color[rgb]{0,0,0}\makebox(0,0)[lt]{\lineheight{1.25}\smash{\begin{tabular}[t]{l}\fontsize{11pt}{1em}$\tilde{g}^n(z,+)$\end{tabular}}}}%
    \put(0.76495984,0.33465415){\color[rgb]{0,0,0}\makebox(0,0)[lt]{\lineheight{1.25}\smash{\begin{tabular}[t]{l}\fontsize{11pt}{1em}$\tilde{g}^n(z,-)$\end{tabular}}}}%
    \put(0.76602772,0.06751358){\color[rgb]{0,0,0}\makebox(0,0)[lt]{\lineheight{1.25}\smash{\begin{tabular}[t]{l}\fontsize{11pt}{1em}$ \theta(g^n(z))$\end{tabular}}}}%
    \put(0.07956505,0.16603863){\color[rgb]{0,0,0}\makebox(0,0)[lt]{\lineheight{1.25}\smash{\begin{tabular}[t]{l}\fontsize{11pt}{1em}$ \phi_n(z,+)$\end{tabular}}}}%
    \put(0.08300031,0.06683009){\color[rgb]{0,0,0}\makebox(0,0)[lt]{\lineheight{1.25}\smash{\begin{tabular}[t]{l}\fontsize{11pt}{1em}$ \phi_n(z,-)$\end{tabular}}}}%
    \put(0,0){\includegraphics[width=\unitlength,page=2]{outline.pdf}}%
    \put(0.49272817,0.1698493){\color[rgb]{0,0,0}\makebox(0,0)[lt]{\lineheight{1.25}\smash{\begin{tabular}[t]{l}$f^{-n}_{(\underline{s},+)}$\end{tabular}}}}%
    \put(0.49107781,0.05700567){\color[rgb]{0,0,0}\makebox(0,0)[lt]{\lineheight{1.25}\smash{\begin{tabular}[t]{l}$f^{-n}_{(\underline{s},-)}$\end{tabular}}}}%
    \put(0,0){\includegraphics[width=\unitlength,page=3]{outline.pdf}}%
    \put(0.01233221,0.23773018){\color[rgb]{0.50196078,0.50196078,0.50196078}\makebox(0,0)[lt]{\lineheight{1.25}\smash{\begin{tabular}[t]{l}\fontsize{9pt}{1em}$ J(f)$\end{tabular}}}}%
    \put(0.30229924,0.09351058){\color[rgb]{0.50196078,0.50196078,0.50196078}\makebox(0,0)[lt]{\lineheight{1.25}\smash{\begin{tabular}[t]{l}\fontsize{9pt}{1em}$ \gamma^n_{(\underline{s}, -)}$\end{tabular}}}}%
    \put(0.29107912,0.14836634){\color[rgb]{0.50196078,0.50196078,0.50196078}\makebox(0,0)[lt]{\lineheight{1.25}\smash{\begin{tabular}[t]{l}\fontsize{9pt}{1em} $\gamma^n_{(\underline{s}, +)}$\end{tabular}}}}%
    \put(0.90822187,0.15198379){\color[rgb]{0.50196078,0.50196078,0.50196078}\makebox(0,0)[lt]{\lineheight{1.25}\smash{\begin{tabular}[t]{l}\fontsize{9pt}{1em} $\gamma^0_{(\sigma^n(\underline{s}), \pm)}$\end{tabular}}}}%
  \end{picture}%
\endgroup%
\caption{A schematic of the functions involved in the proof of Theorem \ref{thm_main_intro}.}
	\label{figure_outline}
\end{figure}
\vspace{-5pt}
\item[\S6.] We will obtain the map $\phi$ that semiconjugates $\tilde{g}\vert_{J(g)_\pm}$ to $f\vert_{J(f)}$ as the limit of a sequence of maps $\{\phi_n\}_{n\geq 0}$, that, in a rough sense, are defined as follows: Let $(z, \ast)\in J(g)_\pm$, where $z\in J(g)$ and $\ast \in \{-,+\}$. We iterate forwards $n$ times in the model space to obtain $\tilde{g}^n(z, \ast)$. Then, we \textit{project} to the dynamical plane of $f$ by using the map $\theta$ in the first coordinate, that is, we get $\theta(g^n(z))$. Depending on the sign ``$\ast$'' of our original point and on its signed address $(\s, \ast)$, we pull back under an appropriate inverse branch $f^{-n}_{(\s, \ast)}$, to obtain $\phi_n(z, \ast)$. In particular, this point belongs to $\gamma^n_{(\s, \ast)}$, see Figure~\ref{figure_outline}. Using orbifold expansion, we show that $\{\phi_n(z,\ast)\}_{n\geq 0}$ forms a Cauchy sequence, and so converges to a limit function $\phi$. All maps involved are carefully defined to be continuous, and so continuity of $\phi$ will follow. Finally, $\phi$ maps each component of $J(g)_\pm$ to the closure of a canonical ray from \eqref{eq_outline2}, and so we obtain a description of $J(f)$ as a collection of canonical rays that land.
\end{itemize}
\end{structure}

\subsection*{Basic notation}
As introduced throughout this section, the Fatou, Julia and escaping set of an entire function $f$ are denoted by $F(f)$, $J(f)$ and $I(f)$ respectively. The set of critical values is $\CV(f)$, that of asymptotic values is $\AV(f)$, and the set of critical points will be $\Crit(f)$. The set of singular values of $f$ is $S(f)$, and $P(f)$ denotes the postsingular set. Moreover, $P_{J}\defeq P(f)\cap J(f)$. We denote the complex plane by $\C$, the Riemann sphere by $\widehat{\C}$, and the disc centred at zero with radius $R$ by $\D_R$. We will indicate the closure of a domain $U$ by $\overline{U}$, which must be understood to be taken in $\C$. $A\Subset B$ means that $A$ is compactly contained in $B$. The annulus with radii $a,b \in \R^+$, $a<b$, will be denoted by $A(a,b)\defeq\lbrace w\in \C : a< \vert w \vert < b \rbrace$. For a holomorphic function $f$ and a set $A$, $\Orb^{-}(A)\defeq \bigcup^{\infty}_{n=0} f^{-n}(A)$ and $\Orb^{+}(A)\defeq \bigcup^{\infty}_{n=0} f^{n}(A)$ are the respective backward and forward orbit of $A$ under $f$.
\subsection*{Acknowledgements}
I am very grateful to my supervisors Lasse Rempe and Dave Sixsmith for their continuous help and advice. I also thank Vasiliki Evdoridou, Daniel Meyer, Phil Rippon and the referees for valuable comments.

\section{Combinatorics: signed addresses and inverse branches }\label{sec_symbolic}
This section summarizes the combinatorial concepts and results developed in \cite{mio_signed_addr} on criniferous maps with escaping critical values that we shall require. 

We start by recalling the widely used notion of \textit{external address} for functions in $\B$, that allows to assign symbolic dynamics to points whose orbit stays away from a neighbourhood of their singular set.
\begin{defn}[Tracts, fundamental domains]\label{def_fund}
Fix $f\in\B$ and let $D$ be a bounded Jordan domain around the origin, containing $S(f)$ and $f(0)$. Each connected component of $f^{-1}(\C\setminus \overline{D})$ is a \emph{tract} of $f$. Let $\delta$ be an arc connecting a point of $\overline{D}$ to infinity in the complement of the closure of the tracts. Denote
\begin{equation}\label{eq_deffund}
\mathcal{W} \defeq \C\setminus( \overline{D}\cup\delta).
\end{equation}
Each connected component of $f^{-1}(\mathcal{W})$ is a \emph{fundamental domain} of $f$, and we call the collection of all of them an \textit{alphabet of fundamental domains}, that we denote by $\mathcal{A}(D,\delta)$. Moreover, for each $F\in \mathcal{A}(D,\delta)$, $\unbdd{F}$ is the unbounded connected component of $F\setminus \overline{D}$. 
\end{defn}

\begin{defn}[External addresses] \label{def_extaddr}
Let $f\in\B$ and let $\mathcal{A}(D,\delta)$ be an alphabet of fundamental domains. An \emph{(infinite) external address} is a sequence $\s= F_0 F_1 F_2 \dots$ of elements in $\mathcal{A}(D,\delta)$. Moreover, for each external address $\s$, we let
\begin{equation}\label{eq_Js}
J_{\s}\defeq\left\{z\in\C\colon f^n(z)\in\unbdd{F_n} \text{ for all $n\geq 0$}\right\},
\end{equation}
and denote by $\Addr(f)$ the set of all $\s$ for which $J_{\s}$ is non-empty. If $z\in J_\s$ for some $\s \in \Addr(f)$, then we say that $z$ \textit{has (external) address} $\s.$ Moreover, $\sigma$ stands for the one-sided \emph{shift operator} on external addresses; that is, $\sigma(F_0 F_1 F_2\ldots ) = F_1F_2 \ldots$. 
\end{defn}

\begin{observation}[Points with external address]\label{obs_addrz} Following the definition above, the sets in \eqref{eq_Js} lie entirely in $J(f)$, see \cite[Lemma 2.6]{lasse_dreadlocks}. Since these sets are by definition pairwise disjoint, whenever it is defined, the external address of a point is unique. Moreover, it follows from \cite[Proposition 2.8]{helenaSemi} that in the special case when $f$ is of disjoint type, all points in $J(f)$ have a (unique) external address. That is,
	\begin{equation}\label{eq_continuadisj}
	f \text { is of disjoint type }\quad \Rightarrow \quad J(f)=\bigcup_{\s\in \Addr(f)}J_\s,
	\end{equation}
where the union is disjoint.
\end{observation}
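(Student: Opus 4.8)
The statement has three ingredients, and the plan is to dispatch them in order.

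\emph{Step 1: $J_\s\subseteq J(f)$.} This is precisely \cite[Lemma 2.6]{lasse_dreadlocks}, which I would simply invoke; informally, if $z\in J_\s$ with $\s=F_0F_1F_2\dots$, then for every $n\geq0$ we have $f^n(z)\in\unbdd{F_n}$, which lies in a tract of $f$, i.e.\ in a component of $f^{-1}(\C\setminus\cl D)$; hence $f^{m}(z)\in\C\setminus\cl D$ for all $m\geq1$. A forward orbit confined to the tracts cannot lie in a Fatou component, by the standard expansion estimate for functions in $\B$ (Eremenko--Lyubich), so $z\in J(f)$.

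\emph{Step 2: uniqueness of the address.} The key observation is that, by Definition~\ref{def_fund}, the elements of $\mathcal{A}(D,\delta)$ are the connected components of the open set $f^{-1}(\mathcal{W})$, hence pairwise disjoint; consequently the unbounded pieces $\unbdd{F}$, $F\in\mathcal{A}(D,\delta)$, are pairwise disjoint as well. Thus if $z\in J_\s\cap J_{\s'}$ with $\s=(F_n)_{n\geq0}$ and $\s'=(F_n')_{n\geq0}$, then $f^n(z)\in\unbdd{F_n}\cap\unbdd{F_n'}$ for every $n$, which forces $F_n=F_n'$ for all $n$, i.e.\ $\s=\s'$. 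In particular $J_\s\cap J_{\s'}=\varnothing$ whenever $\s\neq\s'$, so a point carries at most one external address.

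\emph{Step 3: the disjoint-type case.} The inclusion ``$\supseteq$'' in \eqref{eq_continuadisj} follows from \eqref{eq_Js} together with Step 1, so the content is the reverse inclusion, which I would deduce from \cite[Proposition 2.8]{helenaSemi}. The point is that for $f$ of disjoint type one may normalise so that $f(\cl D)\subset D$, whence $J(f)=\bigcap_{n\geq0}f^{-n}(\C\setminus\cl D)$ and every $z\in J(f)$ satisfies $f^n(z)\in\C\setminus\cl D$ for all $n$. The delicate part — and the reason a citation is warranted rather than a one-line argument — is that $\delta$ can be chosen so that in addition the orbit of $z$ avoids $\Orb^{-}(\delta)$ and, at each step, lands in the \emph{unbounded} component $\unbdd{F_n}$ of a fundamental domain $F_n$; this is exactly what \cite[Proposition 2.8]{helenaSemi} provides. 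Reading off the sequence $\s=F_0F_1F_2\dots$ then shows $z\in J_\s$, and together with Step 2 this yields $J(f)=\bigsqcup_{\s\in\Addr(f)}J_\s$. I expect Step 3 to be the only genuinely non-formal point: Steps 1 and 2 amount to a citation and a short combinatorial argument, respectively.
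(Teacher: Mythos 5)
Your proposal is correct and matches the paper's treatment: the paper establishes this observation exactly by citing \cite[Lemma 2.6]{lasse_dreadlocks} for $J_\s\subseteq J(f)$, noting that the sets $J_\s$ are pairwise disjoint (since the fundamental domains, hence the sets $\unbdd{F}$, are disjoint) to get uniqueness, and citing \cite[Proposition 2.8]{helenaSemi} for the disjoint-type equality \eqref{eq_continuadisj}. Your added sketches of why each citation does what is claimed are accurate but not part of the paper's argument, which is purely citation-based.
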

However, not all escaping points of every function in $\B$ have an external address, as in particular occurs for $f\in\B$ with singular values in $I(f)$. This problem is resolved in \cite{mio_signed_addr} for criniferous maps in $\B$ with escaping critical values. More precisely, recall from the introduction that whenever a ray tail contains a critical value, there are components of its preimage with critical points, that can be interpreted as tails that \textit{split} or \textit{break} at them, and can be extended to overlap pairwise. In order to assign dynamically meaningful combinatorics to these points, we introduce the concept of \textit{signed address}:

\begin{discussion}[Space of signed addresses]\label{discussion_signed_addr}
Let $f\in \B$, and let $\Addr(f)$ be a set of external addresses defined from an alphabet of fundamental domains $\mathcal{A}(D,\delta)$. Consider the set
$$\Addr(f)_\pm \defeq \Addr(f) \times \{-,+\},$$
that we shall endow with a topology, that we define as follows. There is a natural \emph{cyclic order} on the alphabet $\mathcal{A}(D,\delta)$ together with the curve $\delta$: if $X, Y, Z\in \mathcal{A}(D,\delta) \cup \{\delta\}$, then we write
\begin{equation}\label{eq_orderinfty}
[X,Y,Z]_{\infty} \: \: \Leftrightarrow \: \: Y \text{ tends to infinity between }X\text{ and } Z \text{ in positive orientation.}
\end{equation}
From this cyclic order, it is possible to define a \textit{lexicographical order} on the set $\Addr(f)$: We can define a linear order on the set of fundamental domains by ``cutting'' $\delta$ the following way: $$F < \tilde{F} \quad \text{ if and only if } \quad [\delta, F, \tilde{F}]_\infty.$$ 
Then, the set of fundamental domains becomes totally ordered, and this order gives rise to a lexicographical order ``$<_{_\ell}$'' on external addresses, defined in the usual sense.

Let us give the set $\lbrace -,+\rbrace$ the order $\lbrace -\rbrace \prec\lbrace +\rbrace$. Define the linear order on $\Addr(f)_\pm$
\begin{equation} \label{eq_linear_addr}
(\ul{s}, \ast )<_{_A} (\ul{\tau}, \star) \qquad \text{ if and only if } \qquad \ul{s} <_{_\ell} \ul{\tau} \quad \text{ or } \quad \ul{s} =_{_\ell} \ul{\tau}\: \text{ and } \: \ast \prec \star,
\end{equation}
where the symbols ``$\ast, \star$'' denote generic elements of $\lbrace-, +\rbrace.$ 
This linear order gives rise to a cyclic order: for $a,x,b \in \Addr(f)_\pm$,
\begin{equation*} \label{eq_orderAfpm}
[a,x,b]_{_A} \quad \text{if and only if} \quad a<_{_A} x<_{_A} b \quad \text{ or } \quad x <_{_A}b <_{_A} a \quad \text{ or }\quad b <_{_A} a <_{_A} x.
\end{equation*}
This cyclic order allows us to provide the set $\Addr(f)_\pm$ with a topology $\tau_A$: given two different elements $(\s,\ast),(\ultau,\star) \in \Addr(f)_{\pm}$, we define the \textit{open interval} from $(\s,\ast)$ to $(\ultau,\star)$, denoted by $((\s,\ast),(\ultau,\star))$, as the set of all signed addresses $(\ul{\alpha}, \cdot)\in \Addr(f)_\pm$ such that $[(\s,\ast), (\ul{\alpha}, \cdot), (\ultau,\star)]_{A}$. The collection of all these open intervals forms a base for the \textit{cyclic order topology.} 
\end{discussion}
\begin{defn}[Signed external addresses for criniferous functions]\label{defn_signedaddr} Let $f\in \B$ be a criniferous function and let $(\Addr(f)_\pm, \tau_A)$ as in \ref{discussion_signed_addr}. A \emph{signed (external) address} for $f$ is any element of $\Addr(f)_\pm$. 
\end{defn}
Our next goal is to define signed addresses for all escaping points of certain criniferous functions. Since for criniferous maps in $\B$ the sets in \eqref{eq_Js} contain unbounded curves, see \cite[Theorem 2.12]{mio_signed_addr}, we will achieve our goal by extending them in a careful and systematic way. The following definition specifies which sub-curves can be used to perform such extensions:
\begin{defn}(Initial configuration of tails)\label{def_initial_conf} Let $f\in \B$ so that for each $\s \in \Addr(f)$, there exists a curve $\gamma^0_\s\subset J_\s$ that is either a ray tail, or a dynamic ray possibly with its endpoint. The set of curves $\{\gamma^0_\s\}_{\s \in \Addr(f)}$ is a \emph{valid initial configuration} for $f$ if, for each $\s\in \Addr(f)$, $f(\gamma^0_\s) \subset \gamma^0_{\sigma(\s)}$ and
\begin{equation}\label{eq_S0}
I(f) \subset \bigcup_{\s \in \Addr(f)} \bigcup_{n\geq 0} f^{-n}(\gamma^0_\s).
\end{equation}	
\end{defn}	 

The next theorem, which gathers \cite[Definition 3.5, Theorem 3.8 and Observation~3.13]{mio_signed_addr}, tells us that the escaping sets of the functions we consider, can be described as a collection of rays, that we call \textit{canonical}, indexed by signed addresses. For a more detailed description on the construction and overlapping of these curves, we refer to \cite[\S 3]{mio_signed_addr}, or \cite[\S5]{mio_cosine} for the maps $\cosh$ and $\cosh^2$.

\begin{dfn&thm}[Canonical rays]\label{thm_signed} \normalfont Let $f\in \B$ be criniferous such that $J(f)\cap \AV(f)=\emptyset$. Let $\{\gamma^0_\s\}_{\s \in \Addr(f)}$ be a valid initial configuration for $f$. Then, for each $(\s, \ast) \in \Addr(f)_\pm$, there exists a curve $\Gamma(\s, \ast)$, that is either a ray tail or a dynamic ray possibly with its endpoint. Moreover, $\Gamma(\s, \ast)$ can be written as a nested union 
\begin{equation}\label{eq_canonical}
\Gamma(\s, \ast)= \bigcup_{n\geq 0}\gamma^n_{(\s, \ast)},
\end{equation}
satisfying:
\begin{enumerate}[label=(\alph*)]
\item $\gamma^0_{(\s,-)}\defeq\gamma^0_{(\s,+)}\defeq\gamma^0_\s\subset J_\s$; 
\item for all $n\geq 1$, $\gamma^{n-1}_{(\s, \ast)} \subseteq \gamma^{n}_{(\s, \ast)}$ and $f \colon \gamma^{n}_{(\s, \ast)} \to \gamma^{n-1}_{(\sigma(\s), \ast)}$ is a bijection.
\label{item:tails_bijection}
\end{enumerate}
We say that $\Gamma(\s, \ast)$ is a \emph{canonical ray}. Landing of all canonical rays implies landing of all dynamic rays in $J(f)$, and 
\begin{equation}\label{eq_thmcanonical}
I(f) \subset \bigcup_{(\s, \ast)\in \Addr(f)_\pm}\Gamma(\s, \ast).
\end{equation}
\end{dfn&thm}
\begin{defn}[Signed addresses for escaping points]\label{def_addrpm} Following Theorem~\ref{thm_signed}, for each $z\in I(f)$, we say that $z$ has \emph{signed (external) address} $(\s, \ast)$ if $ z\in \Gamma(\s, \ast)$, and we denote by $\Addr(z)_\pm$ the set of all signed addresses of $z$. By \cite[Proposition 3.9 and Observation~3.11]{mio_signed_addr}, for each $z\in I(f)$,
\begin{equation}\label{eq_singed_adddr}
\# \Addr(z)_\pm= 2 \prod^{\infty}_{j=0}\deg(f,f^{j}(z))<\infty.
\end{equation}
\end{defn}

One of the key features of canonical rays is that, given the topology we have provided $\Addr(f)_\pm$ with, we can define inverse branches on neighbourhoods of the nested curves in \eqref{eq_canonical} in such a way that the branches agree in whole intervals of addresses. These branches will be instrumental in the proof of Theorem~\ref{thm_main_intro}. The following theorem is \cite[Proposition~4.5 and Theorem 4.6]{mio_signed_addr}.
\begin{thm}[Inverse branches for canonical tails]\label{thm_inverse_hand}\normalfont Let $f\in \B$ be criniferous such that $J(f)\cap \AV(f)=\emptyset$, $P(f)\setminus I(f)$ is bounded and $S(f)\cap I(f)$ is finite. Then, there is a choice of $\Addr(f)$ such that, if we apply Theorem~\ref{thm_signed} to $f$ given any valid initial configuration, then, for each $n\in \N$ and $(\s, \ast) \in \Addr(f)_\pm$, there exists a, not necessarily open, neighbourhood
$$\tau_n(\s, \ast) \supset \gamma^n_{(\s, \ast)}$$ with the following properties:
\begin{enumerate}[label=(\arabic*)] 
\item \label{item1} There exists an open interval of signed addresses $\Upsilon^n(\s,\ast) \ni (\s,\ast)$ such that $$\tau_n(\ul{\alpha}, \star) \subseteq \tau_n(\s, \ast) \quad \text{ for all } \quad (\ul{\alpha}, \star) \in \Upsilon^n(\s,\ast).$$
\item \label{item2} If $n\geq 1$, the restriction $f\vert_{\tau_n(\s, \ast)}$ is injective and maps to $\tau_{n-1}(\sigma(\s), \ast)$.	
\end{enumerate}
Hence, for all $n\geq 1$, we can define the inverse branch
\begin{equation}\label{eq_inversebranch}
f^{-1,[n]}_{(\s,\ast)}\defeq \left(f\vert_{ \tau_n(\s, \ast)}\right)^{-1} \colon f(\tau_n(\s, \ast)) \to \tau_n(\s, \ast).
\end{equation}
\end{thm}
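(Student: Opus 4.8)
The strategy is to construct the neighbourhoods $\tau_n(\underline s,\ast)$ by induction on $n$, pulling back along exactly the inverse branches of $f$ that produce the canonical tails of Theorem~\ref{thm_signed}, while keeping enough control on the cyclic order of addresses to obtain the nesting property (1). The freedom in ``there is a choice of $\Addr(f)$'' is exercised at the outset: I would enlarge the Jordan domain $D$ until $P(f)\setminus I(f)\Subset D$, and then pick the arc $\delta$ so that, outside $D$, it meets neither the closures of the tracts nor the forward orbits of the (finitely many, since $S(f)\cap I(f)$ is finite) escaping critical values. Because $S(f)\subset D$ and $\mathcal W=\C\setminus(\overline D\cup\delta)$ is simply connected, $f$ restricts to a conformal isomorphism from each fundamental domain onto $\mathcal W$; and, because $P(f)\setminus I(f)$ is bounded and $J(f)\cap\AV(f)=\emptyset$, once one is far enough from the origin the only points of $P(f)$ that a pulled-back tail can ever encounter are those finitely many escaping critical values, around which the relevant branch of $f^{-1}$ is two-valued and the sign $\ast$ will pick one sheet.

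For $n=0$, put $\tau_0(\underline s,\ast)$ equal to a neighbourhood, contained in $\unbdd{F_0}\subseteq\mathcal W$, of the union of all $\gamma^0_{\underline\alpha}$ over addresses $\underline\alpha$ sharing the first symbol of $\underline s$; then $\tau_0$ depends only on that first symbol, and (1) holds at level $0$ with $\Upsilon^0(\underline s,\ast)$ the open cylinder of addresses beginning with $s_0$, on which $\tau_0$ is constant. Assume now $\tau_{n-1}$ has been defined for all signed addresses. By Theorem~\ref{thm_signed}\ref{item:tails_bijection}, $f$ carries $\gamma^n_{(\underline s,\ast)}$ bijectively onto $\gamma^{n-1}_{(\sigma(\underline s),\ast)}\subseteq\tau_{n-1}(\sigma(\underline s),\ast)$, and this is realised by a definite inverse branch of $f$, namely the one through the fundamental domain $F_0=s_0$. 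If $\gamma^n_{(\underline s,\ast)}$ carries no critical point, that branch is single-valued and injective on $\tau_{n-1}(\sigma(\underline s),\ast)$ (shrinking the latter near $\infty$ if needed, which causes no harm), and I let $\tau_n(\underline s,\ast)$ be its image. If $\gamma^n_{(\underline s,\ast)}$ does carry a critical point $c$ --- which, by the choice of alphabet and finiteness of $S(f)\cap I(f)$, occurs only at preimages of the escaping critical values and only finitely many times along the entire tail $\Gamma(\underline s,\ast)$ --- then I instead take $\tau_n(\underline s,\ast)$ to be the \emph{one-sided} half-neighbourhood obtained by pulling back a slit neighbourhood of $f(c)$: the component lying on the side of $c$ towards which addresses increase if $\ast=+$ and decrease if $\ast=-$, which is exactly the side from which $\gamma^n_{(\underline s,\ast)}$ approaches $c$. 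In both cases $f|_{\tau_n(\underline s,\ast)}$ is injective with image in $\tau_{n-1}(\sigma(\underline s),\ast)$, which is (2), and the inverse branch \eqref{eq_inversebranch} is the inverse of this restriction.

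The heart of the proof is (1) at level $n$, which I would obtain by transporting $\Upsilon^{n-1}$ backwards through the branch through $F_0$. For $(\underline\alpha,\star)$ sufficiently close to $(\underline s,\ast)$ in the cyclic order, $\underline\alpha$ and $\underline s$ share a long common prefix; in particular $\alpha_0=s_0$, so $\tau_n(\underline\alpha,\star)$ is built with the same $F_0$-branch as $\tau_n(\underline s,\ast)$, while $\sigma(\underline\alpha)$ lies close to $\sigma(\underline s)$, so one can arrange $(\sigma(\underline\alpha),\star)\in\Upsilon^{n-1}(\sigma(\underline s),\ast)$; applying the common branch to $\tau_{n-1}(\sigma(\underline\alpha),\star)\subseteq\tau_{n-1}(\sigma(\underline s),\ast)$ gives $\tau_n(\underline\alpha,\star)\subseteq\tau_n(\underline s,\ast)$. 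That the set of such $(\underline\alpha,\star)$ contains an open interval around $(\underline s,\ast)$ follows from continuity and openness of $\sigma$ for $\tau_A$ and from cylinders being open intervals. The delicate point is the critical case: there $\tau_n(\underline s,+)$ is a one-sided neighbourhood, so $\Upsilon^n(\underline s,+)$ must be chosen to lie entirely on the $+$ side of $(\underline s,+)$, i.e.\ to omit $(\underline s,-)$ --- which is admissible precisely because, by \eqref{eq_linear_addr}, $(\underline s,-)$ is the immediate predecessor of $(\underline s,+)$ in $<_A$, so an interval $((\underline s,-),\,\cdot\,)$ is already a basic $\tau_A$-neighbourhood of $(\underline s,+)$; symmetrically for $\Upsilon^n(\underline s,-)$. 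I expect the main obstacle to be exactly this reconciliation --- making the finitely many branch points along each $\gamma^n$, the $\pm$-refinement of the address order, and the inductively shrinking intervals $\Upsilon^n$ all mutually consistent, simultaneously for every $n$ --- whereas, once the one-sided conventions are fixed coherently, what remains is a routine induction resting on conformality of $f$ over $\mathcal W$ and the continuity and openness of the shift.
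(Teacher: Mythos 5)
There is nothing in this paper to compare your argument against: Theorem \ref{thm_inverse_hand} is not proved here at all, but imported verbatim from \cite[Proposition 4.7 and Theorem 4.8]{mio_signed_addr}. Judged on its own terms, your blueprint (induction on $n$, pulling $\tau_{n-1}(\sigma(\s),\ast)$ back along the branch determined by the first symbol, one-sided ``slit'' neighbourhoods where $\gamma^n_{(\s,\ast)}$ passes through a critical point) is the natural one and is surely the right shape; the problem is that the two load-bearing steps are asserted rather than proved.

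The genuine gap is the inductive verification of property \ref{item1} in the critical case. Your key sentence --- ``applying the common branch to $\tau_{n-1}(\sigma(\ul\alpha),\star)\subseteq\tau_{n-1}(\sigma(\s),\ast)$ gives $\tau_n(\ul\alpha,\star)\subseteq\tau_n(\s,\ast)$'' --- is exactly what is unavailable when $\gamma^n_{(\s,\ast)}$ meets a critical point $c$: there is then no single-valued branch of $f^{-1}$ on a neighbourhood of $f(c)$, $\tau_n(\s,\ast)$ is only a one-sided set, and for a nearby address $\ul\alpha$ whose tail misses $c$ the set $\tau_n(\ul\alpha,\star)$ is produced by a genuinely different local branch. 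Knowing $f(\tau_n(\ul\alpha,\star))\subseteq\tau_{n-1}(\sigma(\s),\ast)=f(\tau_n(\s,\ast))$ does not yield $\tau_n(\ul\alpha,\star)\subseteq\tau_n(\s,\ast)$, because the preimage of $\tau_{n-1}(\sigma(\s),\ast)$ near $c$ has $\deg(f,c)$ sheets, and one must prove that the curves and neighbourhoods of \emph{every} address in a whole one-sided interval about $(\s,\ast)$ stay on the sheet containing $\gamma^n_{(\s,\ast)}$; this correspondence between the cyclic order of signed addresses at infinity and the local arrangement of pulled-back curves around $c$ is the actual content of the cited result, and your proposal only addresses the single address $(\s,-)$ versus $(\s,+)$, not the nearby addresses on the wrong side. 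Secondly, the topological facts you invoke --- that cylinders are open intervals of $\tau_A$ and that $\sigma$ is continuous and open --- are not automatic for the cyclic-order topology on $\Addr(f)_\pm$: the alphabet is infinite, $\Addr(f)$ contains only realized addresses, and a cylinder need not be a neighbourhood of its extremal addresses. Arranging for such statements to hold is precisely where the stipulated freedom ``there is a choice of $\Addr(f)$'' must be spent, whereas you spend that freedom only on keeping $\delta$ away from the postsingular orbits, which by itself does not settle either issue.
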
 

\begin{observation}[Chains of inverse branches {\cite[Observation 4.7]{mio_signed_addr}}]\label{obs_chain_inverse} Following Theorem~\ref{thm_inverse_hand}, for each $n\geq 0$ and $(\s, \ast)\in \Addr(f)_\pm$, we denote
$$f^{-n}_{(\s,\ast)}\defeq \left(f^n\vert_{ \tau_n(\s, \ast)}\right)^{-1} \colon f^n(\tau_n(\s, \ast)) \to\tau_n(\s, \ast).$$
Then, by Theorem~\ref{thm_inverse_hand}\ref{item2}, the following chain of embeddings holds:
$$ \tau_n(\s, \ast)\xhookrightarrow{ \; \;\;\; f \;\;\; \; } \tau_{n-1}(\sigma(\s), \ast)\xhookrightarrow{\; \;\; \; f\;\; \; \; } \tau_{n-2}(\sigma^2(\s), \ast) \xhookrightarrow{ \;\;\; \; f\;\;\; \; } \cdots \xhookrightarrow{\; \;\; \; f \; \; \;\; } \tau_{0}(\sigma^{n}(\s), \ast). $$
This means that we can express the action of $f^{-n}_{(\s, \ast)}$ in $f^n(\tau_n(\s, \ast))$ as a composition of functions defined in \eqref{eq_inversebranch}. That is, 
$$ \tau_n(\s, \ast)\xleftarrow{ f^{-1,[n]}_{(\s, \ast)} } f(\tau_n(\s, \ast))\xleftarrow{ f^{-1, [n-1]}_{(\sigma(\s), \ast)} } f^2(\tau_n(\s, \ast)) \xleftarrow{ f^{-1, [n-2]}_{(\sigma^2(\s), \ast)} } \cdots \xleftarrow{ f^{-1, [1]}_{(\sigma^{n-1}(\s), \ast)} } f^n(\tau_n(\s, \ast)). $$
More precisely,
$$f^{-n}_{(\s,\ast)} \equiv \left(f^{-1, [n]}_{(\s, \ast)}\circ f^{-1, [n-1]}_{(\sigma(\s), \ast)} \circ \cdots \circ f^{-1, [1]}_{(\sigma^{n-1}(\s), \ast)}\right)\!\Big\vert_{f^n(\tau_n(\s, \ast))}.$$
Moreover, combining this with Theorem~\ref{thm_inverse_hand}, for all $(\ul{\alpha}, \star) \in \Upsilon^n(\s,\ast)$,
$$f^n(\tau_n(\ul{\alpha}, \star))\subseteq f^n(\tau_n(\s, \ast)) \quad \text{ and } \quad f^{-n}_{(\s,\ast)}\big\vert_{f^n(\tau_n(\ul{\alpha}, \star))}\equiv f^{-n}_{(\ul{\alpha},\star)}\big\vert_{f^n(\tau_n(\ul{\alpha}, \star))}.$$ 
Finally, note that by Theorem~\ref{thm_signed}\ref{item:tails_bijection}, $f^{-n}_{(\s,\ast)}\colon \gamma^0_{(\sigma^n(\s), \ast)}\to \gamma^n_{(\s, \ast)}$ is a bijection.
\end{observation}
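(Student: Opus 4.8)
The plan is to unwind the definition of $f^{-n}_{(\s, \ast)}$ and reduce every assertion to Theorems \ref{thm_signed} and \ref{thm_inverse_hand}, which we may use freely. The only content beyond those theorems is the bookkeeping of how the shift $\sigma$ acts on the address indices as one moves along the tower of inclusions; once that is tracked carefully, each assertion becomes a formal statement about inverses of compositions of injective maps.

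First I would establish well-definedness together with the chain of embeddings. By Theorem \ref{thm_inverse_hand}\ref{item2}, for every $k$ with $1\leq k\leq n$ the restriction $f\vert_{\tau_k(\sigma^{n-k}(\s), \ast)}$ is injective and maps into $\tau_{k-1}(\sigma^{n-k+1}(\s), \ast)$. Reading these inclusions successively for $k=n, n-1, \dots, 1$ produces exactly the displayed tower $\tau_n(\s, \ast)\hookrightarrow \tau_{n-1}(\sigma(\s), \ast)\hookrightarrow\cdots\hookrightarrow \tau_0(\sigma^n(\s), \ast)$, and exhibits $f^n\vert_{\tau_n(\s, \ast)}$ as a composition of $n$ injective maps; hence it is injective, so its inverse $f^{-n}_{(\s, \ast)}$ is a well-defined bijection $f^n(\tau_n(\s, \ast))\to\tau_n(\s, \ast)$. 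Taking inverses of this factorization in the opposite order and using \eqref{eq_inversebranch} to identify $(f\vert_{\tau_k(\sigma^{n-k}(\s), \ast)})^{-1}$ with $f^{-1, [k]}_{(\sigma^{n-k}(\s), \ast)}$ then yields the claimed identity $f^{-n}_{(\s,\ast)}\equiv (f^{-1, [n]}_{(\s, \ast)}\circ\cdots\circ f^{-1, [1]}_{(\sigma^{n-1}(\s), \ast)})\vert_{f^n(\tau_n(\s, \ast))}$. The one point to verify is that each single-step branch is applied to a set contained in its domain $f(\tau_k(\sigma^{n-k}(\s), \ast))$, which is exactly what the tower of embeddings guarantees termwise.

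Next, for the agreement on subintervals, let $(\ul{\alpha}, \star)\in \Upsilon^n(\s, \ast)$. Theorem \ref{thm_inverse_hand}\ref{item1} gives $\tau_n(\ul{\alpha}, \star)\subseteq\tau_n(\s, \ast)$, and applying the injective map $f^n\vert_{\tau_n(\s, \ast)}$ yields $f^n(\tau_n(\ul{\alpha}, \star))\subseteq f^n(\tau_n(\s, \ast))$. For $x$ in the smaller image, both $f^{-n}_{(\s, \ast)}(x)$ and $f^{-n}_{(\ul{\alpha}, \star)}(x)$ lie in $\tau_n(\s, \ast)$ and are mapped to $x$ by $f^n$; injectivity of $f^n$ on $\tau_n(\s, \ast)$ forces them to coincide, which is the asserted equality of restrictions. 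Finally, for bijectivity on the initial tail, Theorem \ref{thm_signed}\ref{item:tails_bijection} gives that $f$ restricts to a bijection $\gamma^k_{(\sigma^{n-k}(\s), \ast)}\to\gamma^{k-1}_{(\sigma^{n-k+1}(\s), \ast)}$ for each $1\leq k\leq n$; composing these, $f^n$ restricts to a bijection $\gamma^n_{(\s, \ast)}\to\gamma^0_{(\sigma^n(\s), \ast)}$, and since $\gamma^n_{(\s, \ast)}\subseteq\tau_n(\s, \ast)$ its inverse is $f^{-n}_{(\s, \ast)}\vert_{\gamma^0_{(\sigma^n(\s), \ast)}}$, so this map is a bijection onto $\gamma^n_{(\s, \ast)}$.

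I expect no serious obstacle: the observation is pure bookkeeping layered on Theorems \ref{thm_signed} and \ref{thm_inverse_hand}. The only place demanding attention is the index arithmetic for the shift along the tower --- ensuring that the $k$-th single-step inverse branch in the composition carries the address $\sigma^{n-k}(\s)$ rather than some other power of $\sigma$ applied to $\s$ --- together with comparing the nested images $f^n(\tau_n(\cdot,\cdot))$ on the correct (smaller) set when invoking Theorem \ref{thm_inverse_hand}\ref{item1}.
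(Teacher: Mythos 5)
Your proposal is correct and amounts to exactly the routine unwinding the paper intends: the observation is stated without proof (imported from the companion paper) as a direct consequence of Theorem \ref{thm_inverse_hand} and Theorem \ref{thm_signed}\ref{item:tails_bijection}, and your index bookkeeping for the shifted addresses, the domain checks along the tower, and the injectivity argument for the agreement of branches on $\Upsilon^n(\s,\ast)$ all match that intended derivation. Nothing further is needed.
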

\section{Cantor bouquets and the class $\CB$}\label{sec_CB}

This section gathers the definitions and results from \cite{mio_newCB} that we need for our purposes. First, we adopt the definition of Cantor bouquet from \cite{AartsOversteegen,lasseBrushing}.
\begin{defn}[Straight brush, Cantor bouquet] \label{def_brush}
A subset $B$ of $[0,+\infty) \times (\R\setminus\Q)$ is a \emph{straight brush} if the following properties are satisfied:
\begin{itemize}
\item The set $B$ is a closed subset of $\R^2$.
\item For each $(x,y) \in B$, there is $t_y \geq 0$ so that $\{x: (x,y)\in B\} = [t_y, +\infty)$. 
The set $[t_y, +\infty) \times \{y\}$ is called the \textit{hair} attached at $y$, and the point $(t_y, y)$ is called its \textit{endpoint}.
\item The set $\{y: (x,y) \in B \text{ for some } x\}$ is dense in $\R\setminus\Q$. 
Moreover, for every $(x, y) \in B$, there exist two sequences of hairs attached respectively at $\beta_n, \gamma_n \in \R\setminus\Q$ such that $\beta_n < y < \gamma_n$, $\beta_n, \gamma_n \to y$ and $t_{\beta_n}, t_{\gamma_n} \to t_y$ as $n\to\infty$.
\end{itemize}
\noindent A \emph{Cantor bouquet} is any subset of the plane that is ambiently homeomorphic to a straight brush. A \textit{hair} (resp. \textit{endpoint}) of a Cantor bouquet is any preimage of a hair (resp. endpoint) of a straight brush under a corresponding ambient homeomorphism.
\end{defn}

\begin{observation}\label{obs_inverse_CB} Let $g\in \B$ be of disjoint type such that $J(g)$ is a Cantor bouquet. It follows from  \cite[Theorem 5.8]{lasse_arclike} that each hair $\eta$ of $J(g)$ is a dynamic ray together with its endpoint and that $g\vert_\eta$ is a bijection to another hair. Thus,  each hair of $J(f)$ but at most its endpoint belongs to $I(f)$; see \cite[Proposition~4]{mio_newCB} for details. 	
\end{observation}

In \cite{mio_newCB}, the following class of criniferous functions in $\B$ is introduced:
\begin{defn}[The class $\CB$] We say that $f\in \B$ belongs to the \textit{class $\CB$} if $J(\lambda f)$ is a Cantor bouquet for some\footnote{and thus all for which $\lambda f$ is of disjoint type, see \cite[Proposition 5]{mio_newCB}.} $\vert \lambda \vert $ sufficiently small.
\end{defn}

This natural class of criniferous functions includes all those that are a finite composition of functions of finite order in $\B$; see \cite[Proposition 6]{mio_newCB}. 

The following theorem gathers together all the results on function in $\CB$ we use in \S \ref{sec_semiconj}, and is a slightly modified version of \cite[Theorem~4.6]{mio_newCB}. In a broad sense, Theorem \ref{thm_CB} follows from the combination of two results: on one hand, \cite[Theorem~3.2]{lasseRigidity}  provides us with a conjugacy near infinity between any $f\in \B$ and a disjoint type $g(z)\defeq f(\lambda z)$. On the other hand, since $f\in \CB$, it is possible to project continuously the Cantor bouquet $J(g)$ to a forward invariant Cantor bouquet $X\subset J(g)$ where Rempe's conjugacy is defined, in such a way that the properties listed below hold. 
\begin{thm}\label{thm_CB} Let $f\in \CB$ and let $L>0$ so that $S(f)\subset \D_L$. Then, $f$ is criniferous, and there exists a disjoint type map $g(z)\defeq f(\lambda z)$ for some $\lambda\defeq \lambda(L) \in \C\setminus \{0\}$, with a Cantor bouquet $X\subset J(g)$, and a continuous map $\theta \colon J(g)\rightarrow J(f)$ with the following properties: 
\begin{enumerate}[label=(\alph*)]
\item \label{item:inclusions}$ \theta(J(g)) \cup J(g)\subset \C\setminus \D_L$;
\item \label{item:commute} $\theta \circ g =f\circ \theta $, except in a bounded set $B\subset \C\setminus X$;
\item \label{item:boundedC} there is a bounded set $C$ so that if $z\in B$, then the curve $[\theta(g(z)), f(\theta(z))]$ belongs to $C\cap f^{-1}(\C \setminus \D_L)$.
\item \label{item:homeomX} $\theta\vert_X$ is a homeomorphism onto its image;
\item \label{item:imageBouquet} $\theta(J(g))$ is a forward invariant Cantor bouquet so that $I(f)\subset \Orb^-(\theta(J(g)))$;
\item Each hair of $\theta(J(g))$ is either a ray tail or a dynamic ray with its endpoint;\label{item:rays}
\item  \label{item:annulus} there is 
$M\defeq M(L)>0$ such that for every $z \in J(g)$, $\theta(z) \in \overline{A(M^{-1}\vert z\vert, M\vert z\vert)}$; in particular, $\theta(I(g))\subset I(f)$;
\item \label{item:addr} the map $\theta$ establishes an order-preserving one-to-one correspondence between external addresses of $g$ and $f$;
\item \label{item:Ig} $I(g)\subset \Orb^-(X)$.
\end{enumerate}
\end{thm}
\begin{proof}
 That $f$ is criniferous is stated in \cite[Theorem~4.6 (e)]{mio_newCB}. In \cite[Definition~1.5]{mio_newCB}, a Cantor bouquet $X\subset J(g)$ is defined to be \textit{strongly absorbing} if, in particular, it is forward invariant and $I(g)$ is contained in its backwards orbit. The statement in \cite[Theorem~4.6]{mio_newCB} says that $X\subset J(g)$ is strongly absorbing, and so \ref{item:Ig} holds. Items \ref{item:inclusions}-\ref{item:addr} correspond to items \ref{item:inclusions}-\ref{item:addr} in \cite[Theorem~4.6]{mio_newCB}, noting that \ref{item:imageBouquet} is a consequence of \cite[Theorem~4.6(e)]{mio_newCB}, that  states that $\theta(J(g))$ is strongly absorbing.
\end{proof}

\begin{observation}\label{obs_CB_conf} By Theorem~\ref{thm_CB}\ref{item:imageBouquet},\ref{item:rays},\ref{item:addr}, the hairs of $\theta(J(g))$ are a valid initial configuration for $f$ in the sense of Definition \ref{def_initial_conf}.
\end{observation}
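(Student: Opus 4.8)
The plan is to read off the three requirements of Definition~\ref{def_initial_conf} from the cited conclusions of Theorem~\ref{thm_CB}. Let $g$, $X\subseteq J(g)$ and $\theta$ be as furnished there. Since $g$ is of disjoint type, by~\eqref{eq_continuadisj} we have $J(g)=\bigcup_{\hat\s\in\Addr(g)}J_{\hat\s}$ disjointly, and, by Observation~\ref{obs_inverse_CB}, each $J_{\hat\s}$ is a hair of $J(g)$ (a dynamic ray together with its endpoint); thus the connected components of $X$ are the nonempty sets $X\cap J_{\hat\s}$, and every point of $X\cap J_{\hat\s}$ has external address $\hat\s$. Write $\iota\colon\Addr(g)\to\Addr(f)$ for the order-preserving bijection of Theorem~\ref{thm_CB}\ref{item:addr}; it carries a point of $g$-address $\hat\s$ to a point of $f$-address $\iota(\hat\s)$, and it commutes with the shift $\sigma$ (being induced by a bijection of the underlying alphabets of fundamental domains; equivalently, $\iota\circ\sigma=\sigma\circ\iota$ is forced by $f\circ\theta=\theta\circ g$ together with uniqueness of external addresses, Observation~\ref{obs_addrz}). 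For each $\s=\iota(\hat\s)\in\Addr(f)$ I would then set $\gamma^0_\s\defeq\theta(X\cap J_{\hat\s})$.

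Because $\theta|_X$ is a homeomorphism onto its image (Theorem~\ref{thm_CB}\ref{item:homeomX}) and $X\cap J_{\hat\s}$ is a hair of $X$, each $\gamma^0_\s$ is exactly a hair of the Cantor bouquet $\theta(X)$, hence, by Theorem~\ref{thm_CB}\ref{item:rays}, a ray tail or a dynamic ray possibly with its endpoint. Moreover, since every point of $X\cap J_{\hat\s}$ has $g$-address $\hat\s$, the address correspondence~\ref{item:addr} --- combined, for the escaping points, with the annulus estimate~\ref{item:annulus} --- shows that $\theta$ maps $X\cap J_{\hat\s}$ into the set of points of $f$-address $\s$, i.e.\ $\gamma^0_\s\subset J_\s$; should the endpoint of $\gamma^0_\s$ fail to escape and its membership in $J_\s$ be inconvenient to check directly, one may simply discard it, as Definition~\ref{def_initial_conf} permits a dynamic ray without its endpoint. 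This verifies the precondition of Definition~\ref{def_initial_conf}.

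It remains to check the two conditions of Definition~\ref{def_initial_conf}. For \emph{forward invariance}, fix $\s=\iota(\hat\s)$. Since $X$ is disjoint from the exceptional bounded set $B$ of Theorem~\ref{thm_CB}\ref{item:commute}, we have $f\circ\theta=\theta\circ g$ on $X\cap J_{\hat\s}$, so $f(\gamma^0_\s)=\theta\bigl(g(X\cap J_{\hat\s})\bigr)$ is connected and, by forward invariance of $\theta(X)$ (Theorem~\ref{thm_CB}\ref{item:imageBouquet}), contained in $\theta(X)$; hence it lies in a single hair of $\theta(X)$. Picking any escaping $x\in X\cap J_{\hat\s}$, we have $g(x)\in J_{\sigma(\hat\s)}$, so $f(\theta(x))=\theta(g(x))$ has $f$-address $\iota(\sigma(\hat\s))=\sigma(\iota(\hat\s))=\sigma(\s)$; therefore that hair is $\gamma^0_{\sigma(\s)}$, i.e.\ $f(\gamma^0_\s)\subset\gamma^0_{\sigma(\s)}$. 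For the \emph{covering} condition, $X\subseteq J(g)=\bigcup_{\hat\s}J_{\hat\s}$ gives $\bigcup_{\s\in\Addr(f)}\gamma^0_\s=\theta(X)$, and consequently
\[
\bigcup_{\s\in\Addr(f)}\ \bigcup_{n\geq0}f^{-n}(\gamma^0_\s)\;=\;\bigcup_{n\geq0}f^{-n}\bigl(\theta(X)\bigr)\;=\;\Orb^-(\theta(X))\;\supseteq\;I(f),
\]
the last inclusion being the second assertion of Theorem~\ref{thm_CB}\ref{item:imageBouquet}.

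I do not expect a genuine obstacle here --- the content really is just the three cited items of Theorem~\ref{thm_CB} --- but two bookkeeping points deserve care. First, one must know that \emph{every} $\s\in\Addr(f)$ occurs as $\iota(\hat\s)$ with $X\cap J_{\hat\s}\neq\emptyset$, i.e.\ that $X$ meets every hair of $J(g)$, so that $\{\gamma^0_\s\}_{\s\in\Addr(f)}$ is defined for all $\s$; this is ensured by the construction of $X$ in~\cite{mio_newCB} (and is what makes~\ref{item:addr} a correspondence of \emph{all} external addresses). Second, one should confirm that the $f$-address is constant along the whole curve $\gamma^0_\s$, endpoint included: away from the bounded region where the semiconjugacy fails this is immediate, near it one invokes Theorem~\ref{thm_CB}\ref{item:commute}--\ref{item:boundedC} to see that the coarse combinatorics (which fundamental domain each orbit visits) are nonetheless preserved, and in any case the point is sidestepped by discarding non-escaping endpoints as noted above.
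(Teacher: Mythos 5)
Your verification is correct and follows the paper's own (unstated) reasoning: the paper offers no proof beyond citing Theorem~\ref{thm_CB}\ref{item:imageBouquet},\ref{item:rays},\ref{item:addr}, and your argument simply unpacks those items (plus \ref{item:commute} and \ref{item:homeomX} for bookkeeping) to check the conditions of Definition~\ref{def_initial_conf}. The two caveats you flag (every address of $f$ is realized by a hair of $\theta(X)$, and addresses are constant along hairs) are indeed supplied by the construction in \cite{mio_newCB} underlying Theorem~\ref{thm_CB}, so there is no gap.
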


To conclude, we note that for any disjoint type map $g$ for which Theorem \ref{thm_CB}\ref{item:Ig} holds, it is possible to write the connected components of its escaping set as a nested union of preimages of those in the Cantor bouquet $X\subset J(g)$:

\begin{observation}\label{obs_betas}
Suppose that $\Addr(g)$ has been defined with respect to some alphabet of fundamental domains $\{F_i\}_{i\in I}.$ For each $\s=F_0 F_1 F_2\dots \in \Addr(g)$ and $n\in \N_{\geq 1}$, we denote 
\[ g_\s^n \defeq g\vert_{F_{n-1}}\circ g\vert_{F_{n-2}} \circ \cdots \circ g\vert_{F_{0}} \quad \text{ and } \quad  g^{-n}_\s\defeq \left(g^{n}_\s\right)^{-1}.\]
Let $X\subset J(g)$ be a Cantor bouquet such that $I(g)\subset \Orb^-(X)$, and for each $\s\in \Addr(g)$ and $n\geq 0$, let 
\begin{equation}\label{eq_Isbeta}
I_\s\defeq J_\s\cap I(g), \quad X_\s\defeq X\cap I_\s \quad \text{ and } \quad \beta^n_\s\defeq g_{\s}^{-n}(X_{\sigma^n(\s)}).
\end{equation}
Then, by Observation \ref{obs_addrz},
\[I_\s=\bigcup_{n\geq 0} \beta^n_\s \quad \text{ and } \quad g^n\colon \beta^n_\s \to X_{\sigma^n(\s)} \text{ is a bijection for all } n\geq 0.  \]
\end{observation}

\section{Strongly postcritically separated maps}\label{sec_postsep}

Recall that Theorem~\ref{thm_main_intro} holds for those maps in $\CB$ that are additionally strongly postcritically separated. The reason for it is that for the latter class of maps, postsingular points in their Julia set are ``sufficiently spread'' as to guarantee that the maps \textit{expand} an orbifold metric that sits on a neighbourhood of their Julia set, see Theorem~\ref{thm_orbifolds}. In this section, we include the main definitions and results from \cite{mio_orbifolds} that we require, and refer to that paper for more details.

Recall that for a holomorphic map $f:\widetilde{S}\rightarrow S$ 
between Riemann surfaces, the \emph{local degree} of $f$ 
at a point $z_0\in \widetilde{S}$, denoted by $\deg(f,z_0)$, is the unique integer $n\geq 1$ 
such that the local power series development of $f$ is of the form
\begin{equation*}
f(z)=f(z_0) + a_n (z-z_0)^n + \text{(higher terms)},
\end{equation*}
where $a_n\neq 0$. We say that $f$ has \textit{bounded criticality} on a set $A$ if $\AV(f) \cap A=\emptyset$ and there exists a constant $M<\infty$ such that $\operatorname{deg}(f,z)<M \text{ for all } z \in A.$

\begin{defn}[Strongly postcritically separated functions]\label{def_strongps}
A transcendental entire map $f$ is \textit{strongly postcritically separated} if there exist constants $c,\epsilon>0$ such that
\begin{enumerate}[label=(\alph*)]
\item \label{item_Fatou} $P(f)\cap F(f)$ is compact;
\item \label{itema_defsps} $f$ has bounded criticality on $J(f)$;
\item \label{itemb_defsps} for each $z\in J(f)$, $\#(\Orb^+(z)\cap \Crit(f)) \leq c$;
\item \label{itemd_defsps} for all distinct $z,w\in P_J\defeq P(f)\cap J(f)$, $\vert z-w\vert\geq \epsilon \max\{\vert z \vert, \vert w \vert\}$.
\end{enumerate}
\end{defn}

\begin{observation}[Dichotomy of points in $P_J$, {\cite[Observation 2.2]{mio_orbifolds}}] \label{rem_setting} If $f$ strongly postcritically separated, then any $p\in P_J$ is either (pre)periodic, or it escapes to infinity. If in addition $f\in \mathcal{B}$, then there can be at most finitely many points in $S(f)\cap I(f)$, and so $P(f)\setminus I(f)$ is bounded.
\end{observation}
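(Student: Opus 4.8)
The plan is to let the separation hypothesis \ref{itemd_defsps} carry the geometric load through a crude packing estimate, and then reinsert the dynamics to settle the degenerate cases that estimate cannot see. First I would extract from \ref{itemd_defsps} the following: for any $0<r<R$, two distinct points of $P_J$ lying in $\overline{A(r,R)}$ are at Euclidean distance at least $\epsilon r$, so an area count gives $\#\bigl(P_J\cap\overline{A(r,R)}\bigr)\leq C(R/\epsilon r)^2<\infty$; equivalently, the closed set $P_J$, viewed in $\widehat{\C}$, can accumulate only at $0$ and at $\infty$. I would also note that $P(f)$ and $J(f)$ are closed and forward invariant, hence so is $P_J$, and in particular $\Orb^+(p)\subseteq P_J$ whenever $p\in P_J$.

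For the dichotomy, suppose $p\in P_J$ is neither (pre)periodic nor escaping. Non-(pre)periodicity makes $\Orb^+(p)$ infinite, and failure to escape produces a subsequence $\bigl(f^{n_k}(p)\bigr)_k$ of bounded modulus; if a value were repeated in it, $p$ would be (pre)periodic, so we may take these points pairwise distinct, and then they accumulate at some $q\in\C$. If $q\neq 0$, a closed annulus around $q$ missing $0$ contains infinitely many of them, contradicting the packing estimate. Hence every finite limit point of $\Orb^+(p)$ is $0$, i.e.\ $f^n(p)\to 0$; by continuity $f(0)=\lim_n f^{n+1}(p)=0$, so $0$ is a fixed point of $f$ lying in the closed set $P_J\subseteq J(f)$. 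But the orbit of a point of $J(f)$ cannot converge to a fixed point unless the point is an eventual preimage of it (for a repelling fixed point an orbit eventually trapped in a linearizing neighbourhood must equal it; parabolic basins and Siegel discs lie in $F(f)$; a Cremer point is not the $\omega$-limit of a single orbit) — contradicting non-(pre)periodicity of $p$. This proves the first assertion.

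For the addendum assume $f\in\B$, fix $L$ with $S(f)\subset\D_L$, and recall $I(f)\subseteq J(f)$ for class $\B$. Then $S(f)\cap I(f)\subseteq S(f)\cap J(f)\subseteq P_J$ is bounded, hence finite by the packing estimate unless it accumulates at $0$, which is excluded exactly as in the previous paragraph (pass from $z_k\to 0$ to $f(z_k)\to f(0)$ and iterate). So $S(f)\cap I(f)=\{v_1,\dots,v_m\}$ is finite, each $\Orb^+(v_i)\subseteq I(f)$ with $\overline{\Orb^+(v_i)}$ meeting $\C\setminus I(f)$ trivially, while $\Orb^+\bigl(S(f)\setminus I(f)\bigr)\cap I(f)=\emptyset$, since $f^n(s)\in I(f)$ forces $s\in I(f)$. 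Consequently every point of $P(f)\setminus I(f)$ lies in $P(f)\cap F(f)$ — compact by \ref{item_Fatou} — or in $P_J\setminus I(f)$, which by the dichotomy consists of (pre)periodic points; combining \ref{item_Fatou}, \ref{itemd_defsps}, the dichotomy and the finiteness of $S(f)\cap I(f)$ just obtained, one checks that $P_J\setminus I(f)$ is bounded (a routine verification, carried out in \cite{mio_orbifolds}), and hence $P(f)\setminus I(f)$ is bounded.

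The step I expect to be the real obstacle is precisely the accumulation at $0$ in the last two paragraphs, together with the closing boundedness claim: hypothesis \ref{itemd_defsps} constrains $P_J$ only \emph{relative to modulus}, so it gives no information near the origin, and one has to bring back continuity, forward invariance of $I(f)$, and the classification of fixed points lying in $J(f)$ to finish there. Everything else is the soft packing count and standard properties of $I(f)$, $J(f)$ and $P(f)$.
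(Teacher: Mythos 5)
There is a genuine gap, and it sits exactly where you predicted the obstacle would be. (Note the paper itself offers no proof -- it imports the statement from \cite{mio_orbifolds} -- so I am judging your argument on its own merits.) The step ``every finite limit point of $\Orb^+(p)$ is $0$, i.e.\ $f^n(p)\to 0$'' is invalid: failure to escape only rules out convergence to $\infty$, not $\infty$ as a partial limit, so the limit set of the orbit in $\widehat{\C}$ may be $\{0,\infty\}$ and the orbit may oscillate between tiny and huge moduli. Your fixed-point analysis treats only the convergent case, so the oscillating case is simply not covered; it is not vacuous, and it is precisely where condition \ref{itemd_defsps} must be used a second time. (It can be closed: your subsequence argument does give $f(0)=0$ and $0\in P_J$; then choose $\delta'<\delta$ with $f(\D_{\delta'})\subset \D_{\delta}$, note that each excursion of the orbit from $\D_{\delta'}$ to the complement of $\D_\delta$ leaves an orbit point in $\overline{A(\delta',\delta)}$, and injectivity of a non-preperiodic orbit then yields infinitely many distinct points of $P_J$ in a fixed compact annulus away from $0$, contradicting \ref{itemd_defsps}.) Even in the convergent branch your classical inputs are shaky: the assertion ``a Cremer point is not the $\omega$-limit of a single orbit'' is false as stated (Cremer points do lie in $\omega$-limit sets, e.g.\ of singular orbits); what you need is that no orbit converges to it non-trivially, which is the snail lemma (or hedgehog theory) and must be invoked, and the parabolic case needs the Leau--Fatou flower to know a non-trivially converging Julia orbit would end in an attracting petal $\subset F(f)$. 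A cleaner route avoiding all three subcases: \ref{itemd_defsps} allows only boundedly many orbit points per dyadic annulus, forcing $|f^n(p)|$ to decay geometrically, while $|f^{n+1}(p)|/|f^n(p)|\to |f'(0)|\geq 1$ because a fixed point lying in $J(f)$ cannot be attracting -- a contradiction.

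The addendum has the same problem and is not repaired by ``excluded exactly as in the previous paragraph'': the hypothetical points of $S(f)\cap I(f)$ accumulating at $0$ are \emph{escaping} and belong to infinitely many distinct orbits, so neither the dichotomy nor your single-orbit fixed-point argument applies to them; passing to $f(z_k)\to f(0)$ and iterating yields no contradiction by itself (one only learns $0$ is periodic or escaping). A separate argument is needed -- for instance, if $f^n(0)\neq 0$ for some $n$, use that $f^n$ is locally finite-to-one at $0$ to produce infinitely many distinct $P_J$-points accumulating at the nonzero point $f^n(0)$, contradicting \ref{itemd_defsps}; the residual case $f(0)=0$ again requires the excursion argument. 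Finally, the closing claim that $P_J\setminus I(f)$ is bounded is outsourced to \cite{mio_orbifolds}, which is circular here, since that citation is the very statement being proved; to finish one should also show $S(f)\cap (J(f)\setminus I(f))$ is finite (same packing issue at $0$), note that by the dichotomy these singular orbits are finite sets, that the finitely many escaping singular orbits are closed discrete subsets of $\C$, and combine this with compactness of $P(f)\cap F(f)$ from \ref{item_Fatou} to bound $P(f)\setminus I(f)$.
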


Before stating our results on expansion, for the reader's convenience, we include some basic definitions on orbifold metrics. An \emph{orbifold} is a space that is locally represented as a quotient of an open subset $S$ of $\R^n$ by a linear action of a finite group (see \cite[Chapter 13]{thurstonGeom}). For the purposes of this paper, we are only interested in orbifolds modelled over Riemann surfaces. In this case, orbifolds are conveniently totally characterized by the surface $S$ together with a map that ``marks'' a discrete set of points of $S$; see \cite[Appendix A]{mcmullen1994complex} and \cite[Chapter 19 and Appendix E]{milnor_book} for a more detailed exposition. 
\begin{defn}[Riemann orbifold, covering orbifold maps]
A \emph{(Riemann) orbifold} is a pair $(S,\nu)$ consisting of a Riemann surface $S$, called the \textit{underlying surface}, and a \emph{ramification map} $\nu \colon S\rightarrow\N_{\geq 1}$ such that the set 
\begin{equation*}
\lbrace z\in S\; :\; \nu(z)>1\rbrace
\end{equation*}
is discrete. Let $\Ort=(\widetilde{S},\tilde{\nu})$ and $\Or=(S, \nu)$ be Riemann orbifolds. A \emph{holomorphic map} $f \colon\Ort\rightarrow\Or$ is a holomorphic map $f \colon\widetilde{S}\rightarrow S$ between the underlying Riemann surfaces such that
\begin{equation}\label{eq_holom_orb}
\nu(f(z)) \text{ divides } \deg(f,z)\cdot \tilde{\nu}(z)\quad \text{ for all } \quad z\in \widetilde{S}.
\end{equation}
If in addition $f \colon\widetilde{S}\rightarrow S$ is a branched covering map such that 
\begin{equation}\label{eq_covering_orb}
\nu(f(z)) = \deg(f,z)\cdot\tilde{\nu}(z)\quad \text{ for all } \quad z\in \widetilde{S},
\end{equation}
then $f \colon\Ort\rightarrow\Or$ is an \emph{orbifold covering map}. If there exists an orbifold covering map between $\Ort$ and $\Or$, $\widetilde{S}$ is simply-connected and $\tilde{\nu}\equiv 1$, then $\Ort$ a \emph{universal covering orbifold} of $\Or$ and $f$ is a \textit{universal covering map}.
\end{defn} 

\begin{remark}With a slight abuse of notation, for points or sets $z, A\subset \C$ we will sometimes write $z, A \in \Or $ to indicate that $z$ or $A$ belong to the underlying surface of $\Or$.
\end{remark}

As a generalization of the Uniformization theorem for Riemann surfaces, with only two exceptions, every Riemann orbifold has a universal covering orbifold: 
\begin{thm}[Uniformization of Riemann orbifolds]\label{thm_uniform}
Let $\Or=(S,\nu)$ be a Riemann orbifold for which $S$ is connected. Then $\Or$ has no universal 
covering orbifold if and only if $\Or$ is isomorphic to $\widehat{\C}$ 
with signature $(l)$ or $(l,k)$, where $l\neq k$. In all other cases the 
universal cover is unique up to a conformal isomorphism over the surface $S$, and given by either $\widehat{\C}$, $\C$ or $\D$. In particular, if $S\subsetneq \C$ and $\#(\widehat{\C} \setminus S)>2$, then $\Or$ is covered by $\D$.
\end{thm}

In analogy to Riemann surfaces, we call an orbifold $\Or$ \emph{elliptic, parabolic} or \emph{hyperbolic} if all of its connected components are covered by $\widehat{\C}, \C$ or $\D$ respectively. In this paper we are only interested in hyperbolic orbifolds.

\begin{discussion}[Orbifold metric and distance] \label{orb_metric}
Theorem~\ref{thm_uniform} allows us to induce a metric on those orbifolds that have a universal cover as the pushforward of the spherical, Euclidean or hyperbolic metric of their universal cover. For our purposes, let $\Or=(S, \nu)$ be an orbifold that has universal covering surface $\D$, and let $\rho_\D(z)\vert dz\vert$ be the hyperbolic metric in $\D$. By pushing forward this metric by an orbifold covering map, we obtain a Riemannian metric on $\Or$, that we denote by $\rho_{\Or}(w)\vert dw\vert$ and call the \emph{orbifold metric} of $\Or$. The orbifold metric on $\Or$ determines a metric in the surface $S$ with singularities at the ramified points of $\Or$. More precisely, if $\nu(w_0)=m>1$ for some $w_0 \in S$, then $\rho_{\Or}(w)\vert dw\vert$ has a singularity of the type $\vert w-w_0\vert^{(1-m)/m}$ near $w_0$ in $S$.

This metric induces an \textit{$\Or$-distance} between points $x,y \in S$ in the following way. We join $x$ to $y$ by a rectifiable curve $\gamma$ in $S$, and define
the $\Or$-length $\ell_\Or(\gamma)$ of $\gamma$ by
$$\ell_\Or(\gamma) \defeq \int_\gamma \rho_{\Or}(w)\vert dw\vert.$$
Note that the integral is well-defined, since the set of ramified points in $\gamma$, and thus singularities of $\rho_{\Or}$, is finite. Finally, we set
$$d_\Or(x, y) \defeq \inf\{\ell_\Or(\gamma): \gamma \text{ is a rectifiable curve in } S \text{ joining } x \text{ and } y \}.$$
\end{discussion}

The following theorem gathers the main results on strongly postcritically separated maps that we will use, and is a compendium of \cite[Definition and Proposition 5.1, Theorem~1.1 and Lemma~5.3]{mio_orbifolds}.
\begin{thm}[Orbifold expansion for strongly postcritically separated maps] \label{thm_orbifolds} Let $f\in \B$ be a strongly postcritically separated map. Then, there exist hyperbolic orbifolds $\Ort=(\widetilde{S},\tilde{\nu})$ and $\Or=(S,\nu)$ with the following properties:
\begin{enumerate}[label=(\alph*)]
\item \label{item_a_assocorb} Either $S=\C =\tilde{S}$, or $\text{cl}(\tilde{S})\subset S=\C\setminus \overline{U}$, where $U$ is a finite union of bounded Jordan domains. Moreover, $J(f) \subset S$.
\item  \label{item_e_assocorb} $f \colon\Ort\to\Or$ is an orbifold covering map.
\item There exists a constant $\Lambda > 1$ such that
\begin{equation}\label{eq_exp_lambda}
\Vert \Deriv f(z)\Vert_{\Or}\defeq\frac{\vert f'(z)\vert\rho_{\Or}(f(z))}{\rho_{\Or}(z)} \geq\Lambda,
\end{equation}
whenever the quotient is defined.
\item \label{lem_annulus} For any fixed $K>1$, there is $R>0$ so that if $p,q\in \overline{A(t, Kt)}\subset A(t/K,t K^2) \subset S$ for some $t>0$, then $d_\Or(p,q) \leq R$. 
\end{enumerate}		
\end{thm}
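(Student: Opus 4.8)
The plan is to realise $\Or$ and $\Ort$ via the classical ``orbifold trick'', arranging that the \emph{escaping} part of the postsingular set is controlled by the separation hypothesis~\ref{itemd_defsps} rather than by puncturing. \textbf{Construction of the orbifolds.} Since $P(f)\cap F(f)$ is compact and forward invariant by~\ref{item_Fatou}, I would first choose a finite union $\overline{U}$ of bounded Jordan domains containing it, forward invariant enough that, setting $S\defeq\C\setminus\overline{U}$ (or $S\defeq\C$ if $P(f)\cap F(f)=\emptyset$), one has $J(f)\Subset S$, $\widetilde{S}\defeq f^{-1}(S)\subseteq S$, and $\mathrm{cl}(\widetilde{S})\subset S$ near $\partial U$; this will give~\ref{item_a_assocorb}. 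For the weights, I would use Observation~\ref{rem_setting}: $P_J$ consists of finitely many (pre)periodic cycles together with finitely many orbits escaping to infinity, and by~\ref{itema_defsps}--\ref{itemb_defsps} the local degree of $f$ is bounded on $J(f)$, each orbit meets $\Crit(f)$ only finitely often, and no cycle in $J(f)$ is critical (a critical periodic point would be superattracting, hence in $F(f)$). Hence the compatibility relations $\nu_\Or(f(z))=\deg(f,z)\,\nu_{\Ort}(z)$ can be solved along $P_J$ with a \emph{finite} weight function $\nu$: assign weights inductively from the cycles (where they stabilise) and from the last critical point on each escaping orbit outward, keeping $\nu$ divisible by $\deg(f,\cdot)$ at each preimage and non-decreasing towards the ``sources''. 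Setting $\Or\defeq(S,\nu)$ and letting $\Ort\defeq(\widetilde S,\tilde\nu)$ carry the pulled-back structure makes $f\colon\Ort\to\Or$ an orbifold covering map by construction, giving~\ref{item_e_assocorb}, and arranges $\tilde\nu\geq\nu|_{\widetilde S}$, so that $\Ort$ refines $\Or|_{\widetilde S}$. One then checks both orbifolds are hyperbolic, using that $f$ is transcendental, so that $\overline U\neq\emptyset$ or $P_J$ carries infinitely many genuine orbifold points (adding an auxiliary marked point in the remaining degenerate case).

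\textbf{Expansion.} An orbifold covering is a local isometry of the associated hyperbolic metrics, so $|f'(z)|\rho_\Or(f(z))=\rho_{\Ort}(z)$ wherever defined; combined with the orbifold Schwarz inequality $\rho_{\Ort}\geq\rho_\Or$ on $\widetilde S$ (from the refinement), this already yields $\Vert\Deriv f(z)\Vert_\Or=\rho_{\Ort}(z)/\rho_\Or(z)\geq 1$. To upgrade this to a uniform $\Lambda>1$ in~\eqref{eq_exp_lambda} I would split $\widetilde S$ into three pieces: on a neighbourhood of $\overline U$, the strict inclusion $\mathrm{cl}(\widetilde S)\subset S$ forces $\rho_{\Ort}/\rho_\Or\geq\Lambda_1>1$ by a normal-families argument; on a region $\{|z|>R_0\}$ I would invoke the standard Eremenko--Lyubich contraction estimate for the conformal maps from tracts onto $\C\setminus\overline{D}$, transferred to the orbifold metrics (which differ from those hyperbolic metrics only by marked-point corrections, and those are uniformly controlled at every scale by~\ref{itemd_defsps}), to get $\rho_{\Ort}/\rho_\Or\geq\Lambda_2>1$; and the intermediate compact annulus is handled by compactness. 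Then $\Lambda\defeq\min\{\Lambda_1,\Lambda_2,\dots\}>1$ works.

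\textbf{Bounded diameter of annuli.} For~\ref{lem_annulus} I would compare $\rho_\Or$ with the Euclidean metric scale by scale. The dilation $z\mapsto z/t$ carries $A(t/K,tK^2)$ onto the fixed annulus $A(1/K,K^2)$, and by~\ref{itemd_defsps} this region meets $P_J$ in at most $N=N(K,\epsilon)$ points, pairwise separated by a fixed fraction of the scale and with weights lying in a bounded range, all independently of $t$. Hence $t\,\rho_\Or(t\,\cdot)$ ranges over a \emph{compact} family of orbifold metrics on $A(1/K,K^2)$, each of which assigns $\overline{A(1,K)}$ a finite diameter; the supremum over the family is the desired uniform $R$.

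\textbf{Main obstacle.} I expect the crux to be the \emph{uniformity} of $\Lambda$ in~\eqref{eq_exp_lambda}: because $\widetilde S$ is non-compact away from $\overline U$, the bound must be assembled from two essentially different mechanisms --- the subhyperbolic/orbifold refinement near the bounded part of $P_J$, and the transcendental Eremenko--Lyubich geometry far from it --- and fusing these into a single constant is precisely where~\ref{itemd_defsps} is indispensable, since without that separation the marked points could cluster at some scale and annihilate the gap (and, likewise, destroy the scale-independence needed for~\ref{lem_annulus}). By comparison, producing a finite compatible weight function is delicate but essentially forced once the dichotomy of Observation~\ref{rem_setting} and the bounds~\ref{itema_defsps}--\ref{itemb_defsps} are in hand.
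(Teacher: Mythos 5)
You should first be aware that this paper does not prove Theorem \ref{thm_orbifolds} at all: it is stated explicitly as a compendium of results imported from \cite{mio_orbifolds} (Definition and Proposition 5.1, Theorem 1.1 and Lemma 5.3 there), so there is no internal argument to measure your sketch against; the relevant comparison is with that companion paper. At the level of strategy your outline is the expected one and matches what is done there: delete a forward-invariant neighbourhood $\overline{U}$ of the compact set $P(f)\cap F(f)$, mark the points of $P_J$ with weights of lcm type (finite and uniformly bounded thanks to Definition \ref{def_strongps}\ref{itema_defsps}--\ref{itemb_defsps}, and well defined because no cycle in $J(f)$ is critical), obtain the covering property \ref{item_e_assocorb} by pulling back, and derive expansion from ``covering plus strict refinement'' together with Eremenko--Lyubich geometry near infinity, with the separation condition \ref{itemd_defsps} supplying scale-invariant control of the escaping cone points.

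As a proof, however, the sketch is thin exactly where the cited paper does its work. First, for \ref{lem_annulus} the claim that $t\,\rho_{\Or}(t\,\cdot)$ ``ranges over a compact family of orbifold metrics on $A(1/K,K^2)$'' is not correct as stated: the density $\rho_{\Or}$ at a point of the annulus depends on the whole orbifold $(S,\nu)$, not only on the marked points inside $A(t/K,tK^2)$. You must first compare $\rho_{\Or}$ from above with the density of the suborbifold supported on $A(t/K,tK^2)$ (domain monotonicity of orbifold metrics), and only then rescale and invoke \ref{itemd_defsps} and the uniform weight bound; with that insertion the compactness argument goes through. Second, for the uniform $\Lambda$ in \eqref{eq_exp_lambda}, the parenthetical ``transferred to the orbifold metrics, whose marked-point corrections are uniformly controlled at every scale'' is precisely the substantive estimate (two-sided bounds on $\rho_{\Or}$ near escaping cone points, comparable at every scale), not a remark one can wave through; as written it is an assertion rather than an argument. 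Third, your three-piece decomposition does not cover the degenerate case $S=\C=\widetilde{S}$ allowed in \ref{item_a_assocorb}: there is then no boundary piece and no strict inclusion of domains, so expansion on bounded parts of $J(f)$ must come from the weights and the density estimates rather than from a normal-families argument near $\overline{U}$. Finally, a small repair to the weight construction: for the refinement $\tilde{\nu}\geq\nu$ you need the divisibility $\deg(f,z)\,\nu(z)\mid\nu(f(z))$ along forward orbits (the lcm-over-backward-orbits definition gives this), which is stronger than merely requiring $\deg(f,z)\mid\nu(f(z))$ as your phrasing suggests.
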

\begin{cor}[Shrinking of preimages of bounded curves {\cite[Corollary 5.4]{mio_orbifolds}}] \label{cor_uniform}  Under the assumptions of Theorem~\ref{thm_orbifolds}, for any curve $\gamma_0 \subset \Or$, for all $k\geq 1$, and each curve $\gamma_k \subset f^{-k}(\gamma_0)$ such that $f^k\vert_{\gamma_k}$ is injective, $\ell_{\Or}(\gamma_k)\leq \frac{\ell_{\Or}(\gamma_0)}{\Lambda^{k}}$ for some constant $\Lambda>1$.	
\end{cor}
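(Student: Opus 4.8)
The plan is to derive the exponential contraction directly from the pointwise orbifold-expansion estimate \eqref{eq_exp_lambda} by a telescoping application of the chain rule, integrated along the curve. Fix a rectifiable curve $\gamma_0\subset\Or$ (the assertion is vacuous when $\ell_{\Or}(\gamma_0)=\infty$), an integer $k\ge1$, and an arc $\gamma_k\subset f^{-k}(\gamma_0)$ with $f^k\vert_{\gamma_k}$ injective. First I would settle the domain bookkeeping: by Theorem~\ref{thm_orbifolds}\ref{item_a_assocorb},\ref{item_e_assocorb} we have $f^{-1}(S)\subseteq\widetilde{S}$ and $\widetilde{S}\subseteq S$, so by induction $f^{-j}(S)\subseteq\widetilde{S}\subseteq S$ for every $j\ge1$; hence each $f^i(\gamma_k)$ with $0\le i\le k$ lies in $S$ and the density $\rho_{\Or}$ is defined all along the backward orbit in question.

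Now the computation. For $z\in\gamma_k$ set $z_i\defeq f^i(z)$, so $z_0=z$ and $z_k\in\gamma_0$. Wherever the relevant quotients are defined, the chain rule and a telescoping cancellation of the density factors give
\[
\frac{\vert(f^k)'(z)\vert\,\rho_{\Or}(z_k)}{\rho_{\Or}(z_0)}=\prod_{i=0}^{k-1}\frac{\vert f'(z_i)\vert\,\rho_{\Or}(z_{i+1})}{\rho_{\Or}(z_i)}=\prod_{i=0}^{k-1}\Vert\Deriv f(z_i)\Vert_{\Or}\ \ge\ \Lambda^{k},
\]
where the inequality is \eqref{eq_exp_lambda} applied once at each $z_i$. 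Since $f^k\vert_{\gamma_k}$ is injective, $f^k(\gamma_k)$ is an arc contained in $\gamma_0$, hence of $\rho_{\Or}$-length at most $\ell_{\Or}(\gamma_0)$, and the change-of-variables formula for $\rho_{\Or}$-length yields
\[
\ell_{\Or}(\gamma_0)\ \ge\ \ell_{\Or}(f^k(\gamma_k))=\int_{\gamma_k}\vert(f^k)'(z)\vert\,\rho_{\Or}(f^k(z))\,\vert dz\vert\ \ge\ \Lambda^{k}\int_{\gamma_k}\rho_{\Or}(z)\,\vert dz\vert=\Lambda^{k}\,\ell_{\Or}(\gamma_k),
\]
which is the claim.

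The one point that genuinely needs care — and which I expect to be the main obstacle — is the caveat ``wherever the relevant quotients are defined'': $\Vert\Deriv f\Vert_{\Or}$ fails to be defined on a discrete set, namely at the critical points of $f$ and at the cone points and punctures of $\Or$ together with all their $f$-preimages, where $\rho_{\Or}$ is singular. Since each arc $f^i(\gamma_k)$ is compact and these exceptional sets are discrete, only finitely many parameters of $\gamma_k$ have forward orbit meeting them; I would split $\gamma_k$ at those parameters, apply the estimate on each resulting open subarc, and sum, using that the hyperbolic orbifold density $\rho_{\Or}$ is locally integrable across its cone points so that every arc-length integral above converges. Equivalently, the pointwise bound $\vert(f^k)'(z)\vert\,\rho_{\Or}(f^k(z))\ge\Lambda^{k}\,\rho_{\Or}(z)$ holds off a subset of $\gamma_k$ of linear measure zero, which is all the integral inequality requires. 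Everything else is the routine telescoping displayed above, so no further ideas are needed.
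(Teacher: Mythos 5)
Your derivation is, in substance, the right one — and the one the result rests on: this paper does not prove Corollary~\ref{cor_uniform} but imports it from \cite[Corollary~5.4]{mio_orbifolds}, and that corollary is precisely the integrated form of the pointwise expansion \eqref{eq_exp_lambda}. Telescoping the chain rule to get $\vert(f^k)'(z)\vert\,\rho_{\Or}(f^k(z))\geq \Lambda^k\rho_{\Or}(z)$ off the exceptional set, using injectivity of $f^k\vert_{\gamma_k}$ only to compare $\ell_{\Or}(f^k(\gamma_k))$ with $\ell_{\Or}(\gamma_0)$, and then integrating, is the standard argument (equivalently one does the case $k=1$ and inducts). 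Your treatment of the singular set is also essentially fine, with one small correction: you assert the arcs $f^i(\gamma_k)$ are compact, which need not hold (in the application they are pieces of ray tails, hence unbounded); but this is harmless, since the exceptional set (critical points, marked points of $\Or$ and their relevant preimages) is closed and discrete in $\C$, so its intersection with any curve is countable and of zero linear measure, which is all the integral inequality requires.

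The one step whose justification does not go through as written is the domain bookkeeping. From Theorem~\ref{thm_orbifolds}\ref{item_e_assocorb}, the covering property gives $f(\widetilde{S})\subseteq S$, i.e.\ $\widetilde{S}\subseteq f^{-1}(S)$ — the \emph{reverse} of the inclusion $f^{-1}(S)\subseteq\widetilde{S}$ you claim to deduce from items \ref{item_a_assocorb} and \ref{item_e_assocorb}. What your telescoping genuinely needs is that the intermediate images $f^i(\gamma_k)$, $0\leq i\leq k$, lie in $S$, so that $\rho_{\Or}$ is defined (off a null set) along the whole backward orbit of the curve; knowing only that $\gamma_0\subset S$ and that $\ell_{\Or}(\gamma_k)$ makes sense does not give this. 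The missing fact is true, but it comes from the construction in \cite{mio_orbifolds} — there $\widetilde{\Or}$ is the pullback orbifold on $f^{-1}(S)$ and the removed set $\overline{U}$ is chosen forward invariant, so $f^{-1}(S)\subseteq S$ — not from the two items of Theorem~\ref{thm_orbifolds} as quoted in this paper. With that fact supplied from the source, your proof is correct and coincides with the intended one.
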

Note that the preceding corollary is meaningful only when applied to some $\gamma_0$ of bounded orbifold length. We might not be able to guarantee so in all cases we wished, so, instead, we will consider rectifiable curves in the same ``sort of homotopy class'' in the following sense. 
\begin{discussion}[Definition of the sets $\mathcal{H}^{q}_{p} (W(k))$] Let us fix an entire function $f$ and let $k\in \N$. We suggest the reader keeps in mind the case when $k=0$, since it will be the one of greatest interest for us. Let $W(k)$ be a finite set of (distinct) points in $f^{-k}(P(f))$, totally ordered with respect to some relation ``$\prec$''. That is, $W(k)\defeq(W(k), \prec)=\{w_1, \ldots, w_N \}\subset f^{-k}(P(f))$ such that $w_{j-1}\prec w_j\prec w_{j+1}$ for all $ 1< j< N$. We note that $W(k)$ can be the empty set. Then, for every pair of points\footnote{In particular, $p$ and $q$ might belong to $f^{-k}(P(f))$.} $p,q \in \C\setminus W(k)$, we denote by $\mathcal{H}^{q}_{p} (W(k))$ the collection of all curves in $\C$ with endpoints $p$ and $q$ that join the points in $W(k)$ \textit{in the order} ``$\prec $'', starting from $p$. More formally, $\gamma\in \mathcal{H}^{q}_{p} (W(k))$ if $\text{int}(\gamma)\cap f^{-k}(P(f))=W(k)$ and $\gamma$ can be parametrized so that $\gamma(0)=p$, $\gamma(1)=q$ and $\gamma(\frac{j}{N+1})=w_j$ for all $1\leq j \leq N$. In particular, $\gamma$ can be expressed as a concatenation of $N+1$ curves 
\begin{equation}\label{def_concat}
\gamma=\gamma^{w_{1}}_{p} \bm{\cdot}\gamma^{w_{2}}_{w_1} \bm{\cdot} \cdots \bm{\cdot}\gamma^{q}_{w_N},
\end{equation}
each of them with endpoints in $W(k) \cup\{p,q\}$ and such that $$\text{int}(\gamma^{w_1}_{p}), \text{int}(\gamma^{w_{i+1}}_{w_i}), \text{int}(\gamma^{q}_{w_N}) \subset \C \setminus f^{-k}(P(f))$$
for each $1\leq i \leq N$.
\end{discussion}

\noindent We use the following notion of homotopy for the sets of curves described:
\begin{defn}[Post-$k$-homotopic curves] \label{def_post0}
Consider $W(k)=\{w_1, \ldots, w_N \}\subset f^{-k}(P(f))$ and two curves $\gamma,\beta\in \mathcal{H}^{w_{N+1}}_{w_0}(W(k))$, for some $\{w_0, w_{N+1}\}\subset \C\setminus W(k)$. We say that $\gamma$ is \emph{post-$k$-homotopic to $\beta$} if for all $0 \leq i\leq N$, $\gamma^{w_{i+1}}_{w_i}$ is homotopic to $\beta^{w_{i+1}}_{w_i} \text{ in } (\C \setminus f^{-k}(P(f))) \cup \{w_i, w_{i+1} \}$.
\end{defn}

In other words, for each $1\leq i\leq N$, the restrictions of $\gamma$ and $\beta$ between $w_i$ and $w_{i+1}$ are homotopic in the space $(\C \setminus f^{-k}(P(f))) \cup \{w_i, w_{i+1}\}$. It is easy to see that this defines an equivalence relation in $\mathcal{H}^q_p (W(k))$, with $p=w_0$ and $q=w_{N+1}$. For each $\gamma\in \mathcal{H}^q_p (W(k))$, we denote by $[\gamma]_{_k}$ its equivalence class. Note that if $W(k)=\emptyset$ and $p,q \in \C \setminus f^{-k}(P(f))$, then for any curve $\gamma\in\mathcal{H}^q_p (W(k))$, $[\gamma]_{_k}$ equals the equivalence class of $\gamma$ in $\C \setminus f^{-k}(P(f))$ in the usual sense. Moreover, if $\gamma$ is any curve that meets only finitely many elements of $f^{-k}(P(f))$, then it belongs to a unique set of the form ``$\mathcal{H}^q_p(W(k))$'' up to reparametrization of $\gamma$, and so its equivalence class $[\gamma]_{_k}$ is defined in an obvious sense. Hence, the notion of post-$k$-homotopy is well-defined for all such curves.

Recall that if $f$ is an entire function, for any two curves $\gamma, \beta \subset f(\C)\setminus P(f)$, homotopic relative to their endpoints, by the \textit{homotopy lifting property}, see \cite{hatcher2002algebraic}, for each curve in $f^{-1}(\gamma)$, there exists a curve in $f^{-1}(\beta)$ homotopic to it relative to their endpoints, since $f$ acts as a covering map. The following is an analogue of the Homotopy Lifting Property for post-$k$-homotopic curves.
\begin{prop}[Post-homotopy lifting property {\cite[Proposition 7.3]{mio_orbifolds}}]\label{cor_homot}
Let $f$ be an entire map and let $C\subset \C$ be a domain so that $f^{-1}(C) \subset C$ and $\AV(f) \cap C=\emptyset.$ Let $\gamma \subset C$ be a bounded curve such that $\#(\gamma \cap P(f))< \infty$. Fix any $k\geq 0$ and any curve $\gamma_k\subset f^{-k}(\gamma)$ for which the restriction $f^k\vert_{ \gamma_k}$ is injective. Then, for each $\beta \in [\gamma]_{_0}$, there exists a unique curve $\beta_k\subset f^{-k}(\beta)$ such that $\beta_k \in [\gamma_k]_{_k}$. In particular, $\beta_k$ and $\gamma_k$ share their endpoints.
\end{prop}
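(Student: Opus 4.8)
The plan is to reduce the statement to the concatenation pieces and then apply the classical homotopy lifting property piece by piece, being careful about the role of the marked points in $W(k)=\text{int}(\gamma)\cap f^{-k}(P(f))$. Fix $\gamma$, $k$, and the chosen lift $\gamma_k\subset f^{-k}(\gamma)$ with $f^k$ injective on $\gamma_k$. Write $W(0)=\{w_1,\dots,w_N\}=\text{int}(\gamma)\cap P(f)$ ordered along $\gamma$, and correspondingly $W(k)=\{v_1,\dots,v_N\}$ with $v_j=(f^k|_{\gamma_k})^{-1}(w_j)$; note $v_j\in f^{-k}(P(f))$ since $f^k(v_j)=w_j\in P(f)$, and there are no other points of $f^{-k}(P(f))$ on $\text{int}(\gamma_k)$ because $f^k|_{\gamma_k}$ is injective and meets $P(f)$ exactly in $W(0)$. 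Given $\beta\in[\gamma]_{_0}$, decompose $\beta=\beta^{w_1}_{w_0}\bm{\cdot}\cdots\bm{\cdot}\beta^{w_{N+1}}_{w_N}$ matching the decomposition of $\gamma$, where $w_0=p$, $w_{N+1}=q$; by definition of post-$0$-homotopy, $\beta^{w_{i+1}}_{w_i}$ is homotopic to $\gamma^{w_{i+1}}_{w_i}$ in $(\C\setminus P(f))\cup\{w_i,w_{i+1}\}$ for each $i$.

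The heart of the argument is to lift each piece. For a fixed $i$, consider the curve $\gamma^{w_{i+1}}_{w_i}$, whose interior avoids $P(f)$, hence lies in $f(\C)\setminus P(f)$ after discarding the (at most two) endpoints; the corresponding subarc $\gamma_k^{v_{i+1}}_{v_i}$ of $\gamma_k$ is a lift under $f^k$ on which $f^k$ is injective. I would like to invoke the classical homotopy lifting property for the map $f^k$ restricted over the domain $C$. The point of the hypotheses $f^{-1}(C)\subset C$ (hence $f^{-k}(C)\subset C$) and $\AV(f)\cap C=\emptyset$ is precisely that $f^k\colon f^{-k}(C)\to C$ is a covering map away from the (discrete) critical values, so the homotopy $H_i$ between $\gamma^{w_{i+1}}_{w_i}$ and $\beta^{w_{i+1}}_{w_i}$, which takes place in $(\C\setminus P(f))\cup\{w_i,w_{i+1}\}\subset C\setminus\CV(f^k)$ on the open part, lifts to a homotopy $\widetilde H_i$ starting from $\gamma_k^{v_{i+1}}_{v_i}$ and ending at a curve I call $\beta_k^{v_{i+1}}_{v_i}\subset f^{-k}(\beta^{w_{i+1}}_{w_i})$. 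The endpoints are fixed throughout the lifted homotopy, so $\beta_k^{v_{i+1}}_{v_i}$ runs from $v_i$ to $v_{i+1}$, and its interior avoids $f^{-k}(P(f))$ because $\beta^{w_{i+1}}_{w_i}$ has interior in $\C\setminus P(f)$. Concatenating, $\beta_k\defeq\beta_k^{v_1}_{v_0}\bm{\cdot}\cdots\bm{\cdot}\beta_k^{v_{N+1}}_{v_N}$ (with $v_0=p$, $v_{N+1}=q$) is a curve in $f^{-k}(\beta)$ lying in $\mathcal{H}^q_p(W(k))$, and by construction each piece is homotopic to the corresponding piece of $\gamma_k$ in $(\C\setminus f^{-k}(P(f)))\cup\{v_i,v_{i+1}\}$; that is, $\beta_k\in[\gamma_k]_{_k}$, and $\beta_k$ shares the endpoints $p,q$ with $\gamma_k$.

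For uniqueness, suppose $\beta_k'\subset f^{-k}(\beta)$ also lies in $[\gamma_k]_{_k}$. Then $\beta_k$ and $\beta_k'$ have the same marked-point decomposition $p,v_1,\dots,v_N,q$ (these are forced, since two curves in the same class $[\cdot]_{_k}$ meet $f^{-k}(P(f))$ in the same ordered set, and being post-$k$-homotopic to $\gamma_k$ pins that set down to $W(k)$), and for each $i$ the pieces $(\beta_k)^{v_{i+1}}_{v_i}$ and $(\beta_k')^{v_{i+1}}_{v_i}$ are both lifts of $\beta^{w_{i+1}}_{w_i}$ that are homotopic rel endpoints in $(\C\setminus f^{-k}(P(f)))\cup\{v_i,v_{i+1}\}$ to the same piece of $\gamma_k$; by uniqueness of path lifting for the covering $f^k$ over $C\setminus\CV(f^k)$, two such lifts sharing an endpoint and homotopic rel endpoints must coincide. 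Hence $\beta_k=\beta_k'$.

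The main obstacle I anticipate is not the topology of the lifting itself but making rigorous the claim that $f^k$ behaves like a covering over the relevant region: $f$ has critical points, so $f^k\colon f^{-k}(C)\to C$ is only a \emph{branched} cover. The fix is that the open parts of all curves and homotopies involved stay in $\C\setminus P(f)\supset\C\setminus\CV(f^k)$ (since $\CV(f^k)=\bigcup_{j=0}^{k-1}f^j(\CV(f))\subset P(f)$), where $f^k$ restricts to an honest covering map by the classical theorem — using here that $C$ has no asymptotic values of $f$, so no asymptotic values of $f^k$ either, and that $f^{-k}(C)\subset C$ keeps everything inside $C$. The endpoints $w_i, v_i$ may be critical values / critical points, but since the homotopies fix endpoints and we only need path-lifting with prescribed initial lift, the branching at the endpoints causes no trouble: each piece is handled on its open part and the endpoints are glued back by continuity. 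Once this "covering away from $P(f)$" picture is set up cleanly, the rest is a routine assembly of the classical homotopy lifting property applied $N+1$ times.
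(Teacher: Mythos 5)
The paper itself does not prove this proposition; it is imported from the companion paper \cite{mio_orbifolds}, so your argument has to be judged on its own merits. Your overall plan -- decompose at the marked points and lift piece by piece using covering space theory away from $P(f)$, noting $\CV(f^k)\subset P(f)$ (in fact $S(f^k)\subset P(f)$) -- is the natural one. But one correction first: the covering you need is $f^k\colon \C\setminus f^{-k}(P(f))\to \C\setminus P(f)$, not a restriction over $C$. The homotopies defining $[\gamma]_{_0}$ live in $(\C\setminus P(f))\cup\{w_i,w_{i+1}\}$ and have no reason to stay inside $C$, and your inclusion $(\C\setminus P(f))\cup\{w_i,w_{i+1}\}\subset C\setminus \CV(f^k)$ is simply false. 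This is repairable, but it signals that you have assigned the hypotheses on $C$ the wrong role: they are not what makes $f^k$ a covering (that holds over $\C\setminus P(f)$ unconditionally, since $S(f^k)\subset P(f)$); they are what controls the behaviour at the endpoints, which is where the actual content of the proposition sits.

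That endpoint analysis is the genuine gap, and you dismiss it with ``glued back by continuity''. Two distinct facts need proof. First, the lift of the open part of each piece of $\beta$ (and of each intermediate curve in the lifted homotopy) must \emph{converge} at its ends: a priori it could tend to infinity or fail to have a limit. Ruling this out uses that the cluster set is a connected subset of the discrete set $f^{-k}(w_{i+1})\cup\{\infty\}$, hence a point, and that $w_{i+1}\in C$ is not an asymptotic value of $f^k$ -- and your assertion ``no asymptotic values of $f^k$ either'' itself requires an argument: $\AV(f^k)\subset\bigcup_{j=0}^{k-1}f^j(\AV(f))$ together with the fact that $f^{-1}(C)\subset C$ makes $\C\setminus C$ forward invariant. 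Second, and more seriously, you must show the limit endpoints are exactly $v_i,v_{i+1}$. The classical ``terminal point of a lift is a homotopy invariant'' does not apply, because $H_i(1,s)=w_{i+1}\in P(f)$ lies outside the base of the covering (and $P(f)$ may even accumulate at $w_{i+1}$, so there need not be a punctured disc around it inside the base); one needs a local argument near the fibre over $w_{i+1}$, and this is plausibly where the unused hypothesis that $\gamma$ is bounded enters. The same issue breaks your uniqueness proof: unique path lifting determines a lift from its value at a point \emph{of the covering space}, but the shared endpoint $v_i$ lies in $f^{-k}(P(f))$ and may be a critical point of $f^k$, where $d\geq 2$ distinct lifts of the adjacent subarc of $\beta$ all emanate from the very same point. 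So ``sharing an endpoint'' does not pin down the lift; uniqueness must be extracted from the post-$k$-homotopy condition itself, which again requires the endpoint/local analysis that your proposal skips.
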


Finally, the next result tells us that if $f$ is an entire function that has dynamic rays, and $U$ is a certain subdomain of any hyperbolic orbifold, then there exists a constant $\mu$ such that for every piece of dynamic ray contained in $U$, we can find a curve in its post-$0$-homotopy class with orbifold length at most $\mu$. This result will be of great use to us for the following reason: in Lemma~\ref{lem_UHSC}, rather than pulling back pieces of dynamic rays that might not be rectifiable, we will instead pull-back curves in their same post-$0$-homotopy class, for which, by Theorem~\ref{cor_homot2}, we have a uniform bound on their length.

\begin{thm}[Pieces of rays with uniformly bounded length {\cite[Corollary 7.8]{mio_orbifolds}}]\label{cor_homot2}
Let $f\in \B$, let $\Or=(S, \nu)$ be a hyperbolic orbifold with $S\subset \C$, and let $U\Subset S$ be a simply connected domain with locally connected boundary. Assume that $P(f)\cap \overline{U} \subset J(f)$, $\#( P(f)\cap \overline{U})$ is finite and there exists a dynamic ray or ray tail landing at each point in $P(f)\cap U$. Then, there exists a constant $L_U \geq 0$, depending only on $U$, such that, for any (connected) piece of ray tail $\xi \subset U$, there exists $\delta\in [\xi]_{_0}$ with $\ell_{\Or}(\delta)\leq L_U.$
\end{thm}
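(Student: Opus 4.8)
The plan is to replace $\xi$ by a curve $\delta\in[\xi]_{_0}$ which is a radial spike near each postsingular point it is forced to pass through and lies on a fixed finite ``scaffolding'' in between; the bound $\ell_{\Or}(\delta)\le L_U$ then follows from the compactness of $\overline U$ together with the mildness of the orbifold metric there. Write $\{v_1,\dots,v_m\}\defeq P(f)\cap U$; by hypothesis this is a finite subset of $J(f)$, on which $f$ has bounded criticality. By the construction of $\Or$ (Section~\ref{sec_postsep}), the cone points of $\Or$ in $\overline U$ are among the finitely many points of $P(f)\cap\overline U\subset J(f)$ and all have finite order; hence $\rho_{\Or}$ is bounded on $\overline U$ outside any fixed neighbourhood of those points and has an integrable $|z-c|^{1/k_c-1}$-type singularity at each cone point $c$. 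In particular $\operatorname{diam}_{\Or}(\overline U)<\infty$, and a short radial spike terminating at any point of $\overline U$ has orbifold length at most a constant $\kappa_U$ depending only on $U$.

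First I would fix the scaffolding. By Carath\'eodory's theorem the Riemann map of $U$ extends continuously to $\overline\D$; in particular $U$ is simply connected, so $\pi_1(U\setminus\{v_1,\dots,v_m\})$ is free of rank $m$. For each $i$, the hypothesis provides a ray tail $\gamma_i$ landing at $v_i$; let $\widehat\gamma_i$ be the arc of $\gamma_i$ from $v_i$ to the first point at which $\gamma_i$ leaves $U$, so that $\widehat\gamma_i$ joins $v_i$ to $\partial U$ and the winding about $v_i$ of a curve contained in $U$ equals its intersection number with $\widehat\gamma_i$. Fix once and for all a base point $x_0\in U\setminus\{v_1,\dots,v_m\}$, an arc in $U\setminus\{v_1,\dots,v_m\}$ from $x_0$ to a point $p_i$ near each $v_i$, and a small loop about $v_i$ based at $p_i$; this finite configuration has total orbifold length some $\Sigma_U<\infty$ depending only on $U$ (a small loop about a cone point is orbifold-short, by integrability).

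The heart of the argument is to bound the post-$0$-homotopy type of $\xi$. Decompose $\xi=\xi^{w_1}_{a}\bm{\cdot}\xi^{w_2}_{w_1}\bm{\cdot}\cdots\bm{\cdot}\xi^{b}_{w_N}$ along $W\defeq\operatorname{int}(\xi)\cap P(f)=\{w_1,\dots,w_N\}\subseteq\{v_1,\dots,v_m\}$ (so $N\le m$, as $\xi$ is injective), with endpoints $a,b\in\overline U$. Since $\xi$ is a connected sub-arc of a single ray tail $\gamma$, and two ray tails meet in a connected set --- here the landing rays enter the proof, and the statement rests on the arclike, tree-at-critical-points structure of rays recalled in Sections~\ref{sec_symbolic}--\ref{sec_CB} --- the set $\xi\cap\widehat\gamma_i$ is connected for every $i$; hence each segment $\xi^{w_{j+1}}_{w_j}$ crosses the slit $\widehat\gamma_i$ at most a universal number $c_0$ of times and so winds about $v_i$ at most $c_0$ times in absolute value. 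Therefore $[\xi^{w_{j+1}}_{w_j}]_{_0}$ lies in a \emph{finite} set of homotopy types, each realizable with a bounded number (depending only on $U$) of traversals of the scaffolding, its endpoints in the compact set $\overline U$. I expect this topological step --- excluding that a piece of a ray tail spirals uncontrollably about the postsingular points inside $U$ --- to be the main obstacle: it is exactly what the hypothesis ``a ray lands at each point of $P(f)\cap U$'' is for, and it genuinely uses the fine structure of rays, since embeddedness by itself does not bound winding.

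Finally I would assemble $\delta$ segment by segment. For $\delta^{w_{j+1}}_{w_j}$: leave $w_j$ by a radial spike to $p_i$ if $w_j=v_i$, and otherwise connect the endpoint $a$ into the scaffolding by a bounded hop (walk away from the finitely many $v_l$, then take an orbifold geodesic to $x_0$, then correct the class using the scaffolding loops --- the required correction has bounded complexity by the crux); then traverse the scaffolding inside $U\setminus\{v_1,\dots,v_m\}$ realizing the windings of $\xi^{w_{j+1}}_{w_j}$ about each $v_i$; then reach $w_{j+1}$ by the symmetric move. By construction $\delta\defeq\delta^{w_1}_{a}\bm{\cdot}\cdots\bm{\cdot}\delta^{b}_{w_N}$ lies in $[\xi]_{_0}$, is contained in $\overline U$, and $\ell_{\Or}(\delta^{w_{j+1}}_{w_j})\le 2\kappa_U+2\operatorname{diam}_{\Or}(\overline U)+(c_0 m+c_1)\,\Sigma_U$ for a constant $c_1=c_1(U)$; summing over the $N\le m$ segments yields $\ell_{\Or}(\delta)\le L_U$ with $L_U$ depending only on $U$ (and the fixed $f$ and $\Or$), as required. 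The cases $N=0$ and $\xi$ a single point are immediate.
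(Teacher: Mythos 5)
Note first that this paper does not actually prove the statement you were asked about: Theorem~\ref{cor_homot2} is imported verbatim from \cite[Corollary 7.9]{mio_orbifolds}, so there is no in-paper argument to compare yours against, and your attempt has to stand on its own. It does not, because its central step is asserted rather than proved. Everything in your construction hinges on the claim that $\xi\cap\widehat\gamma_i$ is connected ``since two ray tails meet in a connected set'', from which you deduce that $\xi$ crosses each slit $\widehat\gamma_i$ a bounded number of times and hence has bounded post-$0$-homotopy type. No such connectedness statement appears in Sections~\ref{sec_symbolic}--\ref{sec_CB} (or anywhere in this paper), and in the very setting at hand it is a delicate matter: ray tails pass through critical points and their preimages, split there, and overlap partially (this is the whole point of the signed-address formalism), so the intersection pattern of two ray tails is precisely the kind of fine structure one must work to control. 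Bounding how much a piece of a ray tail can wind about the postsingular points inside $U$ \emph{is} the content of the theorem; you flag it as ``the main obstacle'' and then invoke it twice more (``the required correction has bounded complexity by the crux''), which makes the argument circular at its core. The scaffolding, radial spikes, integrability of $\rho_{\Or}$ at finite-order cone points, and $\operatorname{diam}_{\Or}(\overline U)<\infty$ are all routine; the missing ingredient is exactly the unproved crossing bound.

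Even granting that crux, several steps need repair. Bounded winding numbers about each $v_i$ do not imply that $[\xi^{w_{j+1}}_{w_j}]_{_0}$ lies in a finite set of classes when $m\geq 2$, since $\pi_1(U\setminus\{v_1,\dots,v_m\})$ is free non-abelian and winding numbers only see its abelianization; what your scheme really needs is a bound on the cut sequence of $\xi$ with respect to the slits, and for that the arcs $\widehat\gamma_1,\dots,\widehat\gamma_m$ should be pairwise disjoint (or their mutual intersections controlled) and should genuinely cut $U$ into simply connected pieces --- but the landing ray tails at different $v_i$ may themselves overlap and may re-enter $U$ after leaving it, so the ``dual slit'' picture is not automatic. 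Also, post-$0$-homotopies must avoid all of $P(f)$, not merely $P(f)\cap U$, so the connecting ``hops'' from the endpoints $a,b$ of $\xi$ into the scaffolding, and the correcting loops, must be carried out inside $U\setminus P(f)$ and in the correct relative class --- again something you can only do once the crossing bound is available. Finally, in this statement $\Or$ is an arbitrary hyperbolic orbifold with $U\Subset S$, so its cone points are not ``by construction'' among $P(f)\cap\overline U$; the facts you actually use (finite orbifold diameter of $\overline U$, short spikes and loops) should be justified from $\overline U\Subset S$ and finiteness of the ramification orders, not from the orbifolds of Section~\ref{sec_postsep}.
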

\section{The model space}\label{sec_model}

Recall that our goal is to define a model for the action of $f$ on $J(f)$, Theorem~\ref{thm_main_intro}. The potential presence of critical values, and hence critical points, in $I(f)$, suggests the use of two copies of $J(g)$, say $J(g)\times \{-,+\}$, as a candidate model space: then, in a very rough sense, a function $\phi$ could map $J(g)\times \{-,+\}$ to $J(f)$ by mapping $J(g)\times \{+\}$ to the closure of those canonical rays of the form $\Gamma(\cdot, +)$, and $J(g)\times \{ -\}$ to the closure  of those canonical rays of the form $\Gamma(\cdot, -)$. We will proceed this way in \S\ref{sec_semiconj}.

Note that for the function $\phi$ to be continuous, we need to provide the set $J(g)\times \{-,+ \}$ with a topology that is compatible with that of $\Addr(f)_\pm$ defined in \ref{discussion_signed_addr}. This is our main task in this section. Even if a topology could be defined directly over $J(g)\times \{-,+ \}$, for convenience and simplification of arguments, we instead define it in two copies of any straight brush $B$, and using the corresponding ambient homeomorphism $\psi\colon J(g) \rightarrow B$, we induce a topology in our model set in \ref{dis_topology_model}.

For the rest of the section, let us fix any $f\in \CB$ and a disjoint type function $g$ from its parameter space. Let us moreover assume that the topological space $\Addr(g)_\pm$ has been defined following Definition \ref{defn_signedaddr}. Recall from Definition \ref{def_brush} that a straight brush is defined as a subset of $[0,\infty)\times \R \setminus \Q $. Hence, we consider the set
\begin{equation}\label{eq_setM}
\M\defeq [0,\infty)\times \R \setminus \Q \times \lbrace -,+\rbrace,
\end{equation}
that we aim to endow with a topology. We will use the symbols ``$\ast, \star, \circledast$'' to refer to generic elements of $\lbrace -,+\rbrace$. 

\begin{discussion}[Topology in $(\R \setminus \Q) \times \lbrace -,+\rbrace$]\label{dis_modelM_topIrr}
We start by providing $(\R \setminus \Q) \times \lbrace -,+\rbrace$ with a topology compatible with that of $\Addr(g)_\pm$. Let $<_{_i}$ be the usual linear order on irrationals, and let us give the set $\lbrace -,+\rbrace$ the order $\lbrace -\rbrace \prec \lbrace +\rbrace$. Then, for elements in the set $(\R \setminus \Q) \times \lbrace -,+\rbrace$, we define the order relation 
\begin{equation}\label{eq_orderB}
(r, \ast )< (s, \star) \qquad \text{ if and only if } \qquad r <_{_i} s \quad \text{ or } \quad r = s \: \text{ and } \: \ast \prec \star.
\end{equation} 
This gives a total order to $(\R \setminus \Q) \times \lbrace -,+\rbrace$. Thus, we can define a cyclic order induced by ``$<$'' in the usual way: for $a,x,b \in (\R \setminus \Q) \times \lbrace -,+\rbrace$,
$$[a,x,b]_{_I} \quad \text{if and only if} \quad a< x< b \quad \text{ or } \quad x < b < a \quad \text{ or }\quad b < a < x.$$
Moreover, given two different elements $a,b \in (\R \setminus \Q) \times \lbrace -,+\rbrace$, we define the \textit{open interval} from $a$ to $b$, denoted by $(a,b)$, as the set of all points $x\in (\R \setminus \Q)\times \lbrace -,+\rbrace$ such that $[a,x,b]$. The collection of all such open intervals forms a base for the \textit{cyclic order topology}, that we denote by $\tau_I$.
\end{discussion}

Before we proceed to define a topology in $\mathcal{M}$, let us check that the topological spaces $(\Addr(g)_\pm,\tau_A)$ and $(\R \setminus \Q), \tau_I)$ are indeed closely related.
\begin{prop}[Correspondence between spaces]\label{prop_mapC} Let $\psi\colon J(g)\rightarrow B$ be an ambient homeomorphism and for each $\s \in \Addr(g)$, let $\Irr(\s)\defeq y,$ where $y$ is the irrational so that $\psi(J_\s)=[t_y, \infty) \times \{y\}\subset B$. Let $\mathcal{C}\colon\Addr(g)_\pm \rightarrow (\R \setminus \Q)\times \{-,+ \}$ given by $\mathcal{C}(\ul{s}, \ast)=(\Irr(\s),\ast)$. Then $\mathcal{C}$ is an open map.
\end{prop}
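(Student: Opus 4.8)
The plan is to show that $\mathcal{C}$ maps each basic open set of $(\Addr(g)_\pm,\tau_A)$ — that is, an open interval of signed addresses — onto a set that is open in $\tau_I$. Since $\mathcal{C}$ is defined coordinatewise via the map $\Irr$ on external addresses and the identity on $\{-,+\}$, the core of the argument is an order-theoretic one: I need to check that $\mathcal{C}$ respects the cyclic orders $[\cdot,\cdot,\cdot]_A$ and $[\cdot,\cdot,\cdot]_I$, in the sense that it carries open intervals to open intervals (or at least to unions of such). The key input is Theorem \ref{thm_CB}\ref{item:addr}, which gives that $\theta$ — and hence, after composing with the ambient homeomorphism $\psi$ that realises $J(g)$ as the straight brush $B$ — establishes an \emph{order-preserving} bijection between external addresses of $g$ and the vertical lines of $B$, indexed by irrationals. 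In other words, $\Irr\colon \Addr(g)\to \{y : (x,y)\in B \text{ for some } x\}$ is an order isomorphism onto its image, for the lexicographic order $<_\ell$ on $\Addr(g)$ and the usual order $<_i$ on the irrationals.

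Granting that, the proof proceeds as follows. First I would record that $\Irr$ being an order isomorphism onto its image, together with the fact that the sign coordinate is untouched, implies that the induced map $\mathcal{C}\colon (\Addr(g)_\pm, <_A)\to ((\R\setminus\Q)\times\{-,+\}, <)$ is strictly order-preserving: comparing \eqref{eq_linear_addr} with \eqref{eq_orderB}, one has $(\s,\ast)<_A(\ultau,\star)$ iff $\mathcal{C}(\s,\ast)<\mathcal{C}(\ultau,\star)$, provided both sides lie in the image. Consequently $\mathcal{C}$ preserves the derived cyclic orders: $[a,x,b]_A$ iff $[\mathcal{C}(a),\mathcal{C}(x),\mathcal{C}(b)]_I$ for $a,x,b\in\Addr(g)_\pm$. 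Second, take a basic open set $U = \big((\s,\ast),(\ultau,\star)\big)$ of $\tau_A$. Its image under $\mathcal{C}$ consists of all $\mathcal{C}(\ul\alpha,\cdot)$ with $[(\s,\ast),(\ul\alpha,\cdot),(\ultau,\star)]_A$, which by the cyclic-order compatibility equals $\mathcal{C}(\Addr(g)_\pm)\cap \big(\mathcal{C}(\s,\ast),\mathcal{C}(\ultau,\star)\big)$, i.e. the intersection of the image of $\mathcal{C}$ with a $\tau_I$-open interval. Third, I must upgrade this to genuine openness in $(\R\setminus\Q)\times\{-,+\}$, not merely relative openness in the image. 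This is where density enters: by the straight brush axiom (Definition \ref{def_brush}), the set of irrationals $y$ that actually occur as hairs of $B$ is dense in $\R\setminus\Q$, and moreover each such $y$ is approximated from both sides by other occurring irrationals. Hence $\mathcal{C}(\Addr(g)_\pm)$ is dense in $(\R\setminus\Q)\times\{-,+\}$ and has no isolated ``gaps'' that would obstruct the relative interval from being a full interval; a short argument shows that the image of an open interval of addresses, being of the form (dense set) $\cap$ (open interval), is in fact obtained by intersecting a possibly larger open interval with the image, so its image is open. (Concretely: given a point $\mathcal{C}(\ul\alpha,\cdot)$ in the image lying strictly inside the target interval, density from both sides produces endpoints witnessing a $\tau_I$-open interval around it contained in $\mathcal{C}(U)$.)

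The main obstacle I anticipate is precisely this last step — verifying that $\mathcal{C}$ is an \emph{open} map rather than merely continuous or relatively open. Order-preserving maps between linearly ordered sets are automatically continuous for the order topologies, but openness can fail when the image is not an interval, and here the image of $\Addr(g)$ inside $\R\setminus\Q$ is typically a proper (though dense) subset. The resolution must exploit the specific structure guaranteed by Definition \ref{def_brush}: the density of the hair-set and, crucially, the two-sided approximation property (every occurring $y$ is a two-sided limit of occurring $\beta_n,\gamma_n$). This ensures that whenever $(\s,\ast)$ is an endpoint of an interval $U$, one can find addresses in $\Addr(g)_\pm$ whose $\mathcal{C}$-images lie arbitrarily close to $\mathcal{C}(\s,\ast)$ on the correct side, so that the image $\mathcal{C}(U)$ does not merely touch but genuinely surrounds each of its points with a $\tau_I$-interval. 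A secondary, more bookkeeping-type subtlety is the handling of the two sign values and of the cyclic (as opposed to linear) nature of both orders — one has to be a little careful when an open interval ``wraps around'', but this is routine once the linear-order comparison \eqref{eq_orderB} vs.\ \eqref{eq_linear_addr} has been matched up.
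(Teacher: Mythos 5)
Your first two steps essentially reproduce the paper's argument: the paper establishes the chain of equivalences $[\s, \ultau, \ul{\alpha}]_{_\ell} \Leftrightarrow [J_\s, J_{\ultau}, J_{\ul{\alpha}}]_\infty \Leftrightarrow [\psi(J_\s), \psi(J_{\ultau}), \psi(J_{\ul{\alpha}})]_\infty \Leftrightarrow [\Irr(\s), \Irr(\ultau), \Irr(\ul{\alpha})]_{_i}$ and then, matching \eqref{eq_linear_addr} with \eqref{eq_orderB}, deduces the compatibility \eqref{eq_mapC} of the signed cyclic orders, which is exactly your order-isomorphism step followed by the computation of the image of an interval. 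One correction of sourcing, though: the order-preservation you invoke is not Theorem \ref{thm_CB}\ref{item:addr} --- that item compares external addresses of $g$ with those of $f$ and plays no role here, and composing $\theta$ with $\psi$ does not produce a map to $B$. What is actually needed is that the lexicographic order on $\Addr(g)$ coincides with the cyclic order at infinity of the hairs $J_\s$ (this is \cite[Observation 2.14]{mio_signed_addr}), combined with the fact that the \emph{ambient} homeomorphism $\psi$ preserves the cyclic order at infinity of unbounded sets and hence carries it to the usual order of the heights $y$.

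The genuine gap is your third step. Density of the set of hair heights cannot upgrade relative openness to openness in $\big((\R\setminus\Q)\times\{-,+\},\tau_I\big)$: any nonempty $\tau_I$-open interval around a point $(\Irr(\ul{\alpha}),\ast)$ contains \emph{all} signed pairs $(\beta,\pm)$ for $\beta$ ranging over a whole interval of irrationals, and Definition \ref{def_brush} only guarantees that the heights are dense, not that they fill out any such interval; for the brushes arising here the complement of the height set is typically dense as well. Any $(\beta,\pm)$ with $\beta$ not a hair height lies outside the image of $\mathcal{C}$, hence outside $\mathcal{C}(U)$, so $\mathcal{C}(U)$ contains no nonempty $\tau_I$-open set and your parenthetical claim that two-sided approximation ``produces a $\tau_I$-open interval contained in $\mathcal{C}(U)$'' is false --- the brush axioms give image points arbitrarily close on both sides, not intervals of image points. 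What your step two actually proves (and what the paper's proof, which stops at \eqref{eq_mapC}, actually delivers) is that $\mathcal{C}$ maps open intervals of $\Addr(g)_\pm$ onto traces on its image of open intervals of $\tau_I$, i.e.\ $\mathcal{C}$ is open \emph{onto its image}; and this is all that is used downstream, since in the proof of Proposition \ref{new_prop_contphi_k} the resulting set is immediately intersected with $B_\pm$, whose points all have hair heights. So the correct move is to stop after your step two and record the conclusion in that relative form, rather than attempt the (unachievable) upgrade via density.
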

\begin{proof} Let $\ul{s}, \ul{\tau}, \ul{\alpha} \in \Addr(g)$. Let $[\cdot]_\ell$ denote the lexicographic cyclic order on $\Addr(g)$ as described in \ref{discussion_signed_addr}. Then,
$$[\ul{s}, \ul{\tau}, \ul{\alpha}]_{_\ell} \xLeftrightarrow{ \; \; _{(1)} \; \;} [ J_\s, J_{\ul{\tau}}, J_{\ul{\alpha}}]_\infty \xLeftrightarrow{ \; \; _{(2)} \; \;} [
\psi (J_\s),\psi (J_{\ul{\tau}}), \psi (J_{\ul{\alpha}})]_\infty \xLeftrightarrow{ \; \; _{(3)} \; \;}[\Irr(\ul{s}),\Irr( \ul{\tau}),\Irr( \ul{\alpha})]_{_i},$$ 
where (1) is \cite[Observation 2.14]{mio_signed_addr}, ${(2)}$ is by $\psi$ being a homeomorphism and hence preserving the cyclic order at infinity, and ${(3)}$ is by defining a cyclic order in the irrationals from the usual linear order. Then, if we respectively cut the cyclic orders $[\cdot]_\ell$ and $[\cdot]_i$ in some external address $\s$ and $\Irr(\s)$, since the linear orders in $\Addr(g)_\pm$ and $(\R \setminus \Q)\times \{-,+ \}$ are respectively defined in \eqref{eq_linear_addr} and \eqref{eq_orderB} the same way, it follows that
\begin{equation}\label{eq_mapC}
[(\s, \ast),(\underline{\alpha}, \star),(\underline{\tau},\circledast)]_{_A} \quad \text{if and only if} \quad [\mathcal{C}((\s, \ast)),\mathcal{C}((\underline{\alpha}, \star)),\mathcal{C}((\underline{\tau},\circledast))]_{_I}.
\end{equation}
Then, since we have used these orders to define the respective cyclic order topologies $\tau_A$ and $\tau_I$ in the respective domain and codomain of $\mathcal{C}$, we have that $\mathcal{C}$ is an open map.
\end{proof}

We observe some properties of the topological space defined in \ref{dis_modelM_topIrr}.
\begin{observation}[Open and closed sets in $(\R \setminus \Q \times \{-,+\}, \tau_I)$]\label{rem_cyclicIrr}
Let $A$ be an open set of $(\R \setminus \Q \times \{-,+\}, \tau_I)$ and suppose that $(s,-), (s,+)\in A$. Then, since $ \tau_I$ is generated by open intervals, there exist irrationals $r<_{_i}s<_{_i}t$ such that $((r,\ast), (s,+))\ni (s,-)$ and $((s,-),(t,\ast)) \ni (s,+)$. Hence, both $(s,-), (s,+) \in((r,\ast), (t,\ast)) \subset A$. Moreover, for any pair $r<_{_i} s$, the sets $U\defeq ((r, +), (s,-))$, $U \cup \{(r,+)\}$, $U \cup \{(s,-)\}$ and $U \cup \{(r,+),(s,-)\}=((r, -), (s, +)) \eqdef V$ are open intervals. In addition, $V$ is also closed, since it contains its boundary points.
\end{observation}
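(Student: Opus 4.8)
The plan is to argue entirely inside the linear order $<$ on $X\defeq (\R\setminus\Q)\times\{-,+\}$ from \eqref{eq_orderB}, after recording four elementary structural facts. First, since $\R\setminus\Q$ has neither a least nor a greatest element, $X$ has no endpoints, so the \emph{genuine} open intervals $\{x\in X:a<x<b\}$ (with $a,b\in X$) already form a base for $\tau_I$: a ``wrap-around'' cyclic interval containing a point $p$ can always be shrunk, using the absence of endpoints, to a genuine one that still contains $p$ and lies inside it. Second, $(s,+)$ is the \emph{immediate successor} of $(s,-)$, since \eqref{eq_orderB} leaves no element strictly between them. Third, by density of the irrationals $(s,-)$ has no immediate predecessor, $(s,+)$ has no immediate successor, and every $(r,-)$ has immediate successor $(r,+)$; equivalently, $\{x:x<(s,+)\}=\{x:x\le(s,-)\}$, $\{x:x>(s,-)\}=\{x:x\ge(s,+)\}$, and $\{x:x>(r,-)\}=\{x:x\ge(r,+)\}$. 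Fourth, by \eqref{eq_orderB} a genuine basic interval containing $(s,-)$ has left endpoint $(r,\ast)$ with $r<_{_i}s$ and right endpoint either $(t,\ast')$ with $t>_{_i}s$ or exactly $(s,+)$; and symmetrically for one containing $(s,+)$.

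For the first assertion, take $A\in\tau_I$ with $(s,-),(s,+)\in A$, and pick genuine basic intervals $I_1\ni(s,-)$ and $I_2\ni(s,+)$ with $I_1,I_2\subseteq A$. If $I_1$ also contains $(s,+)$, or $I_2$ also contains $(s,-)$, then by the fourth fact that interval has the form $((r,\ast),(t,\ast))$ with $r<_{_i}s<_{_i}t$, and we are done. Otherwise the fourth fact forces $I_1=((r,\ast_1),(s,+))$ and $I_2=((s,-),(t,\ast_2))$ with $r<_{_i}s<_{_i}t$; since nothing lies strictly between $(s,-)$ and $(s,+)$, the union $I_1\cup I_2=\{x:(r,\ast_1)<x\le(s,-)\}\cup\{x:(s,+)\le x<(t,\ast_2)\}$ equals $((r,\ast_1),(t,\ast_2))\subseteq A$, again of the required form and containing both points. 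In every case $r<_{_i}s<_{_i}t$, whence $(s,-)\in((r,\ast),(s,+))$ and $(s,+)\in((s,-),(t,\ast))$, which is the claim.

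For the remaining assertions, fix $r<_{_i}s$, so $U=((r,+),(s,-))$ is open straight from the definition of $\tau_I$. Using the identities in the third fact one reads off directly that $U\cup\{(r,+)\}=\{x:(r,-)<x<(s,-)\}=((r,-),(s,-))$, that $U\cup\{(s,-)\}=\{x:(r,+)<x<(s,+)\}=((r,+),(s,+))$, and that $V=U\cup\{(r,+),(s,-)\}=\{x:(r,-)<x<(s,+)\}=((r,-),(s,+))$; so all three are genuine basic $\tau_I$-open intervals. Finally, $X\setminus V=\{x:x\le(r,-)\}\cup\{x:x\ge(s,+)\}=\{x:x<(r,+)\}\cup\{x:x>(s,-)\}$ is a union of two open rays, each $\tau_I$-open (a union of basic intervals with one fixed endpoint), so $V$ is closed; equivalently, as $U\neq\emptyset$ for $r<_{_i}s$, $V=\overline U$ is the open interval $U$ together with its two boundary points $(r,+),(s,-)$, both of which lie in $V$.

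The only step requiring any care is the ``seam'' case in the first assertion, where $I_1$ and $I_2$ meet exactly along the consecutive pair $(s,-),(s,+)$; the point there is precisely that this pair is a jump of $(X,<)$, so the union of the two half-open pieces closes up to a genuine open interval. Everything else is a routine translation of the immediate-successor relations in $X$, so I expect no genuine obstacle.
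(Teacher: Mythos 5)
Your proof is correct and takes essentially the same approach as the paper, which records this as an Observation whose justification is only the embedded one (basic open intervals of the cyclic order topology, plus the jump between $(s,-)$ and $(s,+)$ and the fact that $V$ contains its boundary). Your explicit handling of the ``seam'' case where the two basic intervals abut exactly at the consecutive pair $(s,-),(s,+)$, and of closedness of $V$ via the openness of its complement, simply fills in the details the paper leaves implicit.
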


\begin{discussion}[Definition of topologies]\label{dis_topology_model}
Let $\mathcal{M}$ be the set from \eqref{eq_setM}. We define the topological space $(\M, \tau_\M)$, with $\tau_\M$ being the product topology of $[0,\infty)$ with the usual topology, and $(\R \setminus \Q)\times \lbrace -,+\rbrace$ with the topology $\tau_I$. Let $B$ and $\psi$ be a straight brush and usual ambient homeomorphism for which $\psi(J(g))=B$. Let $B_{\pm}\defeq B \times \lbrace -,+\rbrace$ be the subspace of $\M$ with the induced topology $\tau_{B_{\pm}}$ from $\tau_\M$. Consider the set $J(g)_{\pm}\defeq J(g)\times \lbrace -,+\rbrace$ and the bijection $\tilde{\psi}\colon J(g)_{\pm} \to B_\pm$ defined as $\tilde{\psi}((z,\ast))\defeq (\psi(z),\ast)$. We can then induce a topology in $J(g)_{\pm}$ from the space $(B_\pm,\tau_{B_{\pm}})$, namely 
\begin{equation}\label{eq_topologyJ}
\tau_J\defeq \lbrace \tilde{\psi}^{-1}(U) \colon U\in \tau_{B_\pm} \rbrace.
\end{equation}
Note that in particular, $\tilde{\psi}\colon (J(g)_{\pm}, \tau_J) \to (B_\pm,\tau_{B_{\pm}})$ is a homeomorphism. We moreover define $I(g)_{\pm}\defeq I(g)\times \lbrace -,+\rbrace \subset J(g)_{\pm}$ as a subspace equipped with the induced topology. 
\end{discussion}

\begin{defn}[Model for functions in $\CB$]\label{defn_modelspace}
Let $f\in \CB$ and let $g$ be any disjoint type function on its parameter space. Then, the space $(J(g)_\pm, \tau_J)$, with $\tau_J$ defined following \ref{dis_topology_model}, is a \textit{model space} for $f$. Moreover, we define its \textit{associated model function} $\tilde{g} \colon J(g)_\pm\rightarrow J(g)_\pm$ as $\tilde{g}(z, \ast)\defeq(g(z),\ast)$. 
\end{defn}	
	
\begin{observation}[All models for a fixed function are conjugate]\label{obs_unique_model} Let $f\in \CB$ and let $g_1$ and $g_2$ be two disjoint type functions on its parameter space. Let $J(g_1)_\pm$, $J(g_2)_\pm$ and $\tilde{g}_1$, $\tilde{g}_2$ be the corresponding models and respective associated model functions. Then, there exists a homeomorphism $\Phi\colon J(g_1)_\pm \rightarrow J(g_2)_\pm$ such that $\Phi\circ \tilde{g}_1=\tilde{g}_2 \circ \Phi$. To see this, note that since any two straight brushes are ambiently homeomorphic, we may assume without loss of generality that the topologies in $J(g_1)_\pm$ and $J(g_2)_\pm$ have been induced from the same space $B_\pm$ following \ref{dis_topology_model}. It follows from \cite[Proof of Theorem 3.1]{lasseRigidity}, see \cite[Corollary~2]{mio_newCB}, that there exists a homeomorphism $\phi\colon J(g_1)\rightarrow J(g_2)$ such that $\phi\circ g_1=g_2 \circ \phi$. Defining $\Phi(z, \ast)\defeq(\phi(z),\ast)$, our claim follows.
\end{observation}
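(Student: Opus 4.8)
The plan is to build $\Phi$ on the first coordinate only, reducing everything to the known rigidity of disjoint type maps. Concretely, I would look for a homeomorphism $\phi\colon J(g_1)\to J(g_2)$ with $\phi\circ g_1=g_2\circ\phi$ and then set $\Phi(z,\ast)\defeq(\phi(z),\ast)$. With this choice the conjugacy identity is immediate, since $\Phi(\tilde{g}_1(z,\ast))=\Phi(g_1(z),\ast)=(\phi(g_1(z)),\ast)=(g_2(\phi(z)),\ast)=\tilde{g}_2(\phi(z),\ast)=\tilde{g}_2(\Phi(z,\ast))$. So the genuine content splits into two parts: (i) producing such a $\phi$, and (ii) checking that the induced $\Phi$ is a homeomorphism for the topologies $\tau_J$ introduced in \ref{dis_topology_model}.

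For part (i), I would first normalise the ambient data. The topology $\tau_J$ on $J(g_i)_\pm$ is, by construction, pulled back via the bijection $(z,\ast)\mapsto(\psi_i(z),\ast)$ from $B_i\times\{-,+\}$, where $\psi_i\colon J(g_i)\to B_i$ is an ambient homeomorphism onto a straight brush $B_i$. Since any two straight brushes are ambiently homeomorphic (Definition \ref{def_brush}; compare \cite{AartsOversteegen}), post-composing $\psi_1$ with such a homeomorphism changes neither the statement nor the model function $\tilde{g}_1$, so I may assume $B_1=B_2$ and call this common brush $B$. Now $g_1$ and $g_2$ are quasiconformally equivalent, both lying in the parameter space of $f$, so Rempe's rigidity theorem for disjoint type maps applies: by \cite[Proof of Theorem 3.1]{lasseRidigity}, in the form recorded in \cite[Corollary 4.3]{mio_newCB}, there is a homeomorphism $\phi\colon J(g_1)\to J(g_2)$ with $\phi\circ g_1=g_2\circ\phi$ that moreover respects external addresses, consistently with the order-preserving address correspondence of Theorem \ref{thm_CB}\ref{item:addr}. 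Consequently $\Psi\defeq\psi_2\circ\phi\circ\psi_1^{-1}$ is a homeomorphism of $B$ carrying hairs to hairs and preserving their cyclic order at infinity, i.e. the order of the irrational parameters indexing them.

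For part (ii), I would simply unwind the definitions. By \ref{dis_modelM_topIrr}--\ref{dis_topology_model}, the topology on $B\times\{-,+\}$ is the subspace topology from $\M$, whose $(\R\setminus\Q)\times\{-,+\}$-factor carries the order topology $\tau_I$ of the doubled order ($(r,-)\prec(r,+)$, and $(r,\ast)<(s,\star)$ whenever $r<_{_i}s$). Since $\Psi$ carries hairs to hairs and preserves the order of their parameters, the map $(b,\ast)\mapsto(\Psi(b),\ast)$ preserves exactly the order that defines $\tau_I$ and is continuous in the $[0,\infty)$-coordinate, hence is a homeomorphism of $B\times\{-,+\}$; transporting this back through the bijections $(z,\ast)\mapsto(\psi_i(z),\ast)$ shows that $\Phi$ is a homeomorphism $J(g_1)_\pm\to J(g_2)_\pm$. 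Together with the intertwining identity above, this finishes the proof.

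I do not anticipate a serious obstacle here: the only non-formal ingredient is the existence of an address-preserving Julia-set conjugacy between disjoint type maps in a common parameter space, and this is available off the shelf. The one point that deserves a careful line is that ``preserves the cyclic order at infinity'' is precisely the property that makes $\Phi$ continuous for the somewhat unusual order topology $\tau_I$ on the doubled irrationals; but since $\tau_I$, and hence $\tau_J$, was engineered in \ref{dis_modelM_topIrr}--\ref{dis_topology_model} exactly so as to be governed by that cyclic order, this verification is routine once written out.
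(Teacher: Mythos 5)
Your proposal is correct and follows essentially the same route as the paper: normalise both models to a common brush $B_\pm$, invoke the conjugacy $\phi\colon J(g_1)\to J(g_2)$ from \cite[Proof of Theorem 3.1]{lasseRidigity} (as recorded in \cite[Corollary 4.3]{mio_newCB}), and double it via $\Phi(z,\ast)=(\phi(z),\ast)$. The only difference is that you spell out the verification that the doubled map is a homeomorphism for $\tau_J$ --- namely that $\phi$ preserves hairs and their cyclic order at infinity --- a point the paper leaves implicit in ``our claim follows''.
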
	
		
\begin{remark}
The space $(\M,\tau_\M)$ is not second countable, see \cite[remark on p.124]{mio_thesis}, and so it cannot be (topologically) embedded on the plane. Similarly, nor can $(J(g)_{\pm}, \tau_J)$. Nonetheless, consider any open set $U$ of $\M$ of the form $U \defeq (t_1, t_2) \times (x,y)$, with $t_1<t_2$ and $x=(r,\ast) ,\: y=(s,\star) \in (\R \setminus \Q) \times \lbrace -,+\rbrace$ for some $r<_{_i}s$. Then, the interval $(x,y)$ comprises all elements $(\alpha, \ast)$ with $r<_{_i}\alpha<_{_i}s$, and so, we can think of $U$ as being a sort of ``box''. This intuition will become clearer in the proof of the next proposition.
\end{remark}

\begin{prop}[Continuity of functions from the model space]\label{prop_contmodel} Let $f\in \CB$ and let $J(g)_\pm$ be a model space for $f$. Then, both its associated model function $\tilde{g}$ and the function $\pi\colon J(g)_{\pm} \rightarrow J(g)$ given by $\pi(z, \ast)\defeq z$ are continuous.
\end{prop}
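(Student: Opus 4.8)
The plan is to reduce both continuity statements to the topology on $B_\pm$ via the homeomorphism $\tilde\psi$ from \ref{dis_topology_model}, and then to the cyclic-order topology $\tau_I$ on $(\R\setminus\Q)\times\{-,+\}$ via the projection onto the second and third coordinates. Since $\tilde\psi\colon(J(g)_\pm,\tau_J)\to(B_\pm,\tau_{B_\pm})$ is a homeomorphism by construction, it suffices to check continuity of the conjugated maps $\tilde\psi\circ\tilde g\circ\tilde\psi^{-1}$ and $\psi\circ\pi\circ\tilde\psi^{-1}$ on $B_\pm$, where $\psi\colon J(g)\to B$ is the ambient homeomorphism. Both conjugated maps have an explicit description: the first acts on $B_\pm\subset[0,\infty)\times(\R\setminus\Q)\times\{-,+\}$ by applying (the $B$-side conjugate of) $g$ to the first two coordinates and the identity on the sign, and the second is simply the coordinate projection $(x,y,\ast)\mapsto(x,y)$ onto $B$.

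\medskip

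\textbf{Continuity of $\pi$.} For $\pi$, I would argue directly at the level of $\M$. The space $\M$ carries the product topology of $[0,\infty)$ (usual) with $((\R\setminus\Q)\times\{-,+\},\tau_I)$. There is a natural "forgetful" map $\Pi\colon\M\to[0,\infty)\times(\R\setminus\Q)$ that drops the sign; I claim $\Pi$ is continuous. Indeed, the preimage under the sign-forgetting map $(\R\setminus\Q)\times\{-,+\}\to\R\setminus\Q$ of an open interval $(r,s)$ in the irrationals is the set of all $(\alpha,\ast)$ with $r<_i\alpha<_i s$, which by Observation \ref{rem_cyclicIrr} is the $\tau_I$-open interval $((r,-),(s,+))$ (equivalently, a union of $\tau_I$-basic intervals), hence open. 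Therefore the sign-forgetting map is continuous, and so is its product with the identity on $[0,\infty)$, giving continuity of $\Pi$. Now $\psi\circ\pi\circ\tilde\psi^{-1}=\Pi|_{B_\pm}$ (restriction of a continuous map to a subspace, landing in the subspace $B\subset[0,\infty)\times(\R\setminus\Q)$), so it is continuous; composing with homeomorphisms $\psi^{-1}$ and $\tilde\psi$ yields continuity of $\pi$.

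\medskip

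\textbf{Continuity of $\tilde g$.} Write $h\defeq\tilde\psi\circ\tilde g\circ\tilde\psi^{-1}\colon B_\pm\to B_\pm$; concretely $h(x,y,\ast)=(G(x,y),\ast)$ where $G=\psi\circ g\circ\psi^{-1}$ is the $B$-side model of $g$ restricted to $B$. Since $g$ is continuous and $\psi$ a homeomorphism, $G\colon B\to B$ is continuous, and since $g$ preserves external addresses up to the order-preserving correspondence (Observation \ref{obs_inverse_CB}, and the fact that $g|_\eta$ maps each hair bijectively to another hair), $G$ maps the hair at $y$ to the hair at some $y'=\rho(y)$, where $\rho\colon\{\text{irrationals carrying hairs}\}\to(\text{same set})$ is the induced map on the second coordinate; $\rho$ is continuous as the second component of the continuous map $G$. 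The only subtlety is the sign coordinate: one must check that $h$ does not "tear" the topology $\tau_I$, i.e. that the preimage under $h$ of a $\tau_I$-basic interval is open in $B_\pm$. A $\tau_I$-basic open interval in the target is (intersected with $B$) of the form $\{(x,y',\ast):[(a,\ast),(y',\ast),(b,\star)]_I\}$; pulling back through $h$ replaces the second-coordinate condition on $y'=\rho(y)$ by the corresponding condition on $y$, using that $\rho$ is a restriction of the order-preserving self-correspondence of irrationals induced by $g$ — this is exactly the content of item \ref{item:addr} of Theorem \ref{thm_CB} transported to the brush, together with Proposition \ref{prop_mapC}. Since that correspondence is order-preserving and the sign is left untouched, $h^{-1}$ of a basic interval is again a union of basic intervals crossed with an open subset of $[0,\infty)$, hence open. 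So $h$ is continuous, and therefore $\tilde g=\tilde\psi^{-1}\circ h\circ\tilde\psi$ is continuous.

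\medskip

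\textbf{Main obstacle.} The genuinely delicate point is the behaviour at the sign coordinate when $\rho$ (equivalently $g$ on addresses) is not injective on the relevant irrationals — that is, around an address $\s$ where $J_\s$ is "hit" by critical dynamics. One must verify that the order-preserving correspondence of Theorem \ref{thm_CB}\ref{item:addr} still guarantees that the $\tau_I$-preimage of a basic interval is $\tau_I$-open; the worry is a point $(s,-),(s,+)$ sitting in an open set that, after pulling back, gets "split" across a branch point. Here one uses precisely Observation \ref{rem_cyclicIrr}: an open set of $\tau_I$ containing both $(s,-)$ and $(s,+)$ contains a whole interval $((r,\ast),(t,\ast))$ around them, and order-preservation of the address correspondence ensures the preimage inherits this two-sided "fattening". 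Everything else is a routine transport of topology through the explicitly defined homeomorphisms $\psi$, $\tilde\psi$ and $\mathcal C$.
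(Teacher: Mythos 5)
Your reduction to $B_\pm$ via $\tilde\psi$, and your treatment of $\pi$ (the sign-forgetting map is continuous because the preimage of an interval of irrationals is a basic $\tau_I$-interval — it is $((r,+),(s,-))$ rather than $((r,-),(s,+))$, but the conclusion stands), follow the same route as the paper and are fine. The gap is in the continuity of $\tilde g$, precisely at the step you justify by ``order-preservation of the address correspondence'' of Theorem \ref{thm_CB}\ref{item:addr} together with Proposition \ref{prop_mapC}. Neither result is about the action of $g$ on its own hairs: Theorem \ref{thm_CB}\ref{item:addr} concerns $\theta$, i.e.\ the correspondence between addresses of $g$ and addresses of $f$, and Proposition \ref{prop_mapC} relates $\Addr(g)_\pm$ to $(\R\setminus\Q)\times\{-,+\}$. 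There is no globally ``order-preserving self-correspondence of irrationals induced by $g$'': on addresses $g$ acts as the shift $\sigma$, which is neither injective nor order-preserving. What is actually needed — and what the paper supplies — is that $h=\psi\circ g\circ\psi^{-1}$ \emph{locally} preserves the order of the hairs of $B$, and this is deduced from $g$ being of disjoint type, hence $J(g)\cap\Crit(g)=\emptyset$, hence $g$ locally injective (and holomorphic) on $J(g)$. Without some such argument, mere continuity of your map $\rho$ is not enough: a continuous hair map that locally reversed the order would break openness of $h^{-1}$ of a signed basic interval.

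Relatedly, your ``main obstacle'' paragraph addresses the easy case: if an open target set contains \emph{both} $(s,-)$ and $(s,+)$, then by Observation \ref{rem_cyclicIrr} it contains a sign-saturated interval, and its preimage is simply $\mathcal{P}^{-1}(h^{-1}(\mathcal{P}(\cdot)))$ of an open set, hence open — this is the paper's first case. The genuinely delicate case, which your argument does not handle, is when the target neighbourhood $V_x$ of $\tilde h(x)=(t,r,+)$ is one-sided at the hair $r$ (it contains the $+$ copy of that hair but not the $-$ copy, i.e.\ its lower ``endpoint'' is $(r,-)$). Then the candidate preimage neighbourhood of $x$ consists of one signed copy of the hair through $x$ together with a one-sided fan of nearby hairs, and to see that this is open in $\tau_{B_\pm}$ one must know that hairs on the correct side of the hair through $x$ are mapped by $h$ to hairs on the correct side of $r$ — again exactly the local order-preservation coming from local injectivity of $g$ on $J(g)$. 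With that ingredient added, the rest of your argument goes through essentially as in the paper.
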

\begin{proof}
Let $\tilde{\psi}\colon J(g)_{\pm} \to B_\pm$ and $\psi \colon J(g)\rightarrow B$ be the homeomorphisms from \ref{dis_topology_model}. Since proving continuity of $\pi$ is equivalent to proving continuity of $\mathcal{P}\defeq (\psi\circ \pi \circ \tilde{\psi}^{-1})\colon B_\pm \rightarrow B$, we do the latter. For any $(t,r,\ast)\in B_\pm$,
\begin{equation} \label{eq_P}
\mathcal{P}(t,r,\ast)=(\psi\circ\pi\circ\tilde{\psi}^{-1})(x)= (\psi\circ\pi)(\psi^{-1}(x), \ast)=(\psi\circ\psi^{-1})(x)=(t,r).
\end{equation}
Fix $x=(t,r,\ast)\in B_\pm$, any $\epsilon>0$ and let $\D_{\epsilon}(t,r)$ be the (Euclidean) ball of radius $\epsilon$ centred at $\mathcal{P}(x)$. We can find a pair of irrational numbers $r_1<r<r_2$ such that the rectangle $(t-\epsilon/2,t+\epsilon/2 )\times (r_1, r_2) \subset \D_{\epsilon}(t,r)$. Then, $R\defeq ((t-\epsilon/2,t+\epsilon/2) \times ((r_1,+), (r_2,-))\cap B_\pm)$ is an open subset of $B_\pm$ containing $x$ and such that 
$$\mathcal{P}\left(R\right)=(t-\epsilon/2,t+\epsilon/2 )\times (r_1, r_2)\subset \D_{\epsilon}(t,r),$$
and so $\mathcal{P}$ is continuous. Similarly, proving that $\tilde{g}\colon J(g)_\pm \rightarrow J(g)_\pm$ is continuous is equivalent to proving that $\tilde{h}\defeq \tilde{\psi}\circ \tilde{g}\circ \tilde{\psi}^{-1}\colon B_\pm \rightarrow B_\pm$ is continuous. For any $x=(t,r,\ast)\in B_\pm$,
$$\tilde{h}(x)=(\tilde{\psi}\circ\tilde{g}\circ\tilde{\psi}^{-1})(x)=(\tilde{\psi}\circ\tilde{g})(\psi^{-1}(t,r), \ast)=((\psi\circ g \circ\psi^{-1})(t,r), \ast).$$
That is, $\tilde{h}(t,r,\ast)= (h(t,r), \ast)$, where $h\defeq \psi\circ g\circ\psi^{-1}\colon B \rightarrow B$ is a continuous function.

Fix $x\in B_\pm$ and let $V_x$ be an open neighbourhood of $\tilde{h}(x)\eqdef(t,\alpha,\ast)$. We may assume without loss of generality that $V_x$ is of the form $V_x\defeq (t_1,t_2) \times (w,y)$, with $t_1< t <t_2\in \R$ and $w=(r, \circledast), y=(s,\star) \in B_\pm$ such that $r\leq_{_i} \alpha \leq_{_i} s$. Let $\mathcal{P}\colon B_\pm \rightarrow B$ be the function specified in \eqref{eq_P}. If, $r \lneq_{_i}\alpha \lneq_{_i} s$, then $(t,\alpha, -), (t,\alpha, +) \in V_x$, and by Observation~\ref{rem_cyclicIrr}, there exists a pair of irrationals $\alpha^-, \alpha^+$ so that $r\leq \alpha^-<\alpha<\alpha^+\leq s$ and $H\defeq (t_1,t_2) \times ((\alpha^-,+), (\alpha^+,-)) \subset V_x$. In particular, $\mathcal{P}(H)$ is open and $(\mathcal{P}^{-1}\circ\mathcal{P})(H)=H$. Since both $h$ and $\mathcal{P}$ are continuous functions, $X\defeq (\mathcal{P}^{-1}\circ h^{-1}\circ \mathcal{P})(H)$ is an open set in $B_\pm$, and by construction, $X$ is a neighbourhood of $x$ such that $\tilde{h}(X)\subset V_x$. Otherwise, either $r=\alpha$, which implies that for $V_x$ being an open neighbourhood of $\tilde{h}(x)$, $w$ must be of the form $w=(r, -)$ and $\tilde{h}(x)=(t, r, +)$, or by the same reasoning, $y=(s,+)$ and $\tilde{h}(x)=(t,s,-)$. We only argue continuity in the first case and remark that the second case can be dealt with analogously. Define $R\defeq (t_1,t_2) \times (r, -)$ and $H\defeq (t_1,t_2) \times ((r,+), (s,\star)) \subset V_x$. Note that $\mathcal{P}(H)$ is an open set and $\mathcal{P}(R)\subset \partial \mathcal{P}(H)$. Since $g$ is of disjoint type, $J(g) \cap \Crit(g)=\emptyset$, and so $g$ is locally injective in $J(g)$. Therefore, so is $h$, which implies that $h$ preserves locally the order of the hairs of the straight brush $B$. Consequently, $(h^{-1}\circ\mathcal{P})(R) \subset \partial (h^{-1} \circ\mathcal{P})(H).$ By construction and a similar argument to that in the previous case, the set $(h^{-1}\circ\mathcal{P})(R)\times \{-\}\cup (\mathcal{P}^{-1}\circ h^{-1} \circ \mathcal{P})(H)$ is an open neighbourhood of $x$ whose image under $\tilde{h}$ lies in $V_x$, and continuity follows.
\end{proof}

We conclude this section  with some topological properties of model spaces that will be of use to us in section \ref{sec_semiconj} when proving surjectivity of the function $\phi$ from Theorem~\ref{thm_main_intro}.
\begin{lemma}[Compactification of the model space]\label{compactification} Let $f\in \CB$ and let $J(g)_\pm$ be a model space for $f$. Then, $J(g)_\pm$ admits the one point (or Alexandroff)-compactification $\tau_\infty$. The new compact space $(J(g)_{\pm}\cup\lbrace\tilde{\infty}\rbrace,\tau_\infty)$ is a sequential space. Moreover, given a sequence $\lbrace x_k \rbrace_{k\in \N} \subset J(g)_{\pm}\cup\lbrace\tilde{\infty}\rbrace$, 
\begin{equation}\label{limits}
 \lim_{k \rightarrow \infty} x_k=\tilde{\infty} \quad \Longleftrightarrow \quad \lim_{k \rightarrow \infty} \pi(x_k)=\infty. 
\end{equation} 
\end{lemma}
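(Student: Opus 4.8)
The plan is to verify directly that $J(g)_\pm$ satisfies the hypotheses needed for the Alexandroff compactification to behave as claimed, and then to analyze convergence to the added point. First I would recall that $J(g)_\pm$ is homeomorphic (via $\tilde\psi$) to $B_\pm \subseteq \M$, so it suffices to work in $B_\pm$ with the subspace topology $\tau_{B_\pm}$ from $\tau_\M$. Since $\tau_\M$ is the product of $[0,\infty)$ (usual topology) with $(\R\setminus\Q)\times\{-,+\}$ (cyclic order topology $\tau_I$), and the first coordinate projection is exactly $\pi$ up to the identifications, the natural candidate is that $\tilde\infty$ is "where the $\pi$-coordinate tends to $\infty$''. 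For the one-point compactification to exist (in the sense that the resulting space is Hausdorff, or at least so that the standard open sets $\{\tilde\infty\}\cup(J(g)_\pm\setminus C)$ for $C$ closed and such that... form a topology), I need $J(g)_\pm$ to be locally compact Hausdorff. Hausdorffness of $B_\pm$ follows from Hausdorffness of $[0,\infty)$ and of $(\R\setminus\Q)\times\{-,+\},\tau_I)$ — the latter because the linear order $<$ in \eqref{eq_orderB} is separative and one can separate any two distinct points by disjoint open intervals (using Observation \ref{rem_cyclicIrr} to handle the case of two points $(s,-),(s,+)$ sharing an irrational). Local compactness is the key point, and I expect it to be the main obstacle.

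For local compactness I would argue as follows. A basic open neighbourhood of a point $(t,r,\ast)\in B_\pm$ has the form $U = (t-\delta,t+\delta)\times ((r_1,+),(r_2,-))\cap B_\pm$ for irrationals $r_1<r<r_2$ (adjusting at $t=0$); by Observation \ref{rem_cyclicIrr} one can instead take the closed-and-open interval $V = ((r_1,-),(r_2,+))$ in the second coordinate, and $[t-\delta/2,t+\delta/2]$ in the first. I claim $\overline{[t-\delta/2,t+\delta/2]\times V \cap B_\pm}$ is compact. The set $[t-\delta/2,t+\delta/2]$ is compact; $V$ is a closed subset of $(\R\setminus\Q)\times\{-,+\}$. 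However, $(\R\setminus\Q)\times\{-,+\}$ with $\tau_I$ is \emph{not} locally compact (irrationals aren't), so this product need not be compact. The resolution must use that $B$ is a straight brush: the set $B_\pm \cap ([t-\delta/2,t+\delta/2]\times V)$, i.e. the portion of the brush with first coordinate in a compact interval bounded away from... — actually the crucial structural fact is that $B$ is a \emph{closed} subset of $\R^2$ and the hair-endpoint function $y\mapsto t_y$ is such that $\{(x,y)\in B: x \le c\}$ is compact for each $c$ (this is standard for straight brushes, from closedness plus the hairs being $[t_y,\infty)\times\{y\}$). Then $B_\pm\cap([0,c]\times V)$ for $V$ a closed order-interval is a closed subset of a set that is compact in the \emph{Euclidean} topology on $B\times\{-,+\}$; one checks the $\tau_\M$-subspace topology and the Euclidean-product topology agree on such a set, since on a set with first coordinate in $[0,c]$ and second coordinate in a fixed closed interval, the order topology $\tau_I$ and the Euclidean topology on irrationals coincide. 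Hence local compactness holds, and the Alexandroff compactification $(J(g)_\pm\cup\{\tilde\infty\},\tau_\infty)$ exists and is compact Hausdorff.

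For the sequential space claim: a compact Hausdorff space need not be sequential in general, so I would instead exhibit a direct argument. $J(g)_\pm$ is homeomorphic to $B_\pm$, and I'd show $B_\pm$ is first countable — indeed each point $(t,r,\ast)$ has the countable neighbourhood base $\{(t-1/n,t+1/n)\times((r-1/n,+),(r+1/n,-))\cap B_\pm\}$ using a sequence of rationals... no, irrationals approaching $r$ from each side (possible by density of irrationals), which is a countable base. A first countable space is sequential, and the one-point compactification of a first countable, locally compact Hausdorff space is first countable at $\tilde\infty$ iff the space is hemicompact, i.e. admits a countable family of compact sets cofinal under inclusion; here $\{K_n := \overline{B_\pm \cap ([0,n]\times (\R\setminus\Q)\times\{-,+\})}\}$ is such a family since any compact $C\subset B_\pm$ has bounded $\pi$-image (as $\pi$ is continuous, by Proposition \ref{prop_contmodel}) hence lies in some $K_n$. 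Therefore $J(g)_\pm\cup\{\tilde\infty\}$ is first countable, hence sequential.

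Finally for \eqref{limits}: $(\Leftarrow)$ if $\pi(x_k)\to\infty$ then for each $n$ eventually $\pi(x_k)>n$, so $x_k\notin K_n$ eventually; since the $K_n$ are cofinal among compact sets, $x_k$ eventually leaves every compact set, which by definition of $\tau_\infty$ means $x_k\to\tilde\infty$. $(\Rightarrow)$ if $x_k\to\tilde\infty$ but $\pi(x_k)\not\to\infty$, then along a subsequence $\pi(x_{k_j})\le c$ for some $c$, so $x_{k_j}\in K_{\lceil c\rceil}$, a compact set; then $(J(g)_\pm\cup\{\tilde\infty\})\setminus K_{\lceil c\rceil}$ is an open neighbourhood of $\tilde\infty$ missing infinitely many terms, contradicting $x_k\to\tilde\infty$. (One should also note the case $x_k=\tilde\infty$ for infinitely many $k$ is handled by the convention $\pi(\tilde\infty)=\infty$, or simply excluded as vacuous on that subsequence.) The main obstacle, as flagged, is establishing local compactness — specifically that the $\tau_\M$-subspace topology on the relevant bounded pieces of $B_\pm$ agrees with the Euclidean one and that straight-brush closedness gives compactness of those pieces; everything else is routine point-set topology.
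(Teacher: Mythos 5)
Your overall architecture (work in $B_\pm$, Hausdorff from the factors, local compactness, first countability via hemicompactness, then read off \eqref{limits}) matches the paper's, but the step you yourself flag as the crux — local compactness — is where the argument breaks. The claim that on a piece of $B_\pm$ with first coordinate in $[0,c]$ and second coordinate in a fixed closed order-interval ``the $\tau_\M$-subspace topology and the Euclidean-product topology agree'' is false. By the third bullet of Definition \ref{def_brush}, for every $(t,y)\in B$ there are hairs $\gamma_n>y$ with $\gamma_n\to y$ and $t_{\gamma_n}\to t_y$; hence \emph{every} $\tau_{B_\pm}$-neighbourhood of $(t,y,+)$ (whose second-coordinate interval necessarily extends past $(y,+)$) contains points $(x,\gamma_n,-)$ carrying the opposite sign, whereas in the Euclidean product with $\{-,+\}$ discrete the set of $+$-labelled points is an open neighbourhood of $(t,y,+)$. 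So the two topologies disagree on every neighbourhood of every brush point, and compactness cannot be transferred from the Euclidean picture; indeed, if it could, the bounded pieces would be metrizable and second countable, contrary to the split structure (cf.\ the remark after Definition \ref{defn_modelspace} that $(\M,\tau_\M)$ is not second countable). The paper instead proves compactness of closures of bounded sets directly: given an open cover, pair up the sets covering $(t,s,-)$ and $(t,s,+)$, use Observation \ref{rem_cyclicIrr} to see that the union of each pair projects under $\mathcal{P}$ onto a set containing a genuine open neighbourhood of $(t,s)$, apply Heine--Borel to the closed brush $B\subset\R^2$, and pull a finite subcover back. Some such bare-hands covering argument (or an order-theoretic compactness argument for split intervals) is unavoidable here; your transfer principle does not supply it. (Your auxiliary assertion that $\{(x,y)\in B:x\le c\}$ is compact is also false as stated, since the $y$-coordinates are unbounded, though you only use the version with both coordinates restricted.)

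The same gap propagates into your treatment of $\tilde\infty$ and of \eqref{limits}. Your cofinal family $K_n$, defined by bounding only the first coordinate, need not consist of compact sets (hairs with $t_y\le n$ can occur at arbitrarily large $|y|$), so hemicompactness is not established by it, and the step ``$\,\pi(x_{k_j})\le c$ implies $x_{k_j}$ lies in the compact set $K_{\lceil c\rceil}$'' is unjustified; it also quietly identifies $\pi(x_k)\to\infty$ (meaning $|\pi(x_k)|\to\infty$ in $\C$, equivalently escape to infinity in $\R^2$ of the brush image) with the first coordinate tending to infinity, which is a strictly stronger condition. The correct exhaustion is by sets bounded in \emph{both} coordinates, as in the paper's $C_N=[0,N]\times((-N,-),(N,+))$, whose compactness is again exactly the nontrivial fact above; with those sets in place, your first-countable-hence-sequential reasoning and the two implications in \eqref{limits} go through essentially as in the paper.
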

\begin{proof}
We show that $J(g)_{\pm}$ admits a one-point compactification by proving that $J(g)_{\pm}$ is a locally compact, Hausdorff space. Equivalently, since these are topological properties (preserved under homeomorphisms), we instead show that a corresponding double brush $(B_\pm,\tau_{B_{\pm}})$, see \ref{dis_topology_model}, is locally compact and Hausdorff. Note that the space $(\M,\tau_{\M})$ defined in \ref{dis_topology_model} is Hausdorff: for any $(t,s)\in \R^2$, 
\begin{equation*}
\begin{split}
(t,s,-) \in V_{-}\defeq (t-t/2, t+1)\times((s-1,-),(s,+)), \\ (t,s,+) \in V_{+}\defeq (t-t/2, t+1)\times ((s,-),(s+1,+)),
\end{split}
\end{equation*}
and $V_{-} \cap V_+=\emptyset$. Disjoint neighbourhoods of any pair of points in $\M$ can be constructed similarly. Since being Hausdorff is a hereditary property, $(B_\pm,\tau_{B_{\pm}})$ is Hausdorff.

We prove local compactness of $(B_\pm,\tau_{B_{\pm}})$ by showing that for each $x \in B_\pm$ and each open bounded neighbourhood $U_x \ni x$, the closure of $U_x$ in $B_\pm$ , that we denote by $\overline{U}_x$, is compact. With that purpose, let $\mathcal{U}=\lbrace{U_i}\rbrace_{i \in I}$ be an open cover of $\overline{U}_x$. By definition, $B_\pm\setminus \overline{U}_x$ is an open set, and so 
$$ \mathcal{U}'\defeq \lbrace{U_i}\rbrace_{i\in I} \cup \lbrace B_\pm\setminus \overline{U}_x \rbrace$$
is an open cover of $B_\pm$. Hence, for each $(t,s, \ast) \in B_\pm$, there exists $U_{(t,s, \ast)} \in \mathcal{U}'$ such that $(t,s,\ast)\in U_{(t,s, \ast)}.$ For each $(t,s) \in B$, denote 
$$V_{(t,s)}\defeq U_{(t,s, -)} \cup U_{(t,s, +)} \quad \text{ and }\quad \mathcal{V}\defeq \{V_{(t,s)}\}_{(t,s) \in B}.$$
Let $\mathcal{P}\colon B_\pm \rightarrow B$ be the projection function specified in \eqref{eq_P}, and observe that $\mathcal{P}(V_{(t,s)})$ might not be open, but since both $\{(t,s, -), (t,s, +) \}\subset V_{(t,s)}$, by Observation \ref{rem_cyclicIrr}, $\mathcal{P}(V_{(t,s)})$ always contains an open neighbourhood $W_{(t,s)} \ni (t,s),$ that we take to be $\mathcal{P}(V_{(t,s)})$ when this set is open. Then, 
$$\mathcal{W}\defeq \lbrace W_{(t,s)} \rbrace_{(t,s)\in B}$$
forms an open cover of $B\subset \R^2$, and in particular of the closure $\overline{\mathcal{P}(\overline{U}_x)}$. Note that $\mathcal{P}(\overline{U}_x)$ is a bounded set, since $\overline{U}_x$ is bounded and we showed in the proof of Proposition~\ref{prop_contmodel} that $\mathcal{P}$ is continuous. Since the straight brush $B\subset \R^{2}$ satisfies the Heine-Borel property, there exists a finite subcover $\tilde{\mathcal{W}}=\lbrace{W_k}\rbrace_{k \in K} \subset \mathcal{W}$ of $\overline{\mathcal{P}(\overline{U}_x)}$. For each $k\in K$, choose $V_k \in \V$ such that $W_k \subseteq \mathcal{P}(V_k)$ and denote $$\tilde{\mathcal{V}}\defeq \{V_k\}_{k \in K} \quad \text{ and } \quad \tilde{ \mathcal{U}} \defeq \lbrace U_{(t,s, \ast)} \in \mathcal{U} \colon U_{(t,s, \ast)} \subset V_{k} \in \tilde{ \mathcal{V}}\rbrace.$$
By definition, the set $\tilde{ \mathcal{V}}$ has the same number of elements as ${\tilde{\mathcal{W}}}$ does, and $\tilde{ \mathcal{U}}$ has at most double, and so a finite number. Thus, if we show that $\tilde{\mathcal{U}}$ is an open subcover of $\overline{U}_x$, then we will have shown that $(B_\pm,\tau_{B_{\pm}})$ is locally compact. Note that $\mathcal{P}^{-1}(\tilde{ \mathcal{W}})$ is an open cover of $\mathcal{P}^{-1}(\overline{\mathcal{P}(\overline{U}_x)}) \supset \overline{U}_x$, and hence so is $\tilde{ \mathcal{V}}$. Therefore, for each $(t, s, \ast )\in \overline{U}_x$, there exists $k\in K$ such that $(t, s, \ast )\in V_k= U_{(t', s',+)}\cup U_{(t',s',-)}$ for some $(t',s')\in B$. If both $U_{(t',s', -)}=\lbrace B_\pm\setminus \overline{U}_x \rbrace= U_{(t',s', +)}$, then $V_k \cap \overline{U}_x=\emptyset$, which contradicts $ (t, s, \ast )\in V_k$. Hence, $(t, s, \ast ) \in U_{(t',s',\ast)}\in \tilde{\mathcal{U}}$ for some $\ast \in \{-,+\}$, and so $\tilde{ \mathcal{U}}$ is an open subcover of $\overline{U}_x$.

We have shown that $B_\pm$ admits a (Hausdorff) one-point compactification, that we denote by $B_\pm \cup\lbrace \tilde{\infty} \rbrace $. We will see that $B_\pm \cup\lbrace \tilde{\infty}\rbrace $ is a sequential space by proving that more generally, it is a first-countable space, i.e., each point of $B_\pm \cup\lbrace \tilde{\infty} \rbrace$ has a countable neighbourhood basis. By definition, the open sets in $B_\pm \cup\lbrace \tilde{\infty} \rbrace $ are all sets that are open in $B_\pm$, together with all sets of the form $(B_\pm \setminus C)\cup \lbrace \tilde{\infty} \rbrace$, where $C$ is any closed and compact set in $B_\pm$. For each $(t,s,\ast)\in B_\pm$, a local basis can be chosen to be the collection of sets $\{U_n\}_{n\in \N}$ given by
$$ U_{n}\defeq \left((t-1/n, t+1/n)\times \left((s-1/n,-), (s+1/n,+) \right)\right)\cap B_\pm.$$
In order to find a local basis for $\tilde{\infty}$, for each $N \in \N$, let
$$ C_{N}\defeq [0,N]\times ((-N, -), (N, +)),$$
and note that by Observation \ref{rem_cyclicIrr}, $C_{N}$ equals its closure. Thus, reasoning as in the first part of the proof of this lemma, one can see that $C_{N}$ is compact, and therefore $ \left\lbrace (B_\pm \setminus C_N) \cup \lbrace \tilde{\infty} \rbrace \right\rbrace_{N \in \N}$ forms a local basis for $\lbrace \tilde{\infty}\rbrace$. Thus, we have shown that $B_\pm \cup\lbrace \tilde{\infty} \rbrace$ is a sequential space.

Finally, if $\lbrace x_k \rbrace_{k\in \N} \subset B\cup\lbrace\tilde{\infty}\rbrace$ is a sequence such that $x_k\to\tilde{\infty}$ as $k\rightarrow \infty$, then for every $N\in \N$, there exists $K(N)\in \N$ such that for all $k \geq K(N)$, $x_k \in (B_\pm \setminus C_N)$. Hence, for all $k\geq K(N)$, $\mathcal{P}(x_k) \in \mathcal{P}(B_\pm \setminus C_N) \subset \R^2 \setminus ([0,N] \times [-N,N])$. Therefore, 
$$ N \to \infty \iff K(N) \to \infty \iff x_k \to \tilde{\infty} \iff \mathcal{P}(x_k) \to \infty,$$ 
and the last claim of the statement follows.
\end{proof}
\section{The semiconjugacy}\label{sec_semiconj}
In this last section we prove a more precise version of Theorem~\ref{thm_main_intro}, namely, Theorem~\ref{thm_main}. We start by bringing together the parameters and functions that will be involved.

\begin{discussion}[Combination of previous results]\label{dis_setting_semi}
Let $f \in \CB$ be an arbitrary but fixed, strongly postcritically separated function. Let us fix a pair of hyperbolic orbifolds $\Or=(S,\nu)$ and $\Ort=(\widetilde{S},\tilde{\nu})$ associated to $f$ provided by Theorem~\ref{thm_orbifolds}. In particular, by Theorem~\ref{thm_orbifolds}\ref{item_a_assocorb}, $S=\C \setminus \overline{U}$, where $\overline{U}$ is a, possibly empty, compact set. By Observation \ref{rem_setting} and since $f\in \B$, we can fix $K>0$ sufficiently large so that
\begin{equation}\label{eq_dk2}
\lbrace P_J \setminus I(f),\overline{U},S(f), 0, f(0) \rbrace \subset \overline{\D_{K/2}} \subset \D_K.
\end{equation}

Let us choose a constant $L>K$ such that $f(\D_K) \Subset \D_L$, and apply Theorem~\ref{thm_CB} to $f$ with this constant. This provides us with a disjoint type map  $g(z)\defeq f(\lambda z)$, for some $\lambda\defeq \lambda(L) \in \C\setminus \{0\}$, and a continuous function $\theta\colon J(g)\to J(f)$, that, in particular, conjugates $g$ and $f$ on subsets of their Julia sets. Moreover, $\theta$ determines an order-preserving bijection between any choice of $\Addr(g)$ and $\Addr(f)$, and the connected components of $\theta(J(g))$ form a valid initial configuration in the sense of Definition \ref{def_initial_conf}, see Observation \ref{obs_CB_conf}. We denote this initial configuration 
\begin{equation}\label{eq_initial_semi}
\left\{\gamma^0_\s\right\}_{\s \in \Addr(f)}, \quad \text{ where }  \gamma^0_{\s} \defeq \theta (J_{\s}) \quad \text{ for each } \s \in \Addr(g).
\end{equation}
Since $f$ is strongly postcritically separated, $f$ satisfies all the assumptions in Theorem~\ref{thm_inverse_hand}, see Observation \ref{rem_setting}. Thus, we can fix the choice of $\Addr(f)$ provided by this theorem, and define the space of signed external addresses $\Addr(f)_\pm$ from $\Addr(f)$, see Definition~\ref{defn_signedaddr}. Using the initial configuration in \eqref{eq_initial_semi}, we define canonical rays $\Gamma(\s, \ast)$ and the set of curves
$$\mathcal{C}\defeq \left\{\gamma^n_{(\s, \ast)} : n\geq 0 \text{ and }(\s, \ast)\in \Addr(f)_\pm\right\}$$ provided by Theorem~\ref{thm_signed}. In particular, by Theorem~\ref{thm_inverse_hand}, for each curve $\gamma^n_{(\s, \ast)} \in \mathcal{C}$, there exists a neighbourhood $\tau_n(\s, \ast) \supset \gamma^n_{(\s, \ast)}$ where we can define an inverse branch of $f$
\begin{equation}\label{eq_inverse_semi}
 f^{-1,[n]}_{(\s,\ast)}\defeq \left(f\vert_{ \tau_n(\s, \ast)}\right)^{-1} \colon f(\tau_n(\s, \ast)) \rightarrow \tau_n(\s, \ast),
\end{equation}
as well as an inverse branch of $f^n$ provided by Observation \ref{obs_chain_inverse},
\begin{equation}\label{eq_inverse_semi2}
f^{-n}_{(\s,\ast)}\defeq \left(f^n\vert_{ \tau_n(\s, \ast)}\right)^{-1} \colon f^n(\tau_n(\s, \ast)) \rightarrow \tau_n(\s, \ast),
\end{equation}
having both of them properties we shall use later on. Next, using the function $g$, we fix a model space $(J(g)_\pm, \tau_J)$ for $f$ (see Definition \ref{defn_modelspace}) and the corresponding associated model function 
$$\tilde{g}\colon J(g)_\pm\rightarrow J(g)_\pm; \quad  \quad \tilde{g}(z, \ast)\mapsto(g(z),\ast).$$ In addition, 
$$\pi\colon J(g)_{\pm} \rightarrow J(g); \quad \quad \pi(z, \ast)\mapsto z,$$
is the projection function from Proposition~\ref{prop_contmodel}. Finally, for each $(\s, \ast) \!\in \!\Addr(g)_\pm$, we denote 
\begin{equation}\label{def_Jsmodel}
 J_{(\s, \ast)}\defeq J_\s \times \{\ast\} \quad \text{ and } \quad I_{(\s, \ast)}\defeq I_\s \times \{\ast\},
\end{equation}
where $I_\s=J_\s\cap I(f)$, see Observation \ref{obs_inverse_CB}. In particular, since $g$ is of disjoint type, 
\begin{equation}
J(g)_\pm=\bigcup_{(\s, \ast) \in \Addr(g)_\pm} J_{(\s,\ast)} \quad \text{ and } \quad \tilde{g}^n(J_{(\s, \ast)})=J_{(\sigma^n(\s), \ast)} \text{ for all } n \geq 0, 
\end{equation}
see Observation \ref{obs_addrz}. Moreover, for each $x \in J(g)_\pm$, $\addr(x)$ denotes the unique $(\s, \ast) \!\in \!\Addr(g)_\pm$ such that $x\in J_{(\s, \ast)}$, see Observation \ref{obs_addrz}.
\end{discussion} 

After setting in \ref{dis_setting_semi} the functions we shall use in the proof of Theorem~\ref{thm_main}, for ease of understanding, we now comment on the main ideas of this proof. For the functions $f$ and $\tilde{g}$ fixed in \ref{dis_setting_semi}, we aim to obtain the function $\phi \colon J(g)_\pm \rightarrow J(f)$ that semiconjugates them as a limit of functions $\phi_n\colon J(g)_\pm \rightarrow J(f)$, that are successively \textit{better approximations} of $\phi$. For each $x\in J(g)_\pm$ and each $n\geq 0$, roughly speaking, $\phi_n(x)$ is defined the following way: we iterate $x$ under the model function $\tilde{g}$ a number $n$ of times. In particular, if $\addr(x) = (\s, \ast)$, then $\tilde{g}^n(x)\subset J_{(\sigma^n(\s), \ast)}$. Then, we use the functions $\pi$ and $\theta$ to \textit{move} from the space $J(g)_\pm$ to the dynamical plane of $f$. More precisely, if $\pi(x)=z$, then $\theta(g^n(z)) \in \gamma^0_{(\sigma^n(\s), \ast)}$. Then, we use the composition of $n$ inverse branches of $f$ of the form specified in \eqref{eq_inverse_semi} to obtain, thanks to Theorem~\ref{thm_inverse_hand} and Observation \ref{obs_chain_inverse}, a point in $\gamma^n_{(\s, \ast)}$, that is $\phi_n(x)$; see Figure \ref{figure_defphi_n}.
\begin{figure}[htb]
	\centering
\begingroup%
  \makeatletter%
  \providecommand\color[2][]{%
    \errmessage{(Inkscape) Color is used for the text in Inkscape, but the package 'color.sty' is not loaded}%
    \renewcommand\color[2][]{}%
  }%
  \providecommand\transparent[1]{%
    \errmessage{(Inkscape) Transparency is used (non-zero) for the text in Inkscape, but the package 'transparent.sty' is not loaded}%
    \renewcommand\transparent[1]{}%
  }%
  \providecommand\rotatebox[2]{#2}%
  \newcommand*\fsize{\dimexpr\f@size pt\relax}%
  \newcommand*\lineheight[1]{\fontsize{\fsize}{#1\fsize}\selectfont}%
  \ifx\svgwidth\undefined%
    \setlength{\unitlength}{396.8503937bp}%
    \ifx\svgscale\undefined%
      \relax%
    \else%
      \setlength{\unitlength}{\unitlength * \real{\svgscale}}%
    \fi%
  \else%
    \setlength{\unitlength}{\svgwidth}%
  \fi%
  \global\let\svgwidth\undefined%
  \global\let\svgscale\undefined%
  \makeatother%
  \begin{picture}(1,0.64285714)%
    \lineheight{1}%
    \setlength\tabcolsep{0pt}%
    \put(0,0){\includegraphics[width=\unitlength,page=1]{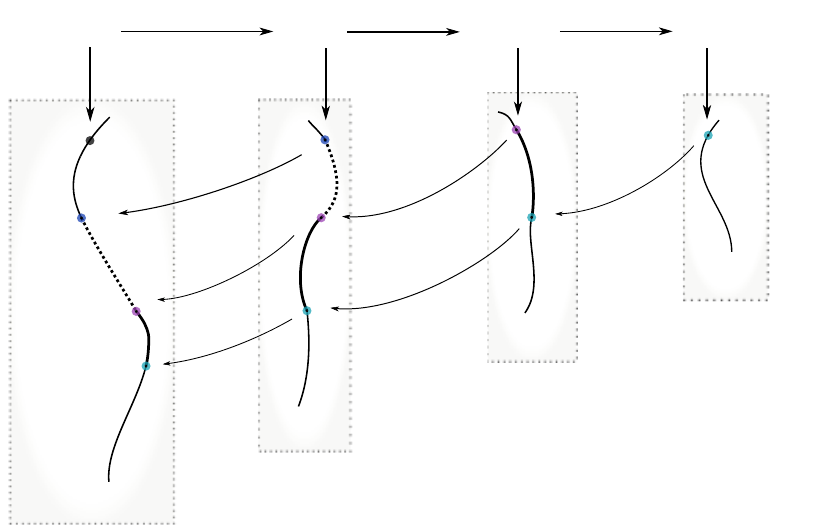}}%
    \put(0.06885787,0.55097077){\color[rgb]{0,0,0}\makebox(0,0)[lt]{\lineheight{1.25}\smash{\begin{tabular}[t]{l}$\varphi_0$\\\end{tabular}}}}%
    \put(0.10365825,0.59811102){\color[rgb]{0,0,0}\makebox(0,0)[lt]{\lineheight{1.25}\smash{\begin{tabular}[t]{l}$x$\end{tabular}}}}%
    \put(0.24948594,0.61641088){\color[rgb]{0,0,0}\makebox(0,0)[lt]{\lineheight{1.25}\smash{\begin{tabular}[t]{l}$\tilde{g}$\end{tabular}}}}%
    \put(0.3476481,0.59800039){\color[rgb]{0,0,0}\makebox(0,0)[lt]{\lineheight{1.25}\smash{\begin{tabular}[t]{l}$\tilde{g}(x)$\end{tabular}}}}%
    \put(0.46950918,0.61812749){\color[rgb]{0,0,0}\makebox(0,0)[lt]{\lineheight{1.25}\smash{\begin{tabular}[t]{l}$\tilde{g}$\end{tabular}}}}%
    \put(0.58302438,0.5979423){\color[rgb]{0,0,0}\makebox(0,0)[lt]{\lineheight{1.25}\smash{\begin{tabular}[t]{l}$\tilde{g}^2(x)$\end{tabular}}}}%
    \put(0.58211806,0.55474563){\color[rgb]{0,0,0}\makebox(0,0)[lt]{\lineheight{1.25}\smash{\begin{tabular}[t]{l}$\varphi_0$\\\end{tabular}}}}%
    \put(0.02707864,0.46642075){\color[rgb]{0,0,0}\makebox(0,0)[lt]{\lineheight{1.25}\smash{\begin{tabular}[t]{l}$\varphi_0(x)$\\\end{tabular}}}}%
    \put(0.01466578,0.37245731){\color[rgb]{0,0,0}\makebox(0,0)[lt]{\lineheight{1.25}\smash{\begin{tabular}[t]{l}$\varphi_1(x)$\\\end{tabular}}}}%
    \put(0.07802527,0.262754){\color[rgb]{0,0,0}\makebox(0,0)[lt]{\lineheight{1.25}\smash{\begin{tabular}[t]{l}$\varphi_2(x)$\\\end{tabular}}}}%
    \put(0.09477709,0.19495234){\color[rgb]{0,0,0}\makebox(0,0)[lt]{\lineheight{1.25}\smash{\begin{tabular}[t]{l}$\varphi_3(x)$\\\end{tabular}}}}%
    \put(0.40205596,0.47180553){\color[rgb]{0,0,0}\makebox(0,0)[lt]{\lineheight{1.25}\smash{\begin{tabular}[t]{l}$\varphi_0(\tilde{g}(x))$\\\end{tabular}}}}%
    \put(0.39013192,0.35351285){\color[rgb]{0,0,0}\makebox(0,0)[lt]{\lineheight{1.25}\smash{\begin{tabular}[t]{l}$\varphi_1(\tilde{g}(x))$\\\end{tabular}}}}%
    \put(0.37657855,0.2393222){\color[rgb]{0,0,0}\makebox(0,0)[lt]{\lineheight{1.25}\smash{\begin{tabular}[t]{l}$\varphi_2(\tilde{g}(x))$\\\end{tabular}}}}%
    \put(0.7309367,0.61927964){\color[rgb]{0,0,0}\makebox(0,0)[lt]{\lineheight{1.25}\smash{\begin{tabular}[t]{l}$\tilde{g}$\end{tabular}}}}%
    \put(0.82721269,0.59827995){\color[rgb]{0,0,0}\makebox(0,0)[lt]{\lineheight{1.25}\smash{\begin{tabular}[t]{l}$\tilde{g}^3(x)$\end{tabular}}}}%
    \put(0.3500939,0.5552378){\color[rgb]{0,0,0}\makebox(0,0)[lt]{\lineheight{1.25}\smash{\begin{tabular}[t]{l}$\varphi_0$\\\end{tabular}}}}%
    \put(0.81890286,0.55608284){\color[rgb]{0,0,0}\makebox(0,0)[lt]{\lineheight{1.25}\smash{\begin{tabular}[t]{l}$\varphi_0$\\\end{tabular}}}}%
    \put(0.63276978,0.48642789){\color[rgb]{0,0,0}\makebox(0,0)[lt]{\lineheight{1.25}\smash{\begin{tabular}[t]{l}$\varphi_0(\tilde{g}^2(x))$\\\end{tabular}}}}%
    \put(0.65488494,0.35706613){\color[rgb]{0,0,0}\makebox(0,0)[lt]{\lineheight{1.25}\smash{\begin{tabular}[t]{l}$\varphi_1(\tilde{g}^2(x))$\\\end{tabular}}}}%
    \put(0.86606505,0.46935196){\color[rgb]{0,0,0}\makebox(0,0)[lt]{\lineheight{1.25}\smash{\begin{tabular}[t]{l}$\varphi_0(\tilde{g}^3(x))$\\\end{tabular}}}}%
    \put(0.09621703,0.02858903){\color[rgb]{0,0,0}\makebox(0,0)[lt]{\lineheight{1.25}\smash{\begin{tabular}[t]{l}$\gamma^3_{(\underline{s}, *)}$\end{tabular}}}}%
    \put(0.32526093,0.11842221){\color[rgb]{0,0,0}\makebox(0,0)[lt]{\lineheight{1.25}\smash{\begin{tabular}[t]{l}$\gamma^2_{(\sigma(\underline{s}), *)}$\end{tabular}}}}%
    \put(0.59959205,0.22581635){\color[rgb]{0,0,0}\makebox(0,0)[lt]{\lineheight{1.25}\smash{\begin{tabular}[t]{l}$\gamma^1_{(\sigma^2(\underline{s}), *)}$\end{tabular}}}}%
    \put(0.83300808,0.3080798){\color[rgb]{0,0,0}\makebox(0,0)[lt]{\lineheight{1.25}\smash{\begin{tabular}[t]{l}$\gamma^0_{(\sigma^3(\underline{s}), *)}$\end{tabular}}}}%
    \put(0.1238784,-0.01879229){\color[rgb]{0.4,0.4,0.4}\makebox(0,0)[lt]{\lineheight{1.25}\smash{\begin{tabular}[t]{l}\fontsize{9pt}{1em}$\tau^3(\underline{s},*)$\end{tabular}}}}%
    \put(0.36134649,0.07061125){\color[rgb]{0.4,0.4,0.4}\makebox(0,0)[lt]{\lineheight{1.25}\smash{\begin{tabular}[t]{l}\fontsize{9pt}{1em}$\tau^2(\sigma(\underline{s}),*)$\end{tabular}}}}%
    \put(0.64123399,0.17912979){\color[rgb]{0.4,0.4,0.4}\makebox(0,0)[lt]{\lineheight{1.25}\smash{\begin{tabular}[t]{l}\fontsize{9pt}{1em}$\tau^1(\sigma^2(\underline{s}),*)$\end{tabular}}}}%
    \put(0.86875572,0.25616289){\color[rgb]{0.4,0.4,0.4}\makebox(0,0)[lt]{\lineheight{1.25}\smash{\begin{tabular}[t]{l}\fontsize{9pt}{1em}$\tau^0(\sigma^3(\underline{s}),*)$\end{tabular}}}}%
    \put(0.21181309,0.03307293){\color[rgb]{0,0,0}\makebox(0,0)[lt]{\begin{minipage}{0.00457828\unitlength}\raggedright \end{minipage}}}%
    \put(0.69632983,0.44380553){\color[rgb]{0,0,0}\makebox(0,0)[lt]{\lineheight{1.25}\smash{\begin{tabular}[t]{l}\fontsize{9pt}{1em} $f^{-1,[1]}_{(\sigma^2(\underline{s}),*)}$\end{tabular}}}}%
    \put(0.2267857,0.4325813){\color[rgb]{0,0,0}\makebox(0,0)[lt]{\lineheight{1.25}\smash{\begin{tabular}[t]{l}\fontsize{9pt}{1em} $f^{-1,[1]}_{(\underline{s},*)}$\end{tabular}}}}%
    \put(0.22235448,0.32219301){\color[rgb]{0,0,0}\makebox(0,0)[lt]{\lineheight{1.25}\smash{\begin{tabular}[t]{l}\fontsize{9pt}{1em} $f^{-1,[2]}_{(\underline{s},*)}$\end{tabular}}}}%
    \put(0.22613423,0.24187309){\color[rgb]{0,0,0}\makebox(0,0)[lt]{\lineheight{1.25}\smash{\begin{tabular}[t]{l}\fontsize{9pt}{1em} $f^{-1,[3]}_{(\underline{s},*)}$\end{tabular}}}}%
    \put(0.45453303,0.43102334){\color[rgb]{0,0,0}\makebox(0,0)[lt]{\lineheight{1.25}\smash{\begin{tabular}[t]{l}\fontsize{9pt}{1em} $f^{-1,[1]}_{(\sigma(\underline{s}),*)}$\end{tabular}}}}%
    \put(0.44739388,0.31767172){\color[rgb]{0,0,0}\makebox(0,0)[lt]{\lineheight{1.25}\smash{\begin{tabular}[t]{l}\fontsize{9pt}{1em} $f^{-1,[2]}_{(\sigma(\underline{s}),*)}$\end{tabular}}}}%
  \end{picture}%
 \endgroup 
	\caption{A schematic of the functions and curves involved in the definition of the maps $\left\{\phi_n\right\}_{n\in \N}$.}
	\label{figure_defphi_n}
\end{figure}

Since all the functions involved in the definition of $\phi_n$ are continuous, continuity of $\phi_n$ will follow. Moreover, we use Theorem~\ref{thm_orbifolds}, that is, orbifold expansion of $f$ on a neighbourhood of $J(f)$, to show that the functions $\phi_n$ converge to a limit function $\phi$ in Lemma~\ref{lem_UHSC}. Finally, using that since $J(g)$ is a Cantor bouquet and $g$ is of disjoint type, all but some of the endpoints of the hairs of $J(g)$ are escaping, Observation \ref{obs_inverse_CB}, we show surjectivity of $\phi$.

\begin{defn}[Functions $\phi_n$]\label{def_phin} Following \ref{dis_setting_semi}, for each $n\geq 0$, we define the function $\phi_n \colon J(g)_{\pm} \to J(f)$ as
$$\phi_0(x)\defeq \theta(\pi(x)) \qquad \text{and}\qquad \phi_{n+1}(x)\defeq f^{-1, [n]}_{\addr(x)}(\phi_{n}(\tilde{g}(x))).$$
\end{defn}

We claim that these functions are well-defined. Indeed, the function $\phi_0$ is well-defined since $\pi(J(g)_\pm)\subset J(g)$. For each $n\geq 1$, let $x\in J(g)_\pm$ and suppose that $\addr(x)=(\s, \ast)$. Then, expanding definitions and using Observation \ref{obs_chain_inverse}, 
\begin{equation}\label{eq_expdef}
\begin{split}
\phi_{n}(x)&=\left(f^{-1, [n]}_{(\s, \ast)}\circ f^{-1, [n-1]}_{(\sigma(\s), \ast)} \circ \cdots \circ f^{-1, [1]}_{(\sigma^{n-1}(\s), \ast)} \circ \phi_{0} \circ \tilde{g}^n\right)(x)\\
&= f^{-n}_{(\s, \ast)} (\phi_0(\tilde{g}^n(x))).
\end{split}
\end{equation}
Since the equalities in \eqref{eq_expdef} only depend on $\addr(x)$ but not on the point $x$ itself, the action of $\phi_{n}$ can be expressed in terms of the sets in \eqref{def_Jsmodel} as
\begin{equation}\label{eq_defphikabbr}
\phi_n\vert_{J_{(\s, \ast)}} \equiv f^{-n}_{(\s, \ast)} \circ \phi_0 \circ \tilde{g}^n \vert_{J_{(\s, \ast)}}.
\end{equation}
Thus, since each $x \in J(g)_\pm$ belongs to a unique set $J_{(\s, \ast)}$ for $(\s, \ast)=\addr(x)$ as $g$ is of disjoint type, see Observation \ref{obs_addrz}, $\phi_n$ is a well-defined function for all $n\geq 0$. In particular, using \eqref{eq_initial_semi} and Observation \ref{obs_chain_inverse}, we have that for each $(\s, \ast)\in \Addr(g)_\pm$,
\begin{equation}\label{eq_phin_gamman}
\begin{split}
\phi_n(J_{(\s,\ast)})&=f^{-n}_{(\s, \ast)}(\phi_0(\tilde{g}^n(J_{(\s, \ast)})))=f^{-n}_{(\s, \ast)}(\phi_0(J_{(\sigma^n(\s), \ast)} ))=f^{-n}_{(\s, \ast)}(\gamma^0_{(\sigma^{n}(\s), \ast)}) \\
&=\gamma^n_{(\s,\ast)}.
\end{split}
\end{equation}

Moreover, by construction, for all $n\geq 0$, 
\begin{equation} \label{commute1}
\phi_{n}\circ \tilde{g}= f \circ \phi_{n+1}.
\end{equation}

\begin{prop}[Continuity of $\phi_n$] \label{new_prop_contphi_k} For each $n\geq 0$, the function $\phi_n \colon J(g)_{\pm} \rightarrow J(f)$ is continuous.
\end{prop}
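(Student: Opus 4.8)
The plan is to prove continuity of each $\phi_n$ separately, by producing around every point of $J(g)_\pm$ an open neighbourhood on which $\phi_n$ agrees with a composition of maps already known to be continuous; since continuity is a local property, this suffices. For $n=0$ no localisation is needed: $\phi_0=\theta\circ\pi$ is the composition of the continuous projection $\pi$ of Proposition~\ref{prop_contmodel} with the continuous map $\theta$ of Theorem~\ref{thm_CB}.

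Now fix $n\geq 1$ and $x_0\in J(g)_\pm$, and put $(\s,\ast)\defeq\addr(x_0)$. Theorem~\ref{thm_inverse_hand}\ref{item1} provides an open interval of signed addresses $\Upsilon^n(\s,\ast)\ni(\s,\ast)$ with $\tau_n(\ul{\alpha},\star)\subseteq\tau_n(\s,\ast)$ for every $(\ul{\alpha},\star)\in\Upsilon^n(\s,\ast)$; by Observation~\ref{obs_chain_inverse} this forces $f^n(\tau_n(\ul{\alpha},\star))\subseteq f^n(\tau_n(\s,\ast))$ and $f^{-n}_{(\s,\ast)}\equiv f^{-n}_{(\ul{\alpha},\star)}$ on $f^n(\tau_n(\ul{\alpha},\star))$ for all such $(\ul{\alpha},\star)$. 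Consider
\[
U\defeq\bigcup_{(\ul{\alpha},\star)\in\Upsilon^n(\s,\ast)}J_{(\ul{\alpha},\star)}\subseteq J(g)_\pm,
\]
which contains $x_0$ since $x_0\in J_{(\s,\ast)}$ and $(\s,\ast)\in\Upsilon^n(\s,\ast)$. The first step is to show that $U$ is open. Transporting to the double brush through the homeomorphism $\tilde{\psi}\colon J(g)_\pm\to B_\pm$ of~\ref{dis_topology_model} and writing $\mathcal{C}(\ul{\alpha},\star)=(\Irr(\ul{\alpha}),\star)$ as in Proposition~\ref{prop_mapC}, one checks directly that $\tilde{\psi}(U)=B_\pm\cap([0,\infty)\times\mathcal{C}(\Upsilon^n(\s,\ast)))$. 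Under the order-preserving identification of $\Addr(f)_\pm$ with $\Addr(g)_\pm$ from Theorem~\ref{thm_CB}\ref{item:addr}, the set $\Upsilon^n(\s,\ast)$ is an open interval of $\Addr(g)_\pm$, so Proposition~\ref{prop_mapC} makes $\mathcal{C}(\Upsilon^n(\s,\ast))$ open in $((\R\setminus\Q)\times\{-,+\},\tau_I)$; hence $[0,\infty)\times\mathcal{C}(\Upsilon^n(\s,\ast))$ is open in $(\M,\tau_\M)$, so $\tilde{\psi}(U)$ is open in $B_\pm$, and therefore $U$ is open in $J(g)_\pm$.

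The second step is the identity $\phi_n\vert_U\equiv f^{-n}_{(\s,\ast)}\circ\phi_0\circ\tilde{g}^n\vert_U$. For $x\in U$ with $\addr(x)=(\ul{\alpha},\star)\in\Upsilon^n(\s,\ast)$, equation~\eqref{eq_defphikabbr} gives $\phi_n(x)=f^{-n}_{(\ul{\alpha},\star)}(\phi_0(\tilde{g}^n(x)))$; the well-definedness of $\phi_n$ established in the discussion preceding this proposition guarantees that $\phi_0(\tilde{g}^n(x))$ lies in the domain $f^n(\tau_n(\ul{\alpha},\star))\subseteq f^n(\tau_n(\s,\ast))$ of $f^{-n}_{(\ul{\alpha},\star)}$, where $f^{-n}_{(\ul{\alpha},\star)}$ and $f^{-n}_{(\s,\ast)}$ agree by Observation~\ref{obs_chain_inverse}; this yields the identity. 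On $U$, then, $\phi_n$ is the composition of $\tilde{g}^n\vert_U$, continuous because $\tilde{g}$ is (Proposition~\ref{prop_contmodel}), with $\phi_0$, continuous by the case $n=0$, and finally with the inverse branch $f^{-n}_{(\s,\ast)}$ restricted to its domain, continuous by its construction in Theorem~\ref{thm_inverse_hand} and Observation~\ref{obs_chain_inverse}. Thus $\phi_n$ is continuous on $U$, and since $x_0$ was arbitrary, $\phi_n$ is continuous on $J(g)_\pm$.

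I expect the main obstacle to be the first step: turning openness of an interval of addresses in the abstract cyclic-order topology $\tau_A$ on $\Addr(g)_\pm$ into openness of the associated bundle of signed hairs $U$ inside the model space. This is precisely what the topology $\tau_J$ of Section~\ref{sec_model} was built to allow, via the identifications $\Addr(g)_\pm\xrightarrow{\ \mathcal{C}\ }(\R\setminus\Q)\times\{-,+\}\hookrightarrow\M\supset B_\pm\xleftarrow{\ \tilde{\psi}\ }J(g)_\pm$, and the explicit description of $\tilde{\psi}(U)$ above together with Proposition~\ref{prop_mapC} is what carries the openness across. A minor point to handle with care is that the inverse branches $f^{-n}_{(\s,\ast)}$ remain continuous even though the neighbourhoods $\tau_n(\s,\ast)$ are not assumed open, which is part of what Theorem~\ref{thm_inverse_hand} delivers.
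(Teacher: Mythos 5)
Your proof is correct and follows essentially the same route as the paper: localise to the union of the sets $J_{(\ul{\alpha},\star)}$ over the interval $\Upsilon^n(\s,\ast)$, use \eqref{eq_defphikabbr} and Observation~\ref{obs_chain_inverse} to write $\phi_n$ there as $f^{-n}_{(\s,\ast)}\circ\phi_0\circ\tilde{g}^n$, and transport openness through $\mathcal{C}$ and $\tilde{\psi}$. The only (harmless) variation is that you show the whole set $U$ is open via the identification $\tilde{\psi}(U)=B_\pm\cap([0,\infty)\times\mathcal{C}(\Upsilon^n(\s,\ast)))$, whereas the paper merely exhibits an open neighbourhood of the given point inside it.
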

\begin{proof}
The function $\phi_0$ is continuous because it is the composition of two continuous functions, see Theorem~\ref{thm_CB} and Proposition~\ref{prop_contmodel}. Fix any $n\geq 1$, fix an arbitrary $x\in J(g)_\pm$, let $\addr(x)\eqdef (\s, \ast)$ and let $\Upsilon^n(\s, \ast)$ be the interval in $\Addr(f)_\pm$ provided by Theorem~\ref{thm_inverse_hand}. By Theorem~\ref{thm_CB}\ref{item:addr}, $\theta$ establishes a one-to-one and order-preserving correspondence between $\Addr(f)$ and $\Addr(g)$. Hence, up to this correspondence, the topological spaces $\Addr(f)_\pm$ and $\Addr(g)_\pm$ are the same, and so, $\Upsilon^n(\s, \ast)$ is an open interval in $(\Addr(g)_\pm, \tau_A)$. Let us consider the subset of $J(g)_\pm$
$$ A\defeq \bigcup_{(\ultau, \star)\in \Upsilon^n(\s, \ast)} J_{(\ultau,\star)}.$$ Then, by Observation \ref{obs_chain_inverse} and \eqref{eq_defphikabbr},
\begin{equation} \label{eq_pruebacontphi_n}
\phi_n\vert_{ A} \equiv f^{-n}_{(\s, \ast)} \circ \phi_0 \circ \tilde{g}^n\vert_{ A}.
\end{equation}
It follows that $\phi_n\vert_{A}$ is continuous as it is a composition of continuous functions: we have just shown that $\phi_0$ is continuous, and $\tilde{g}$ is continuous by Proposition~\ref{prop_contmodel}. Moreover, by Observation \ref{obs_chain_inverse}, it holds that $\phi_0(\tilde{g}^n(A)) \subset f^n(\tau_n(\s, \ast))$, and thus, the restriction of $f^{-n}_{(\s, \ast)}$ to $\phi_0 (\tilde{g}^n (A))$ is well-defined and continuous.

We are only left to show that $A$ contains an open neighbourhood of $x$. Recall that we defined in Proposition~\ref{prop_mapC} an open map $\mathcal{C}:(\Addr(g)_\pm, \tau_A) \rightarrow (\R \setminus \Q\times \{-,+ \}, \tau_I)$. 
In particular, $\mathcal{C}(\Upsilon^n(\s, \ast))$ is an open interval in $(\R\setminus \Q \times \{ -,+ \},\tau_I)$. Let $\tilde{\psi}\colon J(g)_{\pm} \to B_\pm$ be the bijection from \ref{dis_topology_model}.
In particular, $\tilde{\psi}(x)=(t, \mathcal{C}(\s, \ast))$ for some $t>0$. Then, $U \defeq ((t_1, t_2) \times \mathcal{C}(\Upsilon^n(\s, \ast))) \cap B_\pm$ is an open neighbourhood of $\tilde{\psi}(x)$ in $B_\pm$ for any choice of $t_1,t_2\in \R^+$ such that $t_1 <t<t_2$. Consequently, see \ref{dis_topology_model}, $\tilde{\psi}^{-1}(U)$ is an open neighbourhood of $x$ that lies in $A$.
\end{proof}

\subsection*{Convergence to the semiconjugacy}

For the hyperbolic orbifold $\Or\supset J(f)$ fixed in \ref{dis_setting_semi}, let $d_\Or$ be the distance function defined from its orbifold metric; see \ref{orb_metric}. We shall now see that for any given point $x\in J(g)_\pm$, $d_\Or(\phi_{n+1}(x),\phi_n(x))\rightarrow 0$ as $n\rightarrow \infty$.
\begin{lemma}[The functions $\phi_n$ form a Cauchy sequence] \label{lem_UHSC} There exists constants $\mu, \Lambda>1$ such that for each $x \in J(g)_\pm$, 
\begin{enumerate}[label=(\Alph*)]
\item \label{itemA_cauchy} $d_\Or(\pi(x),\phi_{0}(x))<\mu$,
\item \label{itemB_cauchy} $d_\Or(\phi_{n+1}(x),\phi_n(x))\leq \frac{\mu}{\Lambda^{n}}$ for every $n \geq 0$.
\end{enumerate}
\end{lemma}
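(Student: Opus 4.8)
The plan is to prove \ref{itemA_cauchy} first and then bootstrap \ref{itemB_cauchy} using orbifold expansion together with the post-$0$-homotopy machinery. For \ref{itemA_cauchy}, recall that by Theorem \ref{thm_CB}\ref{item:annulus} there is a constant $M=M(L)$ so that $\theta(z)\in\overline{A(M^{-1}|z|,M|z|)}$ for every $z\in J(g)$. Since $\phi_0(x)=\theta(\pi(x))$, the points $\pi(x)$ and $\phi_0(x)$ both lie in an annulus of the form $\overline{A(M^{-1}t,Mt)}$ with $t=|\pi(x)|$ (after noting $\pi(x)\in J(g)\subset S$ and $\phi_0(x)\in\theta(J(g))\subset S$ by Theorem \ref{thm_CB}\ref{item:inclusions}, so both are in the orbifold surface). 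Applying Theorem \ref{thm_orbifolds}\ref{lem_annulus} with the fixed constant $K=M^2$ then gives a uniform $R>0$ with $d_\Or(\pi(x),\phi_0(x))\le R$; we set $\mu\ge R$.

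For \ref{itemB_cauchy}, fix $x\in J(g)_\pm$ with $\addr(x)=(\s,\ast)$. Using \eqref{commute1} twice, one has $f\circ\phi_{n+1}=\phi_n\circ\tilde g$ and $f\circ\phi_{n+2}=\phi_{n+1}\circ\tilde g$, which by induction reduces the estimate on $d_\Or(\phi_{n+1}(x),\phi_n(x))$ to an estimate on $d_\Or(\phi_1(y),\phi_0(y))$ for $y=\tilde g^{n}(x)$, pulled back through $n$ inverse branches of $f$. More precisely, by \eqref{eq_defphikabbr} and Observation \ref{obs_chain_inverse}, $\phi_{n+1}(x)$ and $\phi_n(x)$ are the $f^{-n}_{(\s,\ast)}$-images of $\phi_1(\tilde g^n(x))$ and $\phi_0(\tilde g^n(x))$ respectively, both points lying in $\gamma^1_{(\sigma^n(\s),\ast)}$ and $\gamma^0_{(\sigma^n(\s),\ast)}\subset\gamma^1_{(\sigma^n(\s),\ast)}$. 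So it suffices to: (i) bound, uniformly over all addresses, the orbifold length of a curve joining $\phi_0(\tilde g^n(x))$ to $\phi_1(\tilde g^n(x))$ inside the canonical tail $\gamma^1_{(\sigma^n(\s),\ast)}$ --- or rather a curve post-$0$-homotopic to the relevant ray piece, to sidestep non-rectifiability; and (ii) pull this curve back by $f^{-n}_{(\s,\ast)}$ using Corollary \ref{cor_uniform}, contracting its length by $\Lambda^{-n}$, and use the Post-homotopy lifting property (Proposition \ref{cor_homot}) to guarantee the pulled-back curve still joins $\phi_{n+1}(x)$ to $\phi_n(x)$.

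For step (i): the relevant piece of ray tail $\xi\subset\gamma^1_{(\ultau,\circledast)}$ (where $(\ultau,\circledast)=(\sigma^n(\s),\ast)$) between the two endpoints lies in the bounded region where $f$ acts as a critical-point-preimage extension of $\gamma^0$, hence inside a fixed simply connected domain $U\Subset S$ with $P(f)\cap\overline U\subset J(f)$ and $\#(P(f)\cap\overline U)<\infty$ --- this is exactly the setup of Theorem \ref{cor_homot2}, which furnishes $\delta\in[\xi]_{_0}$ with $\ell_\Or(\delta)\le L_U$ depending only on $U$. Concretely, $U$ should be taken to contain $\D_K$ (the bounded set where all the extension-at-critical-point activity happens, cf.\ \eqref{eq_dk2}), and one checks that $\gamma^1_{(\ultau,\circledast)}\setminus\gamma^0_{(\ultau,\circledast)}$ — the ``new'' part of the first-generation tail — stays inside such a $U$ because it maps under $f$ into $\gamma^0_{(\sigma(\ultau),\circledast)}$ which, past a bounded initial segment, escapes; the separation hypothesis on $P_J$ is what keeps the finitely many postsingular points in $\overline U$ genuinely inside $J(f)$. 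Set $\mu\ge L_U$ as well. Then step (ii): apply Corollary \ref{cor_uniform} with $\gamma_0=\delta$ (which has $\ell_\Or(\delta)\le L_U\le\mu$) and $k=n$, obtaining the pulled-back curve $\delta_n\subset f^{-n}_{(\s,\ast)}(\delta)$ with $\ell_\Or(\delta_n)\le\mu/\Lambda^n$; Proposition \ref{cor_homot} (applied with $C$ the domain $f^{-1}(C)\subset C$, $\AV(f)\cap C=\emptyset$ from Theorem \ref{thm_orbifolds}\ref{item_a_assocorb}, here one uses $S=\C\setminus\overline U$ which satisfies $f^{-1}(S)\subset S$) ensures $\delta_n$ has the same endpoints as the pullback of $\xi$ itself, namely $\phi_n(x)$ and $\phi_{n+1}(x)$. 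Hence $d_\Or(\phi_{n+1}(x),\phi_n(x))\le\ell_\Or(\delta_n)\le\mu/\Lambda^n$, as required. Taking $\mu=\max\{R,L_U\}$ handles both parts.

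The main obstacle I expect is step (i): making precise that the ``new'' portions of the first-generation canonical tails $\gamma^1_{(\ultau,\circledast)}$ all sit inside one fixed admissible domain $U$, uniformly in the address $(\ultau,\circledast)$, and verifying the hypotheses of Theorem \ref{cor_homot2} there (simple connectivity, locally connected boundary, the postsingular points in $\overline U$ being in $J(f)$, and the existence of a ray tail landing at each of them). Everything after that --- the reduction via \eqref{commute1}, the pullback contraction, the homotopy-lifting bookkeeping --- is essentially formal, modulo the care needed to check that the curves to which Corollary \ref{cor_uniform} and Proposition \ref{cor_homot} are applied lie in the correct forward-invariant domain $C=S$ and meet $f^{-n}(P(f))$ in only finitely many points, which holds because $\delta$ meets $P(f)$ finitely often and $f$ has bounded criticality on $J(f)$.
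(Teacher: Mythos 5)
Your overall architecture for \ref{itemB_cauchy} (reduce to the generation-one discrepancy between $\phi_0$ and $\phi_1$, replace the possibly non-rectifiable ray piece by a post-$0$-homotopic curve of bounded length, then pull back with Proposition \ref{cor_homot} and contract with Corollary \ref{cor_uniform}) is the same as the paper's, but your step (i) --- the source of the uniform bound $\mu_2$ --- has a genuine gap. You claim the relevant piece of $\gamma^1_{(\ultau,\circledast)}$ between $\phi_0$ and $\phi_1$ lies in one fixed simply connected domain $U\Subset S$ containing $\D_K$, and apply Theorem \ref{cor_homot2} there. First, such a $U$ cannot satisfy the hypotheses of Theorem \ref{cor_homot2}: by \eqref{eq_dk2} it would contain the excluded set $\overline{U}$ of Theorem \ref{thm_orbifolds}\ref{item_a_assocorb}, so it is not even contained in $S$; it may contain $P(f)\cap F(f)$, violating $P(f)\cap\overline{U}\subset J(f)$; and it contains non-escaping postsingular points of $J(f)$, for which the existence of landing rays is not available at this stage (it is part of what the main theorem proves). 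Second, the boundedness claim itself is unjustified and geometrically off: the ``splitting activity'' happens at \emph{preimages} of critical values, i.e.\ near critical points, which are not confined to $\D_K$ and are typically unbounded (for $\cosh$, at $k\pi i$ for all $k$), so the generation-one discrepancy pieces are spread over the plane; your parenthetical justification only shows their $f$-\emph{image} is eventually controlled, which is exactly the observation the paper exploits. The paper's mechanism is to apply $f$ once more: when $\phi_0(x)\neq\phi_1(x)$, Theorem \ref{thm_CB}\ref{item:commute},\ref{item:boundedC} place the curve joining $f(\theta(z))$ and $\theta(g(z))$ inside the fixed bounded set $C\cap f^{-1}(\C\setminus\D_L)$, where $C\cap P(f)\subset I(f)$ by \eqref{eq_dk2}; Theorem \ref{cor_homot2} is then applied to the finitely many tract pieces meeting $C$ (simply connected, analytic boundary, all postsingular points there escaping and hence on ray tails by criniferousness), and the bounded representative is pulled back to the $\gamma^1$ level and then by $f^{-n}_{(\s,\ast)}$. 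Without this extra push-forward (or an explicit distortion argument confining all discrepancy pieces to one admissible domain), your uniform constant $\mu_2$ is not established. Your appeal to $f^{-1}(S)\subset S$ when invoking Proposition \ref{cor_homot} is also not something Theorem \ref{thm_orbifolds} gives you.

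A smaller issue concerns \ref{itemA_cauchy}: applying Theorem \ref{thm_orbifolds}\ref{lem_annulus} in one shot with $K=M^2$ requires $\overline{A(t,Kt)}\subset A(t/K,tK^2)\subset S$ for $t=M^{-1}|\pi(x)|$, and the enlarged annulus $A(M^{-3}|z|,M^{3}|z|)$ can reach well inside $\D_K$ (hence meet $\overline{U}$) when $|z|$ is only moderately larger than $L$ and $M$ is large; membership of the two points in $S$ is not enough. The paper instead covers $\overline{A(M^{-1}|z|,M|z|)}$ by a bounded number $N$ of annuli of small modulus and applies the lemma to each, yielding $\mu_1=N\tilde R$. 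This part of your argument is repairable (e.g.\ treat large $|z|$ as you do and handle the remaining compact range by continuity), but as written the containment hypothesis is unchecked.
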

\begin{proof}
Fix $x\in J(g)_\pm$ and let $z\defeq \pi(x)$. In particular, $\phi_0(x)=\theta(z)$. By \eqref{eq_dk2} and Theorem~\ref{thm_CB}\ref{item:inclusions}\ref{item:annulus},
\begin{equation}\label{eq_thetaJk}
\theta(J(g))\cup J(g) \subset \C \setminus \D_{K} \subset \C \setminus \D_{K/2} \subset \Or \quad \text{ and } \quad \theta(z) \in \overline{A(M^{-1} \vert z\vert, M\vert z\vert)},
\end{equation}
for some $M>1$ that does not depend on the point $z$.

By Theorem~\ref{thm_orbifolds}\ref{lem_annulus}, there exists $\tilde{R}>1$ so that if $\overline{A(r, 2r)}\subset A(r/2,4 r) \subset \Or$, then the $\Or$-distance between any two points in $\overline{A(r, 2r)}$ is less than $\tilde{R}$. 
We want to combine this result with \eqref{eq_thetaJk} to get an upper bound for $d_\Or(z,\theta(z))$ by expressing the annulus in \eqref{eq_thetaJk} as a finite union of annuli of the form $A(r, 2r)$ for some $r>0$. More specifically, let $N$ be the smallest natural number for which $2^N\geq M^{2}$. That is, $N\defeq \left \lceil{\frac{2\log M}{\log 2}}\right \rceil $, and let $r\defeq \max\{K, M^{-1}\vert z \vert\}$. Then, by \eqref{eq_thetaJk},
\begin{equation}\label{eq_annuli}
z,\: \theta(z) \in \bigcup_{i=1}^{N} \overline{A\left(2^{i-1}r, 2^{i}r \right)} \subset \bigcup_{i=1}^{N} A\left(2^{i-2}r, 2^{i+1}r \right) \subset\Or.
\end{equation}
Thus, since the constant $N$ does not depend on the point $z\in J(g)$, we have that for all $x\in J(g)_\pm$, $d_\Or(\phi_0(x),\pi(x))\leq N \cdot \tilde{R} \eqdef\mu_1$, and item \ref{itemA_cauchy} is proved. In particular, $\mu_1>1$.

We now prove item \ref{itemB_cauchy}. Let $\addr(x)=(\s, \ast)$ and fix any $n\in \N$. Recall that by \eqref{eq_phin_gamman}, since by Theorem~\ref{thm_signed}\ref{item:tails_bijection}, $\gamma^n_{(\s, \ast)} \subseteq \gamma^{n+1}_{(\s, \ast)}$,
$$\phi_n(x), \phi_{n+1}(x) \in \gamma^{n+1}_{(\s, \ast)}.$$
Thus, the $\Or$-length of the piece of $\gamma^{n+1}_{(\s, \ast)}$ that joins $\phi_n(x)$ and $\phi_{n+1}(x)$ provides an upper bound for the $\Or$-distance between these two points. Let $\delta(n)$ be that curve. Then, using \eqref{eq_defphikabbr} and \eqref{commute1}, we have $$f^n(\phi_n(x))=\phi_0(\tilde{g}^n(x)), \qquad f^n(\phi_{n+1}(x))=\phi_1(\tilde{g}^n(x)),$$ and
$\delta(1)\defeq f^n(\delta(n)) \subset \gamma^1_{(\sigma^n(\s), \ast)}$ is a curve with endpoints $\phi_0(\tilde{g}^n(x))$ and $\phi_1(\tilde{g}^n(x))$ and such that $f^{-n}_{(\s, \ast)}(\delta(1))=\delta(n)$; see Figure \ref{figure_defphi_n}. Since $f \in\B$ and is strongly postcritically separated, by Corollary~\ref{cor_uniform}, any upper bound for $\ell_\Or(\delta(1))$ is also an upper bound for $\ell_\Or(\delta(n))$. In particular, if we find a constant $C$ that bounds the $\Or$-length of the sub-curve in $\gamma^1_{\addr(y)}$ between $\phi_0(y)$ and $\phi_1(y)$ for all $y\in J(g)_\pm$, being $C$ independent of the point $y$, then \ref{itemB_cauchy} would follow. However, those sub-curves are pieces of ray tails, and in principle might not be rectifiable. Therefore and instead, we find curves in their post-$0$-homotopy class (see Definition \ref{def_post0}) with bounded orbifold length. More specifically:

\begin{Claim}
There exists a constant $\mu_2>0$ such that for each $x\in J(g)_\pm$ and $n\geq 0$, if $\delta(1)$ is the piece of $\gamma^1_{\addr(x)}$ joining $\phi_0(x)$ and $\phi_1(x)$, then there exists $\tilde{\delta}(1) \in [\delta(1)]_{_0}$ with $\ell_{\Or}(\tilde{\delta}(1))\leq \mu_2.$
\end{Claim}

\begin{subproof}
Fix an arbitrary $x\in J(g)_\pm$ and let $z\defeq \pi(x)$. If $\phi_{0}(x)=\phi_{1}(x)$, the claim holds trivially. Otherwise, since $f\vert_{\gamma_{\addr(x)}^1}$ is injective,
\begin{equation}
\phi_{0}(x) \neq \phi_{1}(x) \iff f(\phi_0(x))=f(\theta(z))\neq \theta(g(z))=f(\phi_1(x)).
\end{equation}
But by Theorem~\ref{thm_CB}\ref{item:boundedC}, we have that $f(\theta(z))\neq \theta(g(z))$ only when the piece of ray $\xi$ joining $f(\phi_1(x))$ and $(\phi_0(x))$ belongs to a compact set $C\subset \C\setminus \D_L \cap f^{-1}(\C\setminus \D_L) \subset \Or$.

Using Theorem~\ref{cor_homot2}, we are aiming to find curves post-$0$-homotopic to the pieces of dynamic rays totally contained in $C$ with uniformly bounded $\Or$-length. Observe that $C \cap P(f) \subset I(f)$, since by the choice of the constant $K$ in \eqref{eq_dk2}, $(P(f)\setminus I(f)) \Subset \D_K$. Moreover, by discreteness of $P_J$, $C \cap P_J$ is a finite set, and since, by Theorem~\ref{thm_CB}, $f$ is criniferous, there exists at least one ray tail connecting each point of $C \cap P_J$ to infinity; see \cite[Theorem 1.4]{mio_newCB}. Since $S(f)\subset \D_L$, the connected components of $f^{-1}(\C\setminus \D_L)$ form a collection $\T$ of tracts. Then, see \cite[Lemma~2.1]{lasse_dreadlocks}, at most finitely many pieces of the tracts in $\T$ intersect $C$, say $\{T_1, \ldots, T_m\}$. Each of these pieces is simply-connected and its boundary is an analytic curve, and hence locally connected. Thus, we can apply Theorem~\ref{cor_homot2} to the closure of each $T_i$ in $C$, to obtain a constant $L_i$ such that for any (connected) piece of ray tail $\xi \subset \overline{T_i}\cap C$, there exists $\delta\in [\xi]_{_0}$ with $\ell_{\Or}(\delta)\leq L_i.$ The claim now follows letting $\mu_2\defeq \max_{1\leq i\leq m}L_i$ and using Corollary~\ref{cor_uniform}. 
\end{subproof}	

The claim implies that for each $x\in J(g)_\pm$ and $n\geq 0$, if $\delta(1)$ is the piece of $\gamma^1_{(\s, \ast)}$ joining $\phi_0(\tilde{g}^n(x))$ and $\phi_1(\tilde{g}^n(x))$, then there exists $\tilde{\delta}(1) \in [\delta(1)]_{_0}$ with $\ell_{\Or}(\tilde{\delta}(1))\leq \mu_2.$ Hence, by Proposition~\ref{cor_homot}, if $\delta(n)\subset f^{-n}_{(\s, \ast)}(\delta(1))$ is the curve joining $\phi_0(x)$ and $\phi_1(x)$, then there exists a unique curve $\tilde{\delta}(n) \subseteq f^{-n}_{(\s, \ast)}(\tilde{\delta}(1))$ satisfying $\tilde{\delta}(n) \in [\delta(n)]_{_0}$. In particular, $\tilde{\delta}(n)$ has endpoints $\phi_0(x)$ and $\phi_1(x)$, and moreover, by Corollary~\ref{cor_uniform}, there exists a constant $\Lambda>1$, that does not depend on $x$, such that
$$d_\Or(\phi_{n+1}(x),\phi_n(x))\leq \ell_\Or(\tilde{\delta}(n)) \leq \frac{\ell_\Or(\tilde{\delta}(1))}{\Lambda^{n}} \leq \frac{\mu_2}{\Lambda^{n}}.$$ 
Letting $\mu\defeq \max\{\mu_1, \mu_2\}$, the lemma follows.
\end{proof}

\noindent Finally, we state and prove a more detailed version of Theorem~\ref{thm_main_intro}. Recall from Definition and Theorem \ref{thm_signed} that for each $(\s, \ast)\in \Addr(f)_\pm$ we can define a canonical ray $\Gamma(\s, \ast)$.
\begin{thm}\label{thm_main} Let $f\in \CB$ be strongly postcritically separated, let $J(g)_\pm$ be a model space for $f$ and let $\tilde{g}$ be its associated model function. Then, there exists a continuous surjective function
\begin{equation*}
\phi \colon J(g)_{\pm} \rightarrow J(f) \qquad \text{ so that } \qquad f\circ\phi = \phi\circ \tilde{g},
\end{equation*}
$\phi(I(g)_{\pm})=I(f)$ and there is $K\in \N$ such that for every $z\in I(f)$, $\# \phi^{-1}(z)< K$. 

Moreover, for each $(\s, \ast) \in \Addr(g)_\pm$, the restriction map $\phi\colon J_{(\s, \ast)}\rightarrow\overline{\Gamma(\s, \ast)}$ is a bijection, and so $\overline{\Gamma(\s, \ast)}$ is a canonical ray together with its endpoint.
\end{thm}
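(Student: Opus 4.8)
The core of the argument is to show that the pointwise limit $\phi \defeq \lim_{n\to\infty}\phi_n$ exists, is continuous, has the stated semiconjugacy property, and is surjective with the finer fibre structure described. First I would invoke Lemma~\ref{lem_UHSC}: since $d_\Or(\phi_{n+1}(x),\phi_n(x))\leq \mu/\Lambda^n$ with $\Lambda>1$ uniformly in $x$, the sequence $\{\phi_n(x)\}_n$ is Cauchy in the orbifold metric $d_\Or$ on a neighbourhood of $J(f)$, hence converges to a point $\phi(x)\in \overline{J(f)}=J(f)$ (here I would note that completeness of the relevant piece of orbifold metric space, or just the fact that the tails $\gamma^n_{(\s,\ast)}$ increase to the closed curve $\overline{\Gamma(\s,\ast)}$ by Theorem~\ref{thm_signed}, pins down the limit). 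In fact, by \eqref{eq_phin_gamman} we have $\phi_n(x)\in\gamma^n_{\addr(x)}\subseteq \overline{\Gamma(\addr(x))}$ for all $n$, so $\phi(x)\in\overline{\Gamma(\addr(x))}$ automatically. Continuity of $\phi$ then follows from uniform convergence: the bound in Lemma~\ref{lem_UHSC}\ref{itemB_cauchy} is independent of $x$, so $\phi_n\to\phi$ uniformly (with respect to $d_\Or$), and a uniform limit of the continuous maps $\phi_n$ (Proposition~\ref{new_prop_contphi_k}) is continuous, provided I check that $d_\Or$ induces the subspace topology on $J(f)$, which holds since $J(f)\Subset S$ and $\Or$ is a genuine conformal metric there. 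The semiconjugacy identity $f\circ\phi=\phi\circ\tilde g$ is obtained by passing to the limit in \eqref{commute1}, $\phi_n\circ\tilde g=f\circ\phi_{n+1}$, using continuity of $f$ and of $\tilde g$ (Proposition~\ref{prop_contmodel}).

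**Next, the restriction claim.** Fix $(\s,\ast)\in\Addr(g)_\pm$. By \eqref{eq_phin_gamman}, $\phi_n(J_{(\s,\ast)})=\gamma^n_{(\s,\ast)}$, and since the $\gamma^n_{(\s,\ast)}$ are nested with union $\Gamma(\s,\ast)$, the image $\phi(J_{(\s,\ast)})$ lands inside $\overline{\Gamma(\s,\ast)}$; I would argue it is all of $\overline{\Gamma(\s,\ast)}$ by a compactness/limit argument on the sets $J_{(\s,\ast)}$ together with Observation~\ref{obs_inverse_CB} (every point of $\Gamma(\s,\ast)$, and its endpoint, is the $f^{-n}_{(\s,\ast)}$-image of a point on $\gamma^0_{(\sigma^n(\s),\ast)}$, which is hit by $\phi$). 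Injectivity of $\phi|_{J_{(\s,\ast)}}$ is the key new point: I would use the fact that $\theta|_X$ is a homeomorphism (Theorem~\ref{thm_CB}\ref{item:homeomX}) and that $g^n$ is injective on $\beta^n_\s$ (Observation~\ref{obs_inverse_CB}) together with the bijectivity statement at the end of Observation~\ref{obs_chain_inverse}, $f^{-n}_{(\s,\ast)}\colon \gamma^0_{(\sigma^n(\s),\ast)}\to\gamma^n_{(\s,\ast)}$. Concretely, within a fixed $J_{(\s,\ast)}$ the map $\phi$ is (by \eqref{eq_defphikabbr} and taking $n\to\infty$, using that points of $J_\s$ either lie in $\beta^n_\s$ for some $n$ or are non-escaping endpoints) a "limit" of the injective maps $f^{-n}_{(\s,\ast)}\circ\theta\circ g^n$ on $\pi(J_{(\s,\ast)})$; two distinct points $z,w\in J_\s$ must at some finite level $n$ be separated by $\theta\circ g^n$ — because $\theta|_X$ is a homeomorphism and $g^n$ eventually pushes them into $X$ — and then $f^{-n}_{(\s,\ast)}$, being injective on the relevant neighbourhood $\tau_n(\s,\ast)$, keeps their $\phi_n$-images distinct; a separate estimate via Lemma~\ref{lem_UHSC} shows the tails of the sequences stay small compared with this fixed gap, so $\phi(z)\neq\phi(w)$. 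Once $\phi|_{J_{(\s,\ast)}}$ is a continuous bijection onto $\overline{\Gamma(\s,\ast)}$, the description of $\overline{\Gamma(\s,\ast)}$ as a canonical ray with its endpoint is immediate from Theorem~\ref{thm_signed}.

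**Then surjectivity, $\phi(I(g)_\pm)=I(f)$, and the fibre bound.** Surjectivity onto $J(f)$: the image $\phi(J(g)_\pm)=\bigcup_{(\s,\ast)}\overline{\Gamma(\s,\ast)}$ by the previous paragraph, and this union contains $\mathcal S\supset I(f)$ by Theorem~\ref{thm_signed}; to get the whole of $J(f)$ I would use the compactification from Lemma~\ref{compactification} — $\phi$ extends continuously to $J(g)_\pm\cup\{\tilde\infty\}$ sending $\tilde\infty$ to $\infty$ on $\widehat{\C}$ by \eqref{limits} and Theorem~\ref{thm_CB}\ref{item:annulus} (which controls $|\theta(z)|$ vs $|z|$, hence $|\phi_0(x)|$, and then Lemma~\ref{lem_UHSC}\ref{itemA_cauchy} controls $|\phi(x)|$ vs $|\pi(x)|$) — so $\phi$ becomes a continuous map from a compact space, whence its image is compact, hence closed in $\widehat{\C}$; as it contains the dense subset $I(f)$ of $J(f)=\overline{I(f)}$ and is contained in $J(f)\cup\{\infty\}$, it equals $J(f)\cup\{\infty\}$, i.e.\ $\phi(J(g)_\pm)=J(f)$. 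For $\phi(I(g)_\pm)=I(f)$: the inclusion $\phi(I(g)_\pm)\subset I(f)$ follows since $\phi|_{I_{(\s,\ast)}}$ maps into $\Gamma(\s,\ast)\cap(\text{escaping part})$ using Theorem~\ref{thm_CB}\ref{item:annulus} ($\theta(X\cap I(g))\subset I(f)$) plus that $f^{-n}_{(\s,\ast)}$-preimages of escaping points escape; the reverse inclusion follows from $\mathcal S\supset I(f)$ together with the bijections $\phi|_{J_{(\s,\ast)}}$ and Observation~\ref{obs_inverse_CB}. Finally the uniform fibre bound $\#\phi^{-1}(z)<K$ for $z\in I(f)$: by injectivity on each $J_{(\s,\ast)}$, $\#\phi^{-1}(z)$ is at most the number of signed addresses $(\s,\ast)$ with $z\in\overline{\Gamma(\s,\ast)}$, i.e.\ $\#\Addr(z)_\pm$, and by \eqref{eq_singed_adddr} this equals $2\prod_{j\geq0}\deg(f,f^j(z))$, which is finite and — using bounded criticality on $J(f)$ (Definition~\ref{def_strongps}\ref{itema_defsps}) with local-degree bound $M$, and the bound $c$ on the number of critical points in any forward orbit (Definition~\ref{def_strongps}\ref{itemb_defsps}) — is at most $2M^{c}\eqdef K$.

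**The main obstacle** I expect to be the injectivity of $\phi|_{J_{(\s,\ast)}}$: everything else is either a clean limiting argument or a direct appeal to a cited result, but injectivity requires carefully combining the homeomorphism property of $\theta|_X$, the disjoint-type dynamics of $g$ on the Cantor bouquet $X$ (so that two points are eventually separated in the "brush" coordinate after finitely many iterations), and the local injectivity of the inverse branches $f^{-n}_{(\s,\ast)}$ on the neighbourhoods $\tau_n(\s,\ast)$, while simultaneously using the geometric contraction from Lemma~\ref{lem_UHSC} to guarantee that the infinite tail of the approximating sequence cannot re-merge the two orbits. Handling the non-escaping endpoints of hairs of $J(g)$ (those finitely many that are not in $I(g)$) needs a small separate argument, likely via continuity of $\phi$ and density of escaping points on each hair.
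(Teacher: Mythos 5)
Most of your architecture matches the paper's proof: uniform convergence of the $\phi_n$ via Lemma~\ref{lem_UHSC} and completeness of the orbifold metric, the semiconjugacy by passing to the limit in \eqref{commute1}, surjectivity via the one-point compactification of Lemma~\ref{compactification} (image compact, hence closed, containing the dense set $I(f)$ in $J(f)=\overline{I(f)}$), applied also to each $J_{(\s,\ast)}$ to upgrade \eqref{eq_phiJinGamma} to a bijection onto $\overline{\Gamma(\s,\ast)}$, and the fibre bound from \eqref{eq_singed_adddr} with Definition~\ref{def_strongps}\ref{itema_defsps},\ref{itemb_defsps}. But the step you yourself flag as the main obstacle --- injectivity of $\phi\vert_{J_{(\s,\ast)}}$ --- is where your argument has a genuine gap. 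Your plan is: separate $z,w$ at some finite level via $\theta\circ g^n$, note $f^{-n}_{(\s,\ast)}$ is injective, and then argue that ``the tails of the sequences stay small compared with this fixed gap.'' The gap is not fixed: the separation between $\phi_n(z)$ and $\phi_n(w)$ is the image of the separation between $\theta(g^n(\pi(z)))$ and $\theta(g^n(\pi(w)))$ under the $n$-fold inverse branch, which by Corollary~\ref{cor_uniform} contracts like $\Lambda^{-n}$ --- exactly the same rate as the tail bound $\mu/\Lambda^n$ in Lemma~\ref{lem_UHSC}\ref{itemB_cauchy}. So the comparison you invoke does not close, unless you prove that the $\Or$-separation of the forward images grows at least like $\Lambda^n$, which is essentially the statement you are trying to establish.

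The paper avoids any such estimate by a structural observation you are missing: on escaping points the sequence $\phi_m$ \emph{stabilizes}. If $\pi(x)\in\beta^n_\s$, i.e.\ $g^n(\pi(x))\in X_{\sigma^n(\s)}$ (Observation~\ref{obs_inverse_CB}), then since $\theta\circ g=f\circ\theta$ holds on the forward-invariant bouquet $X$ (Theorem~\ref{thm_CB}\ref{item:commute},\ref{item:imageBouquet}), the extra iterates and inverse branches cancel and $\phi_m(x)=f^{-n}_{(\s,\ast)}(\theta(g^n(\pi(x))))$ for every $m\geq n$; hence $\phi\vert_{\beta^n_\s\times\{\ast\}}=f^{-n}_{(\s,\ast)}\circ\theta\circ\tilde g^{\,n}$ is literally a finite composition of injections, and $\phi\colon I_{(\s,\ast)}\to\Gamma(\s,\ast)\cap I(f)$ is a bijection with no limiting argument at all. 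The at-most-one non-escaping endpoint $(e_\s,\ast)$ is then handled not by ``continuity plus density'' (which cannot by itself rule out its image colliding with an escaping image) but by the dichotomy that the distance bound $d_\Or(\phi(x),\pi(x))\leq 2\mu\Lambda/(\Lambda-1)$ forces $\phi$ to send escaping points to escaping points and non-escaping points to non-escaping points; this dichotomy is also what you need (and do not state) to justify that for $z\in I(f)$ every preimage lies in some $I_{(\s,\ast)}$, so that $\#\phi^{-1}(z)=\#\Addr(z)_\pm$ rather than a count over closures $\overline{\Gamma(\s,\ast)}$, and likewise to get $I(f)\subseteq\phi(I(g)_\pm)$ rather than only $I(f)\subseteq\phi(J(g)_\pm)$. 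With the stabilization observation and this dichotomy inserted, your outline becomes the paper's proof; without them, the injectivity step and the endpoint/fibre bookkeeping do not go through as written.
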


\begin{observation}\label{obs_orders_semi} We have implicitly stated in Theorem~\ref{thm_main} that $\phi$ establishes a one-to-one correspondence between $\Addr(g)_\pm$ and $\Addr(f)_\pm$, since with some abuse of notation, we have stated that for each $(\s, \ast)\in \Addr(g)_\pm$, $J_{(\s, \ast)}\subset J(g)_\pm$ is mapped to $\overline{\Gamma(\s, \ast)} \subset J(f)$ for $(\s, \ast) \in \Addr(f)_\pm$. Here, $\overline{\Gamma(\s, \ast)}$ denotes the closure of $\Gamma(\s, \ast)$ in $\C$. In particular, we are claiming that $\phi$ is an order-preserving continuous map.
\end{observation}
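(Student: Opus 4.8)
The plan is to obtain $\phi$ as the pointwise limit of the maps $\phi_n$ of Definition~\ref{def_phin} and then read every assertion off this description; the core of the work is the analysis of $\phi$ on each individual set $J_{(\s,\ast)}$.

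\emph{Construction of $\phi$.} By Lemma~\ref{lem_UHSC}\ref{itemB_cauchy}, for each $x$ the sequence $\{\phi_n(x)\}$ satisfies $\sum_n d_\Or(\phi_{n+1}(x),\phi_n(x))\le\sum_n\mu\Lambda^{-n}<\infty$, hence is $d_\Or$-Cauchy; since $(J(f),d_\Or)$ is complete and $\phi_n(x)\in\gamma^n_{\addr(x)}\subset J(f)$, the limit $\phi(x):=\lim_n\phi_n(x)$ exists in $J(f)$. The tail bound is uniform in $x$, so $\phi_n\to\phi$ uniformly for $d_\Or$; each $\phi_n$ is continuous (Proposition~\ref{new_prop_contphi_k}) and the $d_\Or$-topology agrees with the Euclidean one on $J(f)$, so $\phi$ is continuous. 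Letting $n\to\infty$ in \eqref{commute1} — pointwise convergence of $\phi_n$ at $\tilde g(x)$ on the left, continuity of $f$ on the right — gives $f\circ\phi=\phi\circ\tilde g$. Moreover $\phi$ extends continuously to $J(g)_\pm\cup\{\tilde\infty\}$ by $\tilde\infty\mapsto\infty$: by Lemma~\ref{compactification} (the space is sequential) it suffices that $\pi(x_k)\to\infty$ imply $\phi(x_k)\to\infty$, which holds because $\phi(x_k)$ stays within bounded $d_\Or$-distance of $\pi(x_k)$ by Lemma~\ref{lem_UHSC}\ref{itemA_cauchy},\ref{itemB_cauchy}, while the essential singularity lies at infinite distance for the orbifold metric.

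\emph{The restriction to $J_{(\s,\ast)}$.} Fix $(\s,\ast)\in\Addr(g)_\pm$. By Observation~\ref{obs_inverse_CB}, $J^g_\s$ is a single hair of $J(g)$ and $g\colon J^g_\s\to J^g_{\sigma(\s)}$ is a bijection, so $J_{(\s,\ast)}=J^g_\s\times\{\ast\}$ is homeomorphic to a closed half-line, $\tilde g^n$ maps it bijectively onto $J_{(\sigma^n(\s),\ast)}$, and by \eqref{eq_phin_gamman}, $\phi_n$ maps it \emph{onto} $\gamma^n_{(\s,\ast)}$; in particular $\phi(J_{(\s,\ast)})\subseteq\overline{\Gamma(\s,\ast)}$. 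For the reverse inclusion, take $w\in\gamma^N_{(\s,\ast)}\subset\Gamma(\s,\ast)$; nestedness of the $\gamma^M_{(\s,\ast)}$ provides $x_M\in J_{(\s,\ast)}$ with $\phi_M(x_M)=w$ for all $M\ge N$. In the compact space $J(g)_\pm\cup\{\tilde\infty\}$ pass to a convergent subsequence $x_{M_j}\to x_\ast$; since $d_\Or(\phi(x_{M_j}),\phi_{M_j}(x_{M_j}))\to 0$ we get $\phi(x_{M_j})\to w\ne\infty$, so $x_\ast\in J_{(\s,\ast)}$ and, by continuity, $\phi(x_\ast)=w$. Hence $\Gamma(\s,\ast)\subseteq\phi(J_{(\s,\ast)})$, and as $\phi(J_{(\s,\ast)})\cup\{\infty\}$ is compact, $\phi(J_{(\s,\ast)})$ is closed in $\C$; therefore $\phi(J_{(\s,\ast)})=\overline{\Gamma(\s,\ast)}$.

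\emph{Injectivity, landing and order.} If $\phi(z,\ast)=\phi(z',\ast)$ with $z,z'\in J^g_\s$, then applying $f^n$ and the functional equation yields $\phi(\tilde g^n(z,\ast))=\phi(\tilde g^n(z',\ast))$ for all $n$, and Lemma~\ref{lem_UHSC}\ref{itemA_cauchy},\ref{itemB_cauchy} with the triangle inequality produces a constant $\mu''$, independent of everything, with $d_\Or(g^nz,g^nz')\le\mu''$ for all $n$. If $z\ne z'$ then, a hair having a unique endpoint and all non-endpoints of $J(g)$ being escaping, at least one — hence both — of $z,z'$ escape; but distinct points of a hair have $g$-orbits that separate in the hyperbolic metric of a standard forward-invariant neighbourhood of $J(g)$ (lift geodesics through the conformally full tracts and use disjoint-type expansion), and on the escaping part of the orbit that metric is comparable to $d_\Or$, so $d_\Or(g^nz,g^nz')\to\infty$, a contradiction. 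Thus $\phi\colon J_{(\s,\ast)}\to\overline{\Gamma(\s,\ast)}$ is a bijection; its continuous extension $J_{(\s,\ast)}\cup\{\tilde\infty\}\to\overline{\Gamma(\s,\ast)}\cup\{\infty\}$ is a continuous bijection from a compact space to a Hausdorff one, hence a homeomorphism, so $\overline{\Gamma(\s,\ast)}\cup\{\infty\}$ is an arc with endpoints $\infty$ and a finite point $p$; that is, $\Gamma(\s,\ast)$ lands at $p$ and $\overline{\Gamma(\s,\ast)}$ is a canonical ray together with its endpoint. Order-preservation (Observation~\ref{obs_orders_semi}) follows because the initial configuration $\{\gamma^0_\s\}$ carries the cyclic order of $\theta(X)$ (Theorem~\ref{thm_CB}\ref{item:addr}, Proposition~\ref{prop_mapC}) and the inverse branches entering the $\phi_n$ are constant on intervals of signed addresses (Theorem~\ref{thm_inverse_hand}).

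\emph{Global statements.} By Theorem~\ref{thm_signed}, $I(f)\subset\bigcup_{(\s,\ast)}\Gamma(\s,\ast)\subseteq\phi(J(g)_\pm)$; since $\phi$ extends continuously to the compact $J(g)_\pm\cup\{\tilde\infty\}$, its image $\phi(J(g)_\pm)$ is closed, hence contains $\overline{I(f)}=J(f)$, so $\phi$ is onto $J(f)$. The same distance estimates give $\phi(x)\in I(f)$ iff $\pi(x)\in I(g)$ — a bounded subsequence of $\{g^n\pi(x)\}$ produces one of $\{f^n\phi(x)\}$, and conversely — whence $\phi(I(g)_\pm)=I(f)$. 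Finally, $\phi$ being a bijection on each $J_{(\ul\tau,\star)}$ gives $\#\phi^{-1}(z)=\#\{(\ul\tau,\star):z\in\overline{\Gamma(\ul\tau,\star)}\}$, which for $z\in I(f)$ is $\#\Addr(z)_\pm=2\prod_{j\ge0}\deg(f,f^j(z))$ — bounded by $K:=2M^{c}$ via Definition~\ref{def_strongps}\ref{itema_defsps},\ref{itemb_defsps} — plus the number of canonical rays landing at $z$, which one checks is uniformly bounded (in particular an escaping landing point lies on only finitely many canonical rays). The step I expect to be the genuine obstacle is the injectivity on $J_{(\s,\ast)}$: turning the pointwise orbifold/hyperbolic expansion into the statement that the orbits of two distinct hair points separate in $d_\Or$, carefully matching the bounded exceptional set $B$ of Theorem~\ref{thm_CB} with the region where \eqref{eq_exp_lambda} is available; the uniform bound on the number of rays landing at an escaping point is a secondary technical point.
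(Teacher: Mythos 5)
Your overall scheme (uniform Cauchy limit of the $\phi_n$, the distance estimate forcing $\phi(x_k)\to\infty$ iff $\pi(x_k)\to\infty$, the one-point compactification to get closed images and hence surjectivity) matches the paper. The genuine gap is exactly where you predicted it: injectivity of $\phi$ on a fixed $J_{(\s,\ast)}$, which is the content underlying the claimed one-to-one correspondence between $\Addr(g)_\pm$ and $\Addr(f)_\pm$. Your argument assumes that two distinct points $z\neq z'$ on the same hair of $J(g)$ have orbits with $d_\Or(g^nz,g^nz')\to\infty$, deduced from separation in the hyperbolic metric of a forward-invariant neighbourhood of $J(g)$ ``comparable to $d_\Or$''. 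That comparison goes the wrong way: the orbifold $\Or$ lives on essentially all of $\C$ minus a compact set (Theorem \ref{thm_orbifolds}\ref{item_a_assocorb}), hence its metric is dominated by, not dominating, the hyperbolic metric of the much smaller $g$-invariant tract neighbourhood; divergence in the latter gives no lower bound for $d_\Or$. Moreover, near infinity the $\Or$-density decays roughly like $1/(|w|\log|w|)$, so two escaping points can drift apart Euclideanly while remaining at bounded $\Or$-distance unless their moduli separate superexponentially relative to each other --- and for general maps in $\CB$ no head-start/finite-order type separation along a hair is available. So the contradiction you want cannot be extracted from boundedness of $d_\Or(g^nz,g^nz')$ by the route you sketch.

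The paper avoids this entirely by a much more elementary observation that your proposal misses: on the escaping part of a hair the approximating sequence \emph{stabilizes}. For $x\in\beta^n_\s\times\{\ast\}$ one has, for all $m\geq n$, $\phi_m(x)=f^{-n}_{(\s,\ast)}\bigl(\theta(\tilde g^{\,n}(x))\bigr)$, so $\phi\vert_{I_{(\s,\ast)}}=f^{-n}_{(\s,\ast)}\circ\theta\circ\tilde g^{\,n}$ there, an explicit composition of bijections (Observations \ref{obs_inverse_CB} and \ref{obs_chain_inverse}, Theorem \ref{thm_CB}\ref{item:homeomX}); this is the paper's Claim that $\phi\colon I_{(\s,\ast)}\to\Gamma(\s,\ast)\cap I(f)$ is a bijection. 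Injectivity on all of $J_{(\s,\ast)}$ then follows because, by \eqref{eq_e_s}, the only possibly non-escaping point of $J_{(\s,\ast)}$ is the endpoint $(e_\s,\ast)$, whose image lies in $J(f)\setminus I(f)$ by \eqref{eq_quasisurjective} and so cannot collide with the image of the escaping part; no metric separation of orbits is needed. Your image identification $\phi(J_{(\s,\ast)})=\overline{\Gamma(\s,\ast)}$ via subsequences in the compactification is fine (the paper gets $\Gamma(\s,\ast)\subset\phi(J_{(\s,\ast)})\subset\overline{\Gamma(\s,\ast)}$ directly from the Claim plus closedness of the image), and your explanation of order-preservation via Theorem \ref{thm_CB}\ref{item:addr} and Theorem \ref{thm_inverse_hand} is the intended one. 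A minor further slip: for $z\in I(f)$ there is no extra contribution to $\#\phi^{-1}(z)$ from landing points, since by \eqref{eq_quasisurjective} non-escaping model points map into $J(f)\setminus I(f)$; thus $\#\phi^{-1}(z)=\#\Addr(z)_\pm$ exactly, bounded via Definition \ref{def_strongps}\ref{itema_defsps},\ref{itemb_defsps}.
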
	

\begin{proof}[Proof of Theorem~\ref{thm_main}]
Since, by Observation \ref{obs_unique_model}, any two models for $f$ are conjugate, we may assume without loss of generality that $g$ is the disjoint type function fixed in \ref{dis_setting_semi}. Then, $J(g)_\pm$ and $\tilde{g}$ are also fixed in \ref{dis_setting_semi}. Let $\{\phi_n\}_{n\geq 0}$ be the sequence of functions given by Definition \ref{def_phin} following \ref{dis_setting_semi}. By Proposition~\ref{new_prop_contphi_k} and Lemma~\ref{lem_UHSC}, $\{\phi_n\}_{n\geq 0}$ is a uniformly Cauchy sequence of continuous functions. Since the orbifold metric in $\Or$ is complete, they converge uniformly to a continuous limit function
$\phi :J(g)_{\pm} \rightarrow\Or,$ which by the functional equation (\ref{commute1}) satisfies
\begin{equation} \label{commutative}
\phi\circ \tilde{g}= f \circ \phi.
\end{equation}
\noindent By Lemma~\ref{lem_UHSC}, there are constants $\mu, \Lambda>1$ such that
\begin{equation*}\label{eq_sum}
\begin{split}
d_\Or(\phi(x), \pi(x)) &\leq d_\Or(\phi(x),\phi_0(x))+ d_\Or(\phi_0(x), \pi(x)) \\ & \leq \sum_{k=0} ^{\infty} d_\Or(\phi_{k+1}(x),\phi_k(x)) +\mu 
\leq 2\sum_{j=0}^{\infty}\frac{\mu}{\Lambda^{j}}=\frac{2\mu\Lambda}{\Lambda-1}.
\end{split}
\end{equation*}

\noindent This means for sequences $\lbrace x_n \rbrace_{n\in \N} \subset J(g)_{\pm}$ that as $n\rightarrow \infty$,
\begin{equation}\label{corrId}
\phi(x_n)\rightarrow\infty \quad \text{ if and only if } \quad \pi(x_n)\rightarrow\infty.
\end{equation}
In particular, this holds when $\lbrace x_n \rbrace_{n\in \N}=\lbrace \tilde{g}^{n}(x) \rbrace_{n\in \N}$ is the orbit of some $x \in I(g)_\pm$. Using that, by \eqref{commutative}, $\phi(\tilde{g}^{n}(x))=f^{n}(\phi(x))$, we have that $x\in I(g)_{\pm}$ if and only if $\phi(x)\in I(f)$. Equivalently, 
\begin{equation}\label{eq_quasisurjective}
\phi(I(g)_\pm) \subseteq I(f) \quad \text{ and } \quad \phi(J(f)_\pm \setminus I(g)_\pm) \subseteq J(f)\setminus I(f).
\end{equation}

Recall from Observation \ref{obs_inverse_CB} that since $g$ is a disjoint type function whose Julia set is a Cantor bouquet, each of the sets $J_\s$ from \eqref{eq_Js} is a dynamic ray together with its endpoint, and hence, contains at most one non-escaping point, namely, its endpoint $e_\s$. Thus, for each $\ast \in \{-,+\}$, 
\begin{equation}\label{eq_e_s}
J_{(\s, \ast)} \setminus I_{(\s, \ast)} \subseteq \{(e_\s, \ast)\}.
\end{equation}

\begin{Claim}\label{claim_main}
For each $(\s,\ast)\in \Addr(g)_\pm$, $\phi\colon I_{(\s, \ast)}\rightarrow \Gamma(\s, \ast)\cap I(f)$ is a bijection.
\end{Claim}
\begin{subproof}
First, recall that by Theorem \ref{thm_CB}, $J(g)$ contains a Cantor bouquet $X$ such that $I(g)\subset \Orb^-(X)$, and so, by Observation \ref{obs_betas}, we have that the set $I_\s$ can be written as a nested sequence of ray tails $\beta^n_\s$, that are $n$-th preimages of the hairs $X_{\sigma^n(\s)}$. In particular, for each $n\in \N$, 
\begin{equation} \label{eq_new1}
g^n(\beta^n_\s)=X_{\sigma^n(\s)}\subset X
\end{equation}
and we can write
\begin{equation*}
\beta^n_{(\s, \ast)}\defeq \beta^n_\s\times \{\ast\} \quad \text{ and } \quad I_{(\s,\ast)}=\bigcup_{n\geq 0} \beta^n_{(\s, \ast)}.
\end{equation*}
Moreover, for each $n\in \N$, let $\hat{\gamma}^n_{(\s, \ast)}\defeq\gamma^n_{(\s, \ast)}\cap I(f)$. Then, by Theorem~\ref{thm_signed}, $\gamma^n_{(\s, \ast)}\setminus \hat{\gamma}^n_{(\s, \ast)}$ is at most the endpoint of $\gamma^n_{(\s, \ast)}$ and \[ \Gamma(\s,\ast)\cap I(f)=\bigcup_{n\geq 0}\hat{\gamma}^n_{(\s,\ast)}.\]

We claim that for each $n\in \N$, $\phi\vert_{\beta^n_{(\s, \ast)}} \equiv \phi_n\vert_{\beta^n_{(\s, \ast)}}$ is a bijection to $\hat{\gamma}^n_{(\s, \ast)}$. Indeed, for all $m\geq n$, by \eqref{eq_expdef}, \eqref{eq_new1} together with Theorem~\ref{thm_CB}\ref{item:commute} and \eqref{eq_initial_semi},
\begin{equation*}
\begin{split}
\phi_{m}(\beta^n_{(\s, \ast)}) &= f^{-m}_{(\s, \ast)}\circ \theta \circ \pi\circ \tilde{g}^{m} (\beta^n_{(\s, \ast)})= f^{-n}_{(\s, \ast)}\circ f^{n-m}\circ \theta \circ g^{m-n} (g^n(\beta^n_{\s}))\\
&=f^{-n}_{(\s, \ast)}\circ\theta\circ g^{n}(\beta^n_{\s})=f^{-n}_{(\s, \ast)}(\hat{\gamma}^0_{(\sigma^n(\s),\ast)})=\hat{\gamma}^n_{(\s, \ast)}.
\end{split}
\end{equation*}
Since this holds for all $m\geq n$, we have shown that 
$$\phi\vert_{\beta^n_{(\s, \ast)}}\equiv \phi_n\vert_{\beta^n_{(\s, \ast)}}=f^{-n}_{(\s, \ast)}\circ\theta\circ g^{n}\vert_{\beta^n_{\s}},$$ which by Theorem~\ref{thm_CB}\ref{item:homeomX} and Observations \ref{obs_betas} and \ref{obs_chain_inverse}, is a composition of bijections.
\end{subproof}
The claim implies that $\phi(J(g)_\pm)= \bigcup_{(\s, \ast)\in \Addr(f)_\pm}\Gamma(\s, \ast) \cap I(f)\supset I(f)$, see \eqref{eq_thmcanonical}. This together with \eqref{eq_quasisurjective} leads to $\phi(I(g)_\pm)=I(f)$. In addition, for each $(\s, \ast)\in \Addr(g)_\pm$, if $(e_\s, \ast)\in J(g)_\pm \setminus I(g)_\pm$, then by \eqref{eq_quasisurjective} and \eqref{eq_e_s}, $\phi(e, \ast)\in J(f) \setminus I(f)$, and so by the previous claim and continuity of $\phi$, $\phi\vert_{ J_{(\s, \ast)}}$ is injective and 
\begin{equation}\label{eq_phiJinGamma}
\Gamma(\s, \ast)\subset \phi(J_{(\s,\ast)})\subset \overline{\Gamma(\s, \ast)}.
\end{equation}
In order to prove surjectivity of $\phi$, let $J(g)_{\pm}\cup\lbrace\tilde{\infty} \rbrace$ be the one point compactification of $J(g)_{\pm}$ provided by Lemma~\ref{compactification}, and denote by $J(f) \cup \lbrace \infty \rbrace$ the compactification of $J(f)$ as a subset of the Riemann sphere $\widehat{\C}$. By Lemma~\ref{compactification} and \eqref{corrId}, given a sequence $\lbrace x_n \rbrace_{n\in \N} \subset J(g)_{\pm}\cup\lbrace\tilde{\infty}\rbrace$, we have 
\begin{equation}\label{eq_iff}
 \lim_{n \rightarrow \infty} x_n=\tilde{\infty} \quad \Longleftrightarrow \quad \lim_{n \rightarrow \infty}\pi(x_n)=\infty \quad \Longleftrightarrow \quad \lim_{n \rightarrow \infty}\phi(x_n)=\infty. 
\end{equation}

Since by Lemma~\ref{compactification} $J(g)_{\pm}\cup\lbrace\tilde{\infty} \rbrace$ is a sequential space, and so is $\widehat{\C}$, the notions of continuity and sequential continuity for functions between these spaces are equivalent. Therefore, by \eqref{eq_iff}, we can extend $\phi$ to a continuous map $\hat{\phi}: J(g)_{\pm} \cup \lbrace \tilde{\infty} \rbrace \rightarrow J(f) \cup \lbrace \infty \rbrace$ by defining $\hat{\phi}(\tilde{\infty})=\infty$. By continuity of $\hat{\phi}$, we have that $\hat{\phi}\left(J(g)_{\pm}\cup\lbrace\tilde{\infty} \rbrace\right)$ is compact. By definition of $\hat{\phi}$, it must be the case that $\hat{\phi}(J(g)_{\pm})=\phi(J(g)_{\pm})$, and by removing $\lbrace\infty\rbrace$ from the codomain of $\hat{\phi}$, we can conclude that $\phi(J(g)_{\pm})$ is (relatively) closed in $J(f)$ with respect to the original topologies. By this and since the Julia set is the closure of the escaping set for any function in $\B$, \cite{eremenkoclassB}, we have
\begin{equation*}
I(f) =\phi(I(g)_\pm) \subset \phi (J(g)_\pm) \subset J(f)= \overline{I(f)}.
\end{equation*}
Consequently, $\phi (J(g)_\pm)$ must be equal to $J(f)$, showing that $\phi$ is surjective. Moreover, arguing exactly the same way, we can see that for each $(\s, \ast)\in \Addr(f)_\pm$, the set $\phi(J_{(\s, \ast)})$ is closed in $J(f)$, and hence, by \eqref{eq_phiJinGamma}, $\phi\colon J_{(\s, \ast)}\rightarrow\overline{\Gamma(\s, \ast)}$ is a bijection. In particular, $\overline{\Gamma(\s, \ast)}$ is a canonical ray together with its endpoint.

Finally, each $z\in I(f)$ belongs to $\#\Addr(z)_\pm=\prod^{\infty}_{j=0}\deg(f,f^{j}(z))$ canonical rays; see Definition \ref{def_addrpm}. By the claim in this proof, for each $z\in I(f)$, $\#\phi^{-1}(z)=\#\Addr(z)_\pm$. Moreover, since $f$ is strongly postcritically separated, by items \ref{itema_defsps} and \ref{itemb_defsps} in Definition~\ref{def_strongps}, there exist constants $N,c\in \N$ such that for each $z\in J(f)$, $\#(\Orb^+(z)\cap \Crit(f)) \leq c$ and $\deg(f,w)\leq N$ for all $w\in \Crit(f)$. Hence, letting $K\defeq N^c$, the claim in the statement follows.
\end{proof}

\begin{proof}[Proof of Theorem~\ref{thm_main_intro}]
It is a direct consequence of Theorem~\ref{thm_main}.
\end{proof}	

\begin{proof}[Proof of Corollary~\ref{cor_intro}]
Note that $f\in \CB$ is in particular criniferous, see Theorem~\ref{thm_CB}, and since it is also strongly postcritically separated, it has no asymptotic values in its Julia set. Hence, by Theorem~\ref{thm_signed}, proving that all canonical rays of $f$ land suffices to conclude that all its dynamic rays land. Since by Theorem~\ref{thm_main}, for each $(\s,\ast)\in \Addr(f)_\pm$, $ \overline{\Gamma(\s,\ast)}$ is a canonical ray together with its landing point, the corollary follows.
\end{proof}

\begin{proof}[Proofs of Theorems \ref{thm_1intro} and \ref{thm_main_intro1}] If $f$ is a finite composition of functions of finite order in $\B$, then $f\in \CB$, see \cite[Proposition~6]{mio_newCB}. Moreover, if $S(f)$ is a finite collection of critical values in $I(f)$, $f$ has bounded criticality on $J(f)$, and $\vert w-z\vert \geq \epsilon\max\{\vert z \vert, \vert w \vert\}$ for some $\epsilon>0$ and all distinct $z,w \in P(f)$, then $f$ is strongly postcritically separated, see Definition \ref{def_strongps}. Thus, Theorems \ref{thm_1intro} and \ref{thm_main_intro1} respectively follow from Corollary~\ref{cor_intro} and Theorem~\ref{thm_main_intro}.
\end{proof}
\bibliographystyle{alpha}
\bibliography{biblioComplex}
\end{document}